\numberwithin{equation}{section}
\newtheorem{theorem}{Theorem:}[section]
\newtheorem{definition}[theorem]{Definition}
\newtheorem{corollary}[theorem]{Corollary}
\newtheorem{lemma}[theorem]{Lemma}
\newtheorem{remark}[theorem]{Remark}
\newtheorem{example}[theorem]{Example}
\begin{document}
\title{\Large \bfseries {\color{black}{Rigorous}} Perturbation Bounds for the ${\color {black}{QX}}$ Decomposition for Centrosymmetric Matrices}
\author{Aamir Farooq$^{1,2,*}$, Rewayat Khan$^{3} $, Uzma Rani$^{4}$, M. Tariq Rahim$^{4}$}
\date{\small {$^{1}$Department of Mathematics, Zhejiang Normal University, Jinhua 321004, PR China; (aamirf88@yahoo.com)\\
$^{2}$Faculty of Mechatronics, Informatics and Interdisciplinary Studies, Institute of Information Technologies and Electronics, Technical University of Liberec, Liberec, Czech Republic\\
$^{3}$Faculty of Engineering and Natural Sciences, Sabanci University, ˙Istanbul, Turkey; (rewayat.khan@sabanciuniv.edu) \\
$^{4}$Department of Mathematics, Abbottabad University of Science and Technology, Abbottabad, Pakistan; (raniuzma205@gmail.com, tariqsms@gmail.com)}}  \maketitle
{\raggedleft\bfseries{\normalsize Abstract}}\\

\footnotetext{$^{*}$Corresponding author. Email address: \href{}{aamirf88@yahoo.com and aamirf@zjnu.edu.cn}}

Konrad Burnik suggests a structure-preserving ${\color {black}{QR}}$ factorization for centrosymmetric matrices, known as ${\color {black}{QX}}$ factorization. In this article, {\color{black} we obtain the} explicit expressions for {\color{black}{rigorous}} perturbation bounds of the ${\color {black}{QX}}$ factorization when the original matrix is perturbed, either norm-wise or component-wise. First, using the matrix-equation approach, weak {\color{black}{rigorous}} perturbation bounds are derived. Then, strong {\color{black}{rigorous}} perturbation bounds are obtained by combining the modified matrix-vector equation approach with the strategy for the Lyapunov majorant function and the Banach fixed-point theorem. The mixed and component-wise condition numbers and their upper bounds are also explicitly expressed. Numerical tests illustrate the validity of the obtained results.
\vspace{.3cm}

{\raggedleft \em Keywords:} Structure-preserving ${\color {black}{QR}}$ factorization; Centrosymmetric matrix; {\color{black} {Rigorous}} perturbation
bounds; Mixed and Component-wise condition numbers; Lyapunov majorant function; Banach fixed point
theorem.

\section{Introduction}\label{sec.1}
\vspace{-2pt}
\vspace{-2pt}
Let ${\mathbb {R}}^{m \times n}$ be the set of $m \times n$ real matrices, ${\mathbb {R}}_{r}^{m \times n}$ the set of rank-$r$ matrices in ${\mathbb {R}}^{m \times n}$, and $I_{r}$ the identity matrix of order $r$. A centrosymmetric matrix \( A = (a_{\alpha,\beta}) \in \mathbb{R}^{m\times n} \) is such that \( a_{\alpha{\color {blue}{,}}\beta} = a_{{m-\alpha+1},{n-\beta+1}} \), where \( \alpha = 1:m \) and \( \beta = 1:n \), i.e., $ {\color{black}{R_{m}}}A{\color{black}{R_{n}}} = A $, where $ {\color{black}{R_{n}}} $ is the reverse identity matrix of order \( n \), given by

\begin{align*}
  {\color{black}{R_{n}}} = \left[
    \begin{array}{ccccc}
       &  &  &  & 1 \\
       &  & \iddots &   \\
      1 &  &  &  &
    \end{array}
  \right].
\end{align*}

Every centrosymmetric matrix can be factorized as
\begin{align}\label{1.1}
A ={\color{black}{Q_{0}}}{\color{black}{X_{0}}},
\end{align}
where ${\color{black}{Q_{0} = ({q^{0}} }}\! \!_{\alpha,\beta}) \in \mathbb {R}^{m \times m}$ is a perplectic orthogonal matrix, satisfying ${\color{black}{Q_{0}}}^{T} {\color{black}{R_{m}}}{\color{black}{Q_{0}}}= {\color{black}{R_{m}}}$ and $ {\color{black}{Q_{0}}}^{T} {\color{black}{Q_{0}}} = I_{m}, $ and ${\color{black}{X_{0}}} = ({x^{0}}\! \!_{\alpha,\beta}) \in\mathbb {R}^{ m \times n}$  is a double-cone centrosymmetric matrix (see \cite{burnik2015structure}). The name given to this decomposition is structure-preserving ${\color {black}{QR}}$ \cite{burnik2015structure}, or ${\color {black}{QX}}$ decomposition \cite{steele2018qx}. Burnik \cite{burnik2015structure} demonstrated this decomposition by employing a Householder-type algorithm based on perplectic orthogonal block reflectors. Steele et al. \cite{steele2018qx} also validated this by utilizing standard $QR$ factorizations on two matrices, each approximately half the size of the matrix $A$. The thin ${\color {black}{QX}}$ decomposition is also discussed in \cite{steele2018qx}:

\begin{align}\label{1.2}
    A = {\color{black}{QX}},
\end{align}
where ${\color{black} {Q}} \in \mathbb{R}^{m \times n}$ is a column perplectic orthogonal matrix defined by ${\color{black} {Q}}^{T}{\color{black}{R_{m}}}{\color{black} {Q}} = {\color{black}{R_{n}}}$ and ${\color{black} {Q}}^{T}{\color{black} {Q}} = I_{n}$. The matrix ${\color{black} {X}} = (x_{\alpha,\beta}) \in \mathbb{R}^{n \times n}$ is a square double-cone centrosymmetric matrix, which we refer to as an "\textit{${\color{black} {X}}$}"-type matrix \cite{lv2022perturbation}. It is worth noting that both ${\color{black} {Q}}_{0}$ and ${\color{black} {Q}}$ are centrosymmetric matrices. When $n$ is even, one can obtain the factorization \eqref{1.2} from \eqref{1.1} by eliminating the middle $m-n$ zero rows of ${\color{black} {X}}_{0}$ and the corresponding middle columns of ${\color{black} {Q}}_{0}$. However, when $n$ is odd, an additional transformation is required. For insights into the uniqueness of this transformation, and its implementation, refer to \cite{lv2022perturbation}. For simplicity, we always refer to the thin ${\color {black}{QX}}$decomposition \eqref{1.2} when discussing the ${\color {black}{QX}}$decomposition below. \\

Centrosymmetric matrices hold significant importance across diverse fields such as finite Markov processes, signal interpolation, pattern recognition, wavelet analysis, and a broad spectrum of applications in physics, engineering, and digital processing \cite{iosifescu2014finite,daubechies1992ten,datta1986some,hanna2003centrosymmetric,datta1989reducibility,li1999computationally}. The ${\color {black}{QX}}$ decomposition for centrosymmetric matrices is applicable for solving centrosymmetric systems of linear equations \cite{burnik2015structure}, as well as for designing practical structure-preserving algorithms. For example, a structure-preserving ${\color {black}{QX}}$ algorithm can be developed using the ${\color {black}{QX}}$ decomposition. The method presented in \cite{mackey2005structure} is a structure-preserving ${\color {black}{QX}}$algorithm designed to solve the eigenvalue problem of symmetric persymmetric matrices. Regarding centrosymmetric matrices, some scholars have studied their basic properties, inverse and pseudoinverse, eigenvalue problems, and various inverse problems (see, e.g., \cite{andrew1998classroom, weaver1985centrosymmetric, li2011numerical, bai2005inverse, zhou2003least}, and the references therein). Although computations involved in the ${\color {black}{QX}}$ factorization are generally sensitive, perturbation bounds and condition numbers can be utilized to estimate the accuracy of simulations. Recently, \cite{lv2022perturbation} presented a perturbation analysis for the ${\color {black}{QX}}$ decomposition of centrosymmetric matrices, deriving only the first-order norm-wise perturbation bounds for this decomposition.\\
\vspace{-2pt}

{\color{black} {Rigorous}} perturbation bounds are studied because first-order bounds may be inaccurate as they neglect higher-order terms. We continue to study {\color{black} {rigorous}} perturbation bounds, and the mixed and component-wise condition numbers for this decomposition. This paper will primarily focus on deriving {\color{black} {rigorous}} perturbation bounds and analyzing the mixed and component-wise condition numbers. Condition numbers play an important role in estimating the forward errors of algorithms when the original system becomes ill-conditioned. They are also used to predict the convergence of iterative methods. For the ${\color {black}{QX}}$ factorization, condition numbers must be investigated to evaluate the worst-case sensitivity of factors ${\color{black} {Q}}$ and ${\color{black} {X}}$ when the original matrix $A$ is perturbed.\\

The structure of this paper is as follows: In Section \ref{sec.2}, we explore {\color{black} rigorous} perturbation bounds, focusing on both norm-wise and component-wise perturbations for the ${\color {black}{QX}}$ factorization of centrosymmetric matrices. Section \ref{sec.3} provides explicit expressions for two types of condition numbers: mixed, and component-wise. These condition numbers are examined because norm-wise condition numbers often overestimate the problem by failing to capture the sparsity, or scaling structures, of coefficient matrices. Section \ref{sec.4} presents numerical experiments that validate our theoretical findings. Before proceeding to subsequent sections, we outline relevant notations and preliminaries, concluding in Section \ref{sec.5}. Most notations and preliminaries are based on references \cite{chang1997pertubation, li2016new, li2017note, li2015improved}.
\\
\vspace{-2pt}

For a given matrix $\mathrm{A} = (a_{\alpha\beta}) \in \mathbb{R}^{m \times n}$, the condition number of $\mathrm{A}$ is defined as $\kappa_{2} = \|\mathrm{A}\|_{2}\|\mathrm{A}^{\dag}\|_{2}$. Here, $\left\| \mathrm{A} \right\|_2$ and $\left\|\mathrm{A} \right\|_F$ denote the spectral norm and Frobenius norm of $\mathrm{A}$, respectively. For these two matrix norms, the following inequalities hold (see \cite[pp. 80]{chang1997pertubation}):
\begin{eqnarray}\label{1.3}
\left\|\mathrm{UVW}\right\|_2 \leq \left\| \mathrm{U} \right\|_2\left\| \mathrm{V} \right\|_2\left\| \mathrm{W} \right\|_2,\quad \left\| \mathrm{UVW} \right\|_F \leq \left\| \mathrm{U} \right\|_2 \left\| \mathrm{V} \right\|_F \left\| \mathrm{W} \right\|_2,
\end{eqnarray}
which are valid only when the matrices $\mathrm{U,V,}$ and $\mathrm{W}$ are conformable for multiplication.
For a given matrix, $C=[\textsl{c}_{1},\textsl{c}_{2},...,\textsl{c}_{p}]=(\textsl{c}_{\alpha\beta})\in\mathbb{R}^{p\times p}$, the vector of the first $\alpha$ elements of $\textsl{c}_{\beta}$ is denoted by $c_{\beta}^{^{(\alpha)}}$. Here we use the same operators as in \cite{chang1997pertubation}, i.e.,
\begin{eqnarray*}{\rm {\color{black}{up}}}\left(C \right) := \left[\begin{array}{cccc}
\frac{1}{2}\textsl{c}_{11} & \textsl{c}_{12} & \cdots & \textsl{c}_{1p}\\
0 & \frac{1}{2}\textsl{c}_{22} & \cdots & \textsl{c}_{2p}\\
\vdots & \vdots & \ddots & \vdots \\
0 & 0 & \cdots & \frac{1}{2}\textsl{c}_{pp}
\end{array} \right],~~
{\rm {\color{black}{ut}}}\left(C \right) := \left[ {\begin{array}{*{20}{c}}
{{\textsl{c}_{11}}}&{{\textsl{c}_{12}}}& \cdots &{{\textsl{c}_{1p}}}\\
0&{{\textsl{c}_{22}}}& \cdots &{{\textsl{c}_{2p}}}\\
\vdots&\vdots& \ddots & \vdots \\
0&0&\cdots&{{\textsl{c}_{pp}}}
\end{array}} \right],
\end{eqnarray*}
\begin{eqnarray*}
{\rm {\color{black}{vecu}}}\left( C \right) := \left[
                                     \begin{array}{c}
                                       \textsl{c}_{1}^{(1)} \\
                                       \textsl{c}_{2}^{(2)}\\
                                       \vdots \\
                                       \textsl{c}_{p}^{(p)} \\
                                     \end{array}\right] \in \mathbb{R}^{(p(p+1))/2}, \quad
{\rm {\color{black}{vec}}}\left( C \right):=\left[\begin{array}{c}
                                       \textsl{c}_{1} \\
                                        \textsl{c}_{2} \\
                                       \vdots \\
                                      \textsl{ c}_{p} \\
\end{array}\right] \in \mathbb{R}^{p^2},  \quad {\rm {\color{black}{low}}}(C)= C - {\rm {\color{black}{up}}}(C)={\rm {\color{black}{up}}}(C^{T})^{T},
\end{eqnarray*}
where ${\rm {\color{black}{vec}}}(C)$ is defined as the vector obtained by stacking all columns of the matrix $C$ into a vector. For any centrosymmetric matrix $ C= (\lambda_{\alpha,\beta}) \in \mathbb {R} ^{n \times n}$ and $ n=2l$, we can write it in the following form
\begin{align*}
& C = [\lambda_{1}, \lambda_{2}, . . . , \lambda_{l}, R_{n}\lambda_{l},. . .,  R_{n}\lambda_{2}, R_{n}\lambda_{1}]\\
&=\left[ \begin{array}{cccccccc}
         \lambda_{1,1}&\lambda_{1,2}&\cdots& \lambda_{1,l}& \lambda_{n,l} & \cdots& \lambda_{n,2} & \lambda_{n,1} \\
           \lambda_{2,1}&\lambda_{2,2}&\cdots& \lambda_{2,l}& \lambda_{n-1,l} & \cdots& \lambda_{n-1,2} & \lambda_{n-1,1} \\
           \vdots &\vdots &  &\vdots &\vdots &  &\vdots &\vdots \\
           \lambda_{l,1}&\lambda_{l,2}&\cdots& \lambda_{l,l}& \lambda_{l+1,l} & \cdots& \lambda_{l+1,2} & \lambda_{l+1,1} \\
           \lambda_{l+1,1}&\lambda_{l+1,2}&\cdots& \lambda_{l+1,l}&\lambda_{l,l} & \cdots& \lambda_{l,2} & \lambda_{l,1} \\
           \vdots &\vdots &  &\vdots &\vdots &  &\vdots &\vdots \\
           \lambda_{1,1}&\lambda_{1,2}&\cdots& \lambda_{1,l}& \lambda_{n,l} & \cdots& \lambda_{2,2} & \lambda_{2,1} \\
           \lambda_{1,1}&\lambda_{1,2}&\cdots& \lambda_{1,l}& \lambda_{n,l} & \cdots& \lambda_{1,2} & \lambda_{1,1}
         \end{array}
\right].
\end{align*}
Here $\lambda_{\beta} ^{\alpha}$ denotes the vector formed by consecutively assembling the leading $\alpha$ and the last $\alpha$ elements
of $\lambda_{\beta}^{(\alpha)}$. Additionally, we define
\begin{align*}
{\rm {\color{black}{upx}}}\left(C \right) :=\left[ \begin{array}{cccccccc}
         \frac{1}{2}\lambda_{1,1}&\lambda_{1,2}&\cdots& \lambda_{1,l}& \lambda_{n,l} & \cdots& \lambda_{n,2} & \frac{1}{2}\lambda_{n,l} \\
           &\frac{1}{2}\lambda_{2,2}&\cdots& \lambda_{2,l}& \lambda_{n-1,l} & \cdots& \frac{1}{2}\lambda_{n-1,2} & \\
            & &\ddots  &\vdots &\vdots &\iddots  & & \\
           & & &\frac{1}{2} \lambda_{l,l}& \frac{1}{2}\lambda_{l+1,l} & &  &  \\
          & & &\frac{1}{2} \lambda_{l+1,l}&\frac{1}{2} \lambda_{l,l} & &  &  \\
            & & \iddots &\vdots &\vdots &\ddots & & \\
           &\frac{1}{2}\lambda_{1,2}&\cdots& \lambda_{1,l}& \lambda_{n,l} & \cdots& \frac{1}{2}\lambda_{2,2} &  \\
           \frac{1}{2}\lambda_{1,1}&\lambda_{1,2}&\cdots& \lambda_{1,l}& \lambda_{n,l} & \cdots& \lambda_{1,2} &\frac{1}{2} \lambda_{1,1}
         \end{array}\right],
\end{align*}
\begin{align*}
{\rm {\color{black}{utx}}}\left( C \right) :=\left[ \begin{array}{cccccccc}
         \lambda_{1,1}&\lambda_{1,2}&\cdots& \lambda_{1,l}& \lambda{n,l} & \cdots&\lambda_{n,2} &\lambda_{n,1} \\
           &\lambda_{2,2}&\cdots&\lambda_{2,l}& \lambda_{n-1,l} & \cdots&\lambda_{n-1,2} & \\
            & &\ddots  &\vdots &\vdots &\iddots  & & \\
           & & &\lambda_{l,l}& \lambda_{l+1,l} & &  &  \\
          & & & \lambda_{l+1,l}&\lambda_{l,l} & &  &  \\
            & & \iddots &\vdots &\vdots &\ddots & & \\
           &\lambda_{1,2}&\cdots& \lambda_{1,l}&\lambda_{n,l} & \cdots& \lambda_{2,2} &  \\
           \lambda_{1,1}&\lambda_{1,2}&\cdots& \lambda_{1,l}& \lambda_{n,l} & \cdots& \lambda_{1,2} &\lambda_{1,1}
         \end{array}
\right],
\end{align*}
\begin{align*}
{{\color{black} {X}}} =\left[ \begin{array}{c}
                   \lambda_{1}^{(1)} \\
                  \lambda_{2}^{(2)} \\
                   \vdots\\
                  \lambda_{l}^{(l)}
                 \end{array}
                 \right], \,\,\,\,{\color{black}{ \rm xvec}}(\textit{C})= \left[
                              \begin{array}{c}
                                y \\
                                R_{l(l+1)}y \\
                              \end{array}
                            \right]\in \mathbb{R}^{\tau_{1}},
 \end{align*}
 and ${\rm {\color{black}{lowx}}}(C)= C - {\rm{\color{black}{upx}}}(C)={\rm {\color{black}{upx}}}(C^{T})^{T}$, where $\tau_{1} = n(n + 2)/2$. Note that the symbol ${\color{black} {X}}$ indicates the main diagonal and off diagonal; for instance, ${\rm {\color{black}{utx}}}(B)$ indicates the centrosymmetric matrix obtained by the upper triangular part of \textit{C} related to the main diagonal and off diagonal. For the construction of these operators, we have
 \begin{align}
&\quad {\rm {\color{black}{ \rm xvec}}}(C) = {M_{{\rm {{\color{black}{ \rm xvec}}}}}}{\rm {\color{black}{vec}}}(C),\quad
{\rm {\color{black}{vec}}}\left({{\rm {\color{black}{upx}}}\left( C \right)} \right) = {M_{{{\color{black}{\rm upx}}}}}{\rm {\color{black}{vec}}}(C),\quad {\rm {\color{black}{vec}}}\left( {{\rm {\color{black}{utx}}}\left( C \right)} \right) = {M_{{\rm {\color{black}{utx}}}}}{\rm {\color{black}{vec}}}(C),\label{1.4}
\end{align}
where
\begin{align*}
&{M_{{\rm {\color{black}{xvec}}}}} = {\rm diag}\left( M, {R}_{\tau_1} M {R}_{n^2/2} \right) \in \mathbb{R}^{\tau_1 \times n^2}, \quad \text{with} \\
&{M} = {\rm diag}\left( P_1, P_2, \dots, P_l \right) \in \mathbb{R}^{\tau_1/2 \times n^2/2}, \quad \text{and} \quad {P_\alpha} = \left[ I_{\alpha \times (n-\alpha)}, I_{\alpha} \right] \in \mathbb{R}^{2\alpha \times n},\\
&{M_{{\rm {\color{black}{utx}}}}} = {\rm diag}\left( \hat{M}, {R}_{n^2/2}\hat{M} {R}_{n^2/2} \right) \in \mathbb{R}^{n^2 \times n^2}, \quad \text{with} \\
&\hat{M} = {\rm diag}\left( \hat{P}_1, \hat{P}_2, \dots, \hat{P}_l \right) \in \mathbb{R}^{n^2/2 \times n^2/2}, \quad \text{and} \quad \hat{P}_\alpha = {\rm diag}\left( I_{\alpha-1}, \frac{1}{2}, 0_{n-2\alpha}, \frac{1}{2}, I_{\alpha-1} \right) \in \mathbb{R}^{n \times n},\\
&{M_{{\rm {\color{black}{upx}}}}} = {\rm diag}\left( \tilde{M}, {R}_{n^2/2}\tilde{M} {R}_{n^2/2} \right) \in \mathbb{R}^{n^2 \times n^2}, \quad \text{with} \\
&\tilde{M} = {\rm diag}\left( \tilde{P}_1, \tilde{P}_2, \dots, \tilde{P}_l \right) \in \mathbb{R}^{n^2/2 \times n^2/2}, \quad \text{and} \quad \tilde{P}_\alpha = {\rm diag}\left( I_{\alpha-1}, 0_{n-2\alpha}, I_\alpha \right) \in \mathbb{R}^{n \times n}{\color{blue}{,}}
\end{align*}
where $0_{l\times m}$  and $0_{l}$ denote the zero matrix of order $l\times m$ and $l$, respectively, and ${\rm I_{\textit{l} \times m} = [ I_{\textit{l}} \quad 0_{\textit{l}\times(m-\textit{l})} ], }$ when $m \geq l$ . Also, we have
\begin{equation}\label{1.5}
  \|{\rm {\color{black}{upx}}}(C)\|_{F}\leq \|C\|_{F}.
\end{equation}
If $C=C^{T}$, then we have from \cite{chang1997pertubation},
\begin{equation}\label{1.6}
   \|{\rm {\color{black}{upx}}}(C)\|_{F}\leq \frac{1}{\sqrt{2}}\|C\|_{F}.
\end{equation}
Additionally,
\begin{equation}\label{1.7}
   \|{\rm {\color{black}{upx}}}(C+C^{T})\|_{F}\leq {\sqrt{2}}\|C\|_{F},
\end{equation}
\begin{eqnarray}\label{1.8}
{M_{{\rm {\color{black}{ \rm xvec}}}}}M_{{\rm {\color{black}{ \rm xvec}}}}^T = {I_{{\tau_1}}},\quad M_{{\rm {\color{black}{ \rm xvec}}}}^T{M_{{\rm {\color{black}{ \rm xvec}}}}} = {M_{{\rm {\color{black}{utx}}}}}.
\end{eqnarray}
Consider that ${\rm {\color{black}{uvec}}}^{\dag}:\mathbb{R}^{\tau_1} \to \mathbb{R}^{n \times n}$ is a right inverse of the operator ${\rm {\color{black}{uvec}}}$ such that ${\rm {\color{black}{uvec}}} \cdot {\rm {\color{black}{uvec}}}^{\dag} = {1_{{\tau_1} \times {\tau_1}}}$ and ${\rm {\color{black}{uvec}}}^{\dag } \cdot {\rm {\color{black}{uvec}}} = {\rm {\color{black}{ut}}}$. Then, the matrix of the operator ${\rm {\color{black}{uvec}}}$ is $M_{{\rm {\color{black}{uvec}}}}^T$, i.e., ${\rm {\color{black}{uvec}}}^{\dag}(A) = M_{{\rm {\color{black}{uvec}}}}^T {\rm {\color{black}{vec}}}(A)$.

\begin{lemma}\label{lem1}
\cite{lv2022perturbation}: Let $\mathbb{D}_n$$ \subseteq \mathbb{R}^{n\times n}$ be the set of diagonal centrosymmetric matrices with positive diagonal elements and  $n=2l$. Then, for any $D_n=\textmd{diag}(\delta_{1},\delta_{2},...,\delta_{n},\delta_{n},...,\delta_{2},\delta_{1})\in ${$\mathbb{D}_n$} and centrosymmetric matrix $ A = (a_{\alpha,\beta}) \in\mathbb{R}^{n\times n},$
we have
\begin{center}
\begin{align}
{\rm {\color{black}{upx}}}(AD_{n})={\rm {\color{black}{upx}}}(A)D_{n},\quad D_{n}{\rm {\color{black}{up}}x}(A)={\rm {\color{black}{upx}}}(D_{n}A), \label{1.9}\\
{\rm {\color{black}{lowx}}}(AD_{n})={\rm {\color{black}{lowx}}}(A)D_{n},\quad D_{n}{\rm {\color{black}{lowx}}}(A)={\rm {\color{black}{lowx}}}(D_{n}A),\label{1.10}\\
{\left\| {{\rm {\color{black}{upx}}}\left( A \right) + {D_{n}}^{ - 1}{\rm {\color{black}{upx}}}\left( {{A^T}} \right)D_{n}} \right\|_F} \le \sqrt {1 + \varsigma_{D_{n}}^{2}} {\left\| A \right\|_F},\label{1.11}\\
{\left\| { D_{n}{\rm {\color{black}{lowx}}}\left( A \right) - {D_{n}}^{ - 1}{\rm {\color{black}{lowx}}}\left({{A^T}} \right)D_{n}} \right\|_F} \le \sqrt {2}
\varsigma_{D_{n}} {\left\| A \right\|_F},\label{1.12}
\end{align}
\end{center}
where $ \varsigma_{D_{n}} = \mathop {\max }\limits_{1 \le \alpha < \beta \le n} \{\delta_{\beta}/\delta_{\alpha}\}$.
\end{lemma}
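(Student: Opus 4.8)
The plan is to treat $\mathrm{upx}$ and $\mathrm{lowx}$ as entrywise weighting operators: I would write $\mathrm{upx}(C)_{\alpha\beta}=w_{\alpha\beta}\,c_{\alpha\beta}$ with a fixed, position-dependent weight $w_{\alpha\beta}\in\{0,\tfrac12,1\}$ (equal to $1$ strictly inside the double cone between the two diagonals, $\tfrac12$ on the main diagonal and the anti-diagonal, and $0$ outside), and $\mathrm{lowx}(C)=C-\mathrm{upx}(C)$. First I would record the structural fact that a product of centrosymmetric matrices is again centrosymmetric; since $D_n$ is centrosymmetric, each of $AD_n$, $D_nA$, and $D_n^{-1}\mathrm{upx}(A^T)D_n$ is centrosymmetric, so every operator appearing below is applied to a matrix on which it is defined. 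This is what the centrosymmetric hypothesis on $D_n$ buys us.

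For \eqref{1.9} and \eqref{1.10} I would argue entrywise. Right multiplication by $D_n$ scales column $\beta$ by $\delta_\beta$ and left multiplication scales row $\alpha$ by $\delta_\alpha$, while the weight $w_{\alpha\beta}$ depends only on the position $(\alpha,\beta)$; hence $\mathrm{upx}(AD_n)_{\alpha\beta}=w_{\alpha\beta}a_{\alpha\beta}\delta_\beta=(\mathrm{upx}(A)D_n)_{\alpha\beta}$ and $\mathrm{upx}(D_nA)_{\alpha\beta}=\delta_\alpha w_{\alpha\beta}a_{\alpha\beta}=(D_n\mathrm{upx}(A))_{\alpha\beta}$, which is \eqref{1.9}. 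Then \eqref{1.10} is immediate by subtracting these identities from $AD_n$, respectively $D_nA$, using $\mathrm{lowx}(C)=C-\mathrm{upx}(C)$.

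The substance is in \eqref{1.11} and \eqref{1.12}, the centrosymmetric counterparts of the classical Chang--Paige inequalities, which I would prove by pairing each position $(\alpha,\beta)$ with its transpose $(\beta,\alpha)$. The scale fact I would isolate first is that, because $D_n$ is centrosymmetric, $\delta_k=\delta_{n+1-k}$ forces the set of ratios $\{\delta_\beta/\delta_\alpha:\alpha<\beta\}$ to coincide with its set of reciprocals, so $\delta_\beta/\delta_\alpha\le\varsigma_{D_n}$ holds for \emph{every} $\alpha\neq\beta$ and $\varsigma_{D_n}\ge1$. Writing $\mathcal U$ for the double-cone support of $\mathrm{upx}$ and noting that transposition carries $\mathcal U$ bijectively onto the support of $\mathrm{lowx}$, one evaluates entries: on the interior of $\mathcal U$, $(\mathrm{upx}(A)+D_n^{-1}\mathrm{upx}(A^T)D_n)_{\alpha\beta}=a_{\alpha\beta}+\tfrac{\delta_\beta}{\delta_\alpha}a_{\beta\alpha}$; off the diagonals and outside $\mathcal U$ it is $0$; and on the main/anti-diagonal the two half-weights recombine to the full entry $a_{\alpha\beta}$, using that on the anti-diagonal centrosymmetry gives $\delta_\beta/\delta_\alpha=1$ and $a_{\alpha\beta}=a_{\beta\alpha}$. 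Applying $(x+ry)^2\le(1+r^2)(x^2+y^2)$ with $r=\delta_\beta/\delta_\alpha\le\varsigma_{D_n}$ and summing, the transpose positions $(\beta,\alpha)$ account for the complementary region so that the paired sum reconstitutes $\sum_{\alpha\ne\beta}a_{\alpha\beta}^2$ off the diagonals, while the diagonal/anti-diagonal terms carry the harmless coefficient $1\le1+\varsigma_{D_n}^2$; this gives \eqref{1.11}. For \eqref{1.12} I would run the same pairing with the split $(x-ry)^2\le2(x^2+y^2)$, bounding the relevant weight factors by $\varsigma_{D_n}$ to produce the factor $2\varsigma_{D_n}^2$, while the main-/anti-diagonal contributions cancel by the very same centrosymmetry identities.

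The step I expect to be the main obstacle is the bookkeeping over the X-shaped (double-cone) support: for $(\alpha,\beta)$ ranging over $\mathcal U$ one must track into which region the transpose $(\beta,\alpha)$ falls, so that summing the paired estimates telescopes cleanly to a multiple of $\|A\|_F^2$ with no entry double-counted or omitted. The genuinely delicate point inside this is the anti-diagonal, where the operators place half-weights: only the centrosymmetry relations $\delta_\beta/\delta_\alpha=1$ and $a_{\alpha\beta}=a_{\beta\alpha}$ make those half-weights behave correctly, recombining to a full entry in \eqref{1.11} and cancelling in \eqref{1.12}. It is precisely this anti-diagonal accounting, unavailable without centrosymmetry of both $A$ and $D_n$, that yields the clean constants $\sqrt{1+\varsigma_{D_n}^2}$ and $\sqrt2\,\varsigma_{D_n}$.
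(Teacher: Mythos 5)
Since the paper does not actually prove this lemma---it is imported verbatim from \cite{lv2022perturbation}---your proposal can only be measured against the standard Chang--Paige-style argument that such results follow, and for \eqref{1.9}, \eqref{1.10}, and \eqref{1.11} your proposal is exactly that argument, carried out correctly. In particular, you isolate the two points where centrosymmetry genuinely enters: the palindromic diagonal $\delta_k=\delta_{n+1-k}$ makes the ratio set $\{\delta_\beta/\delta_\alpha:\alpha<\beta\}$ closed under reciprocals, so that $\delta_\beta/\delta_\alpha\le\varsigma_{D_n}$ for \emph{every} $\alpha\neq\beta$ and $\varsigma_{D_n}\ge1$ (without this the bottom cone of the X-shaped support could not be handled, since $\varsigma_{D_n}$ is defined only over $\alpha<\beta$); and on the anti-diagonal the identities $\delta_{n+1-\alpha}=\delta_\alpha$ and $a_{\alpha,n+1-\alpha}=a_{n+1-\alpha,\alpha}$ make the two half-weights recombine. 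Those are the right key facts, and your pairing bookkeeping for \eqref{1.11} closes.

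The genuine gap is \eqref{1.12}. As printed, that inequality is \emph{false}, so your claim that ``the same pairing'' delivers it cannot be right: the left-hand side $D_n\,\mathrm{lowx}(A)-D_n^{-1}\mathrm{lowx}(A^T)D_n$ is not invariant under the rescaling $D_n\mapsto cD_n$ with $c>0$, while $\varsigma_{D_n}$ and the right-hand side are, so letting $c\to\infty$ violates the bound whenever $\mathrm{lowx}(A)\neq0$. Concretely, if you write out the entries as you did for \eqref{1.11}, the first term contributes $\delta_\alpha a_{\alpha\beta}$ rather than $a_{\alpha\beta}$: the stray factor $\delta_\alpha$ is not a ratio of diagonal entries and cannot be absorbed into $\varsigma_{D_n}$, and on the main diagonal the entry of the difference is $\tfrac12(\delta_\alpha-1)a_{\alpha\alpha}$, which does \emph{not} cancel---contradicting the cancellation you assert. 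What your pairing actually proves is the corrected statement $\|\mathrm{lowx}(A)-D_n^{-1}\mathrm{lowx}(A^T)D_n\|_F\le\sqrt2\,\varsigma_{D_n}\|A\|_F$, or equivalently the conjugated form $\|D_n\,\mathrm{lowx}(A)D_n^{-1}-D_n^{-1}[\mathrm{lowx}(A)]^TD_n\|_F\le\sqrt2\,\varsigma_{D_n}\|A\|_F$, which is the version the paper actually invokes in passing from \eqref{2.12} to \eqref{2.13}. So your mathematics is sound but silently aimed at a different (true) statement; a complete answer must observe that the printed \eqref{1.12} carries a typo (the first factor $D_n$ should be deleted, or compensated by a trailing $D_n^{-1}$), since no proof of the literal statement can exist. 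A minor slip in the same sentence: $(x-ry)^2\le2(x^2+y^2)$ is false for $r>1$; the chain you need is $(x-ry)^2\le2x^2+2r^2y^2\le2\varsigma_{D_n}^2(x^2+y^2)$, using $\varsigma_{D_n}\ge1$.
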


The Kronecker product is another useful tool for obtaining explicit expressions of condition numbers. Let $E=(\textsl{e}_{\alpha\beta})\in \mathbb{R}^{r\times s}$ and $F\in \mathbb{R}^{p\times q}$. The Kronecker product of two matrices $E$ and $F$ is defined by (see,  for instance, \cite{chang1997pertubation}):
\begin{equation*}
  E\otimes F = \left[ {\begin{array}{*{20}{c}}
{ {\textsl{e}_{11}}F}&{{\textsl{e}_{12}}F}& \cdots &{ {\textsl{e}_{1n}}F}\\
{ {\textsl{e}_{21}}F}&{ {\textsl{e}_{22}}F}& \cdots &{ {\textsl{e}_{2n}}F}\\
 \vdots & \vdots & \ddots & \vdots \\
{ {\textsl{e}_{m1}}F}&{{\textsl{e}_{m2}F}}& \cdots &{{\textsl{e}_{mn}}F}
\end{array}} \right],
\end{equation*}
 whose properties are listed as follows:
\begin{align}
&{\rm {\color{black}{vec}}}\left( {ECF} \right) = ( {{F^T}\otimes E} ){\rm {\color{black}{vec}}}\left( C \right),\label{1.13}\\
&{\left\| {F\otimes E} \right\|_2} = {\left\| F \right\|_2}{\left\| E \right\|_2},\label{1.14}\\
&( {F\otimes E} )( {H\otimes G} ) = ( {FH\otimes EG} ),\label{1.15}\\
&{( {F\otimes E} )^{ - 1}} = {F^{ - 1}}\otimes {E^{ - 1}},\ \text{if}\  F\  \text{and}\  E\  \text{are nonsingular}, \label{1.16}\\
&\Pi_{m,l} {\rm {\color{black}{vec}}}(E) = {\rm {\color{black}{vec}}}(E^{T}),\label{1.17}
\end{align}
where $\Pi$ is the \( \rm {\color{black}{vec}} \)-permutation matrix, and the matrices $H$ and $G$ are of appropriate orders. We define the entry-wise division between two vectors $x$ and $y$ as $\frac{x}{y}=[z_{1},...,z_{p}]^T$ with $x=[x_{1},\cdots,x_{p}]^T,$ $y=[y_{1},\cdots,y_{p}]^T\in\mathbb{R}^{p}$ such that
\begin{align*}
z_{i}=\begin{cases}\frac{x_{i}}{y_{i}},\,\,\  &{\rm if}\,\ y_{i}\neq 0,\\
      x_{i},\,\,\                             &{\rm if}\,\ y_{i}= 0. \end{cases}
\end{align*}
Following \cite{xie2013condition}, the component-wise distance between $x$ and $y$ is defined as
\begin{align*}
d(x,y)=\left\|\frac{x-y}{y}\right\|_\infty=\mathop{\max}\limits_{i=1,\cdots,p}\left\{\frac{|x_{i}-y_{i}|}{|y_{i}|}\right\}=\begin{cases}\frac{|x_{i_{0}}-y_{i_{0}}|}{|y_{i_{0}}|},\,\,\  &{\rm if}\,\ y_{i_{0}}\neq 0,\\
      |x_{i_{0}}|,\,\,\                             &{\rm if}\,\ y_{i_{0}}= 0. \end{cases}
\end{align*}
Note that when $y_{i_{0}}\neq 0,$ $d(x,y)$ will give the relative distance from $x$ to $y$ with respect to $y$, while the absolute distance is used when $y_{i_{0}}= 0$.

In order to define the mixed and componentwise condition numbers, we also need to introduce the
set $B^{0}(x,\epsilon) =\{{a = [a_{1}, a_{2},\cdots, a_{p}]^{T} \in \mathbb{R}^{p}||a_{i}-x_{i}|\leq \epsilon |x_{i}|, i=1,\cdots,p}\}$, with $x = [x_{1}, x_{2}, \cdots, x_{p}]^{T} \in \mathbb{R}^{p} $ and $\epsilon >0,$ and we denote by Dom($F$) the domain of definition of the function $F:\mathbb{R}^{p} \rightarrow \mathbb{R}^{q}.$ Therefore, the definitions of the mixed and component-wise condition numbers are as follows.

\begin{definition}\label{dfn2.2}
{\rm \cite{cucker2007mixed}:} Let $F:\mathbb{R}^{p}\rightarrow \mathbb{R}^{q}$ be a continuous function defined on an open set Dom($F$)$\subset \mathbb{R}^{p},$ and $x\in {\rm Dom}{(F)},\ x\neq 0$ such that $F(x)\neq 0.$ Then, \\
(\romannumeral1) the mixed condition number for $F$ at $x$ is defined as
\begin{equation*}
m(F,x)=\lim_{\varepsilon\rightarrow 0}\sup_{\substack{a\in B^{0}(x,\epsilon) \\a\neq x}}\frac{\|F(a)-F(x)\|_{\infty}}{\|F(x)\|_{\infty}}\frac{1}{d(a,x)},
\end{equation*}
(\romannumeral2) the component-wise condition number for $F$ at $x$ is defined  as
\begin{equation*}
  c(F,x)=\lim_{\varepsilon\rightarrow 0}\sup_{\substack{a\in B^{0}(x,\epsilon)\\a\neq x}}\frac{d\big(F(a),F(x)\big)}{d(a,x)}.
\end{equation*}
\end{definition}

The following lemma from {\cite{cucker2007mixed}} simplifies the condition number computation when the map $F$ in Definition \ref{dfn2.2} is Fr\'{e}chet differentiable.

\begin{lemma}\label{lem2.1}
Using the same assumptions as in Definition \ref{dfn2.2}, let us assume that $F$ is Fr{\'e}chet differentiable at $x$. Then, we have
\begin{center}
$m(F, x)= \frac{\| |DF(x)||x|\|_{\infty}}{\|F(x)\|_{\infty}}$,\\
$c(F, x)= \left\|\frac{|DF(x)||x|}{|F(x)|}\right\|_{\infty}$,
\end{center}
where $DF(x)$ is the Fr\`{e}chet derivative of $F$ at $x$.
\end{lemma}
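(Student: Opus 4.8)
The plan is to linearize $F$ at $x$ using Fréchet differentiability and to reduce the computation of both condition numbers to a maximization over perturbation directions that is solved explicitly by a sign choice. Writing $a = x + \Delta x$ with $\Delta x = a-x$, differentiability gives
\begin{equation*}
F(a) - F(x) = DF(x)\,\Delta x + r(\Delta x), \qquad \|r(\Delta x)\|_\infty = o(\|\Delta x\|_\infty)\ \text{ as }\ \Delta x \to 0.
\end{equation*}
For $a \in B^{0}(x,\epsilon)$ we have $|\Delta x_i| \le \epsilon|x_i|$, so each index with $x_i=0$ forces $\Delta x_i=0$; setting $\delta := d(a,x)$ I would write $\Delta x_i = \delta|x_i|s_i$ with $|s_i|\le 1$ and $\max_i|s_i|=1$, which also yields $\|\Delta x\|_\infty \le \delta\|x\|_\infty$ and $\delta \le \epsilon$.

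First I would dispose of the remainder. Since $\delta \ge \|\Delta x\|_\infty/\|x\|_\infty$, we have
\begin{equation*}
\frac{\|r(\Delta x)\|_\infty}{\delta} \le \|x\|_\infty\,\frac{\|r(\Delta x)\|_\infty}{\|\Delta x\|_\infty} \longrightarrow 0 \quad \text{as } \epsilon \to 0,
\end{equation*}
uniformly over $a\in B^{0}(x,\epsilon)$, so the remainder contributes nothing to either limit and it suffices to evaluate the suprema of the linear parts. For the linear part, the $j$-th entry of $DF(x)\Delta x$ equals $\delta\sum_i [DF(x)]_{ji}|x_i|s_i$, whose modulus is at most $\delta\sum_i|[DF(x)]_{ji}||x_i| = \delta\,[\,|DF(x)||x|\,]_j$, with equality attained by the sign choice $s_i = \operatorname{sign}\big([DF(x)]_{ji}\big)$. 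Hence, choosing $s_i=\operatorname{sign}\big([DF(x)]_{j^\ast i}\big)$ for an index $j^\ast$ attaining the relevant maximum,
\begin{equation*}
\sup_{s}\frac{\|DF(x)\Delta x\|_\infty}{\delta} = \max_j\, [\,|DF(x)||x|\,]_j = \big\|\,|DF(x)||x|\,\big\|_\infty,
\end{equation*}
and the same argument applied entrywise gives
\begin{equation*}
\sup_s \frac{1}{\delta}\,\max_j \frac{|[DF(x)\Delta x]_j|}{|[F(x)]_j|} = \max_j \frac{[\,|DF(x)||x|\,]_j}{|[F(x)]_j|} = \left\|\frac{|DF(x)||x|}{|F(x)|}\right\|_\infty .
\end{equation*}

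Finally I would assemble the two-sided estimate. Dividing the linearized numerators by $\|F(x)\|_\infty$, respectively by $|F(x)|$ entrywise, the upper bound $\le$ follows for every $a\in B^{0}(x,\epsilon)$ from the triangle inequality together with the two displays above, while the matching lower bound follows by taking the optimal direction $\Delta x_i = \delta|x_i|\operatorname{sign}\big([DF(x)]_{j^\ast i}\big)$ and letting $\epsilon\to 0$; combining these yields the stated formulas for $m(F,x)$ and $c(F,x)$. The hard part will be justifying that the supremum of the linearized ratio is exactly the absolute-value expression $|DF(x)||x|$ — namely the sign-optimization step — and the uniform vanishing of the remainder that permits interchanging $\lim$ and $\sup$; the degenerate indices with $x_i=0$ (forcing $\Delta x_i=0$) or $[F(x)]_j=0$ must be read through the entry-wise division convention preceding Definition \ref{dfn2.2}, but they do not affect the optimization.
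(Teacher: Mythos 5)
Your proof is correct, but there is nothing in the paper to compare it against: the paper does not prove this lemma at all, importing it verbatim from \cite{cucker2007mixed} (see the sentence introducing Lemma \ref{lem2.1}). So the only meaningful comparison is with the standard argument in that reference, and yours is, in substance, exactly that argument: linearize via Fr\'echet differentiability, kill the remainder uniformly using $d(a,x)\ge \|\Delta x\|_\infty/\|x\|_\infty$, and evaluate the supremum of the linear part exactly through the sign choice $s_i=\operatorname{sign}\bigl([DF(x)]_{j^\ast i}\bigr)$. Two details deserve a more careful sentence, though neither is a genuine gap. First, with your optimal sign choice the condition $\max_i|s_i|=1$ may fail when restricted to indices with $x_i\neq 0$ (namely when row $j^\ast$ of $DF(x)$ vanishes at every such index), so the constructed perturbation may have $d(a,x)<\delta$ or even $a=x$; this is harmless because the lower-bound argument only needs $d(a,x)\le\delta$ together with $a\neq x$, and the case $a=x$ occurs precisely when $[\,|DF(x)||x|\,]_{j^\ast}=0$, i.e.\ $|DF(x)||x|=0$, where both formulas hold trivially since your upper bound already forces the limit to be zero. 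Second, in the componentwise case the remainder is divided entrywise by $|[F(x)]_j|$, and since individual entries of $F(x)$ may vanish (only $F(x)\neq 0$ is assumed), the clean statement is that the remainder contributes at most $\max\bigl\{1,\,1/\min_{[F(x)]_j\neq 0}|[F(x)]_j|\bigr\}\cdot\|r(\Delta x)\|_\infty/\delta$, which still tends to zero; the paper's zero-denominator convention then makes the limiting expression agree with $\bigl\||DF(x)||x|/|F(x)|\bigr\|_\infty$, as you note. With those two clarifications your argument is complete.
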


\vspace{-6pt}

\section{Rigorous Perturbation Bounds} \label{sec.2}
\vspace{-2pt}

In this section, we employ two distinct methods to derive rigorous perturbation bounds with norm-wise perturbation for the ${\color{black}{QX}}$-factorization: the refined matrix equation approach and the modified matrix-vector equation technique. Additionally, we establish rigorous perturbation bounds for the ${\color{black}{QX}}$-factorization under component-wise perturbations. The results are presented in the following three subsections.

\subsection{Rigorous Perturbation Bounds By Using Refined Matrix Equation Method}
To derive weak rigorous perturbation bounds for the factors, ${\color{black} {X}}$ and ${\color{black} {Q}}$ in Theorem \ref{thm2.1}, we utilized the refined matrix equation approach.
\begin{theorem} \label{thm2.1}
{Assume that \( A \in \mathbb{R}^{m \times n} \) has a ${\color{black}{QX}}$ decomposition as in \eqref{1.2}, and let \( \Delta A \in \mathbb{R}^{m \times n} \) be centrosymmetric. If \( \|\Delta A A^{\dagger}\|_{2} < 1 \), then there exists a unique ${\color{black}{QX}}$ decomposition of \( A + \Delta A \), i.e.,
\begin{align}\label{2.1}
A + \Delta A = ( {\color{black} {Q}} + \Delta {\color{black} {Q}}) ( {\color{black} {X}}  + \Delta {\color{black} {X}}),
\end{align}
  with
  \begin{align}
& \|\Delta {\color{black} {X}}\|_{F} \leq (\sqrt{6} + \sqrt{3}) \left(\mathop {\inf}\limits_{D_{n} \in \mathbb{D}_n} \sqrt{1 + {\varsigma^{2}}_{D_{n}}}\kappa_{2} (D_{n}^{-1}{{\color{black} {X}}})\right) \|{\color{black} {Q}}\|_{2}\|\Delta A\|_{F}, \label{2.2} \\
& \|\Delta {\color{black} {Q}}\|_{F} \leq (2\sqrt{2} + 2) \left ( \mathop {\inf}\limits_{D_{n} \in \mathbb{D}_n} \|{\color{black} {Q}}{D^{-1}}_{n}\|_{2}\|{\color{black} {X}}^{-1}D_{n}\|_{2}\right) \|{\color{black} {Q}}\|_{2} \|\Delta A\|_{F}\nonumber\\
& \qquad\qquad\qquad + (2\sqrt{3} + \sqrt{6}) \|{\color{black} {Q}}^{T} \Delta A {\color{black} {X}}^{-1}\|_{F} ,  \label{2.3}
  \end{align}
  under the condition
  \begin{align}\label{2.4}
     & \|{\color{black} {Q}}^{T}\Delta A {\color{black} {X}}^{-1}\|_{F} \leq \sqrt{\frac{3}{2}}-1.
  \end{align}}
\end{theorem}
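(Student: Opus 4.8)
The plan is to regard the two decompositions as an exact nonlinear system, reduce it through the $\mathrm{upx}/\mathrm{lowx}$ splitting to explicit formulas for the leading parts of $\Delta X$ and $\Delta Q$ plus controllable quadratic remainders, and then estimate with the scaled inequalities of Lemma \ref{lem1}. Existence and uniqueness come first and are cheap: since $A$ has full column rank with $A = QX$ and $Q^{T}Q = I_n$, one has $A^{\dagger} = X^{-1}Q^{T}$ and hence $A + \Delta A = (I_m + \Delta A\,A^{\dagger})A$; the hypothesis $\|\Delta A\,A^{\dagger}\|_2 < 1$ makes $I_m + \Delta A\,A^{\dagger}$ nonsingular, so $A + \Delta A$ keeps rank $n$, and being centrosymmetric of full column rank it admits the structurally unique thin $QX$ decomposition \eqref{2.1}.

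For the bounds, expanding \eqref{2.1} and subtracting $A = QX$ gives the exact identity $\Delta A = Q\,\Delta X + \Delta Q\,X + \Delta Q\,\Delta X$, while orthogonality of $Q$ and $Q + \Delta Q$ yields $W + W^{T} = -\Delta Q^{T}\Delta Q$ with $W := Q^{T}\Delta Q$. Left-multiplying by $Q^{T}$ and right-multiplying by $X^{-1}$ produces
\begin{equation*}
Y + W + WY = Z, \qquad Y := \Delta X\,X^{-1}, \quad Z := Q^{T}\Delta A\,X^{-1},
\end{equation*}
where $Y$ is again $X$-type because that class is closed under products and inverses. Symmetrizing, substituting $W + W^{T} = -\Delta Q^{T}\Delta Q$, and applying $\mathrm{upx}$ (using $\mathrm{upx}(Y + Y^{T}) = Y$ for $X$-type $Y$) gives the refined matrix equation
\begin{equation*}
Y = \mathrm{upx}(Z + Z^{T}) + \mathrm{upx}(\Delta Q^{T}\Delta Q) - \mathrm{upx}(WY + (WY)^{T}),
\end{equation*}
whose strictly lower (skew) counterpart isolates the skew part of $W$ as $\mathrm{lowx}(Z) - \mathrm{lowx}(Z)^{T}$ plus quadratic terms. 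Finally, from $\Delta Q = (\Delta A - Q\,\Delta X)(X + \Delta X)^{-1}$ one reads off the orthogonal decomposition $\Delta Q = QW + (I_m - QQ^{T})\Delta A\,(X + \Delta X)^{-1}$.

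Now the estimates. For $\Delta X = YX$, inserting $D_n \in \mathbb{D}_n$ (which commutes with $\mathrm{upx}$ by \eqref{1.9}) and applying \eqref{1.11} to the matrix $Z D_n$ reproduces $Y D_n$ at leading order and bounds it by $\sqrt{1 + \varsigma_{D_n}^{2}}\,\|Z D_n\|_F$; then $\|Y X\|_F \le \|Y D_n\|_F\,\|D_n^{-1}X\|_2$ together with $\|Z D_n\|_F \le \|Q\|_2\|\Delta A\|_F\|X^{-1}D_n\|_2$ manufactures the factor $\sqrt{1 + \varsigma_{D_n}^{2}}\,\kappa_2(D_n^{-1}X)\|Q\|_2\|\Delta A\|_F$, and the infimum over $D_n$ gives \eqref{2.2}. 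For $\Delta Q$, the range part $QW$ is bounded through the unscaled skew estimate $\|W\|_F \lesssim \sqrt{2}\,\|Z\|_F = \sqrt{2}\,\|Q^{T}\Delta A\,X^{-1}\|_F$, producing the second term of \eqref{2.3}; the orthogonal part is bounded by $\|\Delta A\|_F\,\|(X + \Delta X)^{-1}\|_2$ and, after scaling $\|X^{-1}\|_2 \le \|Q D_n^{-1}\|_2\|X^{-1}D_n\|_2$ (using $\|Q D_n^{-1}\|_2 = \|D_n^{-1}\|_2$), produces the first term of \eqref{2.3}. Throughout, the quadratic remainders $\mathrm{upx}(\Delta Q^{T}\Delta Q)$, $\mathrm{upx}(WY + (WY)^{T})$ and $(X + \Delta X)^{-1} - X^{-1}$ are controlled by \eqref{1.5}--\eqref{1.7} and $\|WY\|_F \le \|W\|_2\|Y\|_F$; it is exactly here that $\|Q^{T}\Delta A\,X^{-1}\|_F = \|Z\|_F \le \sqrt{3/2} - 1$ is used, to keep $\|Y\|_F$ and $\|W\|_F$ small enough that the quadratic terms can be absorbed and to guarantee $X + \Delta X$ invertible, thereby sharpening the leading constants to $\sqrt{6} + \sqrt{3}$ and $2\sqrt{2} + 2$, $2\sqrt{3} + \sqrt{6}$.

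The main obstacle is this final absorption step: the equation for $Y$ (coupled to the one for $W$ and to $\Delta Q$) is quadratic, so the bounds on $\|\Delta X\|_F$ and $\|\Delta Q\|_F$ must be obtained by simultaneously controlling $\|Y\|_F$, $\|W\|_F$ and $\|\Delta Q\|_F$ and checking that the cross terms never dominate the linear contribution; pinning down $\sqrt{3/2} - 1$ as the exact threshold that makes the resulting scalar inequalities solvable is the delicate part. A secondary point is justifying that the $X$-type structure survives the products and inverses used above, so that the identity $\mathrm{upx}(Y + Y^{T}) = Y$ and the explicit formula for $Y$ are legitimate.
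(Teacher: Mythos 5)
Your proposal follows the same family of argument as the paper: existence via $A+\Delta A=(I_m+\Delta A\,A^{\dagger})A$, an exact $\mathrm{upx}$-identity for $Y=\Delta X X^{-1}$, leading-order estimates with the scaled inequalities \eqref{1.11}--\eqref{1.12} of Lemma \ref{lem1}, and absorption of quadratic remainders under \eqref{2.4}. Two of your choices are genuinely different and worth noting: you parametrize the quadratic remainder through $W=Q^{T}\Delta Q$ and the orthogonality relation $W+W^{T}=-\Delta Q^{T}\Delta Q$, whereas the paper takes it from the Gram identity $(A+\Delta A)^{T}(A+\Delta A)=(X+\Delta X)^{T}(X+\Delta X)$; and you split $\Delta Q = QW + (I_m-QQ^{T})\Delta A(X+\Delta X)^{-1}$, which is actually more careful than the paper's step $\|\Delta Q\|_F=\|QQ^{T}\Delta Q\|_F$ (that equality silently assumes the columns of $\Delta Q$ lie in the range of $Q$, which is not automatic when $m>n$).

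The genuine gap is the absorption step, which you explicitly defer, and it is not mere bookkeeping: it is where every stated constant comes from and where \eqref{2.4} actually enters. There are two concrete problems. First, your choice of remainder makes the $Y$-equation $Y=\mathrm{upx}(Z+Z^{T})+\mathrm{upx}(\Delta Q^{T}\Delta Q)-\mathrm{upx}\bigl(WY+(WY)^{T}\bigr)$ couple three unknowns $\|Y\|_F$, $\|W\|_F$, $\|\Delta Q\|_F$, so "checking the cross terms never dominate" means solving a coupled system of scalar inequalities; the paper's Gram-identity version gives $Y+Y^{T}=Z+Z^{T}+X^{-T}\Delta A^{T}\Delta A X^{-1}-Y^{T}Y$, whose quadratic part involves only data and $\|Y\|_F$ itself, hence a single self-contained quadratic inequality. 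Second, and this is the missing idea: a quadratic inequality $\tfrac{1}{\sqrt 2}t^{2}-t+c\ge 0$ in $t=\|\Delta X X^{-1}\|_F$ only yields $t\le t_{-}$ \emph{or} $t\ge t_{+}$. Condition \eqref{2.4} guarantees precisely that the discriminant $1-4\|Q^{T}\Delta A X^{-1}\|_F-2\|Q^{T}\Delta A X^{-1}\|_F^{2}$ is nonnegative (your "solvability threshold" $\sqrt{3/2}-1$ is its root), but selecting the lower branch requires the continuity argument the paper makes after \eqref{2.7}: $\|\Delta X X^{-1}\|_F$ depends continuously on $\Delta A$ and vanishes as $\Delta A\to 0$, so it cannot lie on the upper branch. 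Only this yields the paper's \eqref{2.8}, i.e.\ $\|\Delta X X^{-1}\|_F<\tfrac{1}{\sqrt 2}$, which is the quantity that produces the denominators $\sqrt 2-1$ and hence the constants $\sqrt 6+\sqrt 3$, $2\sqrt 2+2$, $2\sqrt 3+\sqrt 6$ in \eqref{2.2}--\eqref{2.3}. Your proposal asserts these constants "come out" of the absorption but never derives them, and without the branch-selection argument the absorption cannot even start.
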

\begin{proof}
\rm{{\color{black} Noting that $A$ is a full column rank centrosymmetric matrix, we have that $A + \Delta A$ is also a full column rank centrosymmetric matrix when the condition $\|\Delta A {A}^{\dagger}\|_{2} < 1 $ holds.} Thus, $ A + \Delta A$ {\color{black} has the} ${\color {black}{QX}}$ factorization \eqref{2.1} and together with the fact that $( {\color{black} {Q}} + \Delta {\color{black} {Q}} )^{T}( {\color{black} {Q}} + \Delta {\color{black} {Q}} ) = I_n,$ we obtain
\begin{align*}
( A +\Delta A)^T ( A +\Delta A) = ( {\color{black} {X}} +\Delta {\color{black} {X}})^T ( {\color{black} {X}} +\Delta {\color{black} {X}}),
\end{align*}
utilizing \eqref{1.2} and $A^T A = {\color{black} {X}}^T {\color{black} {X}}$ lead to
\begin{align*}
{\color{black} {X}}^{T}\Delta {\color{black} {X}} + {\Delta {\color{black} {X}}^{T}}{\color{black} {X}} = {\color{black} {X}}^{T}{{\color{black} {Q}}^T}\Delta A + {\Delta A}^T{\color{black}{QX}} + {\Delta A}^T{\Delta A} - {\Delta {\color{black} {X}}}^T\Delta {\color{black} {X}}.
\end{align*}
Pre-multiplying by ${\color{black} {X}}^{-T} $ and post-multiplying by ${\color{black} {X}}^{-1},$ we have
\begin{align}\label{2.5}
  & \Delta {\color{black} {X}}{\color{black} {X}}^{-1} + {{(\Delta {\color{black} {X}}{\color{black} {X}}^{-1})}^{T}} =  {\color{black} {Q}}^T\Delta A{\color{black} {X}}^{-1} +  ({{\color{black} {Q}}^T\Delta A{\color{black} {X}}^{-1} })^{T} + {\color{black} {X}}^{-T}{\Delta A}^{T}\Delta A{\color{black} {X}}^{-1}\nonumber\\
  &\quad\quad\quad\quad\quad\quad\qquad\qquad\quad - {\color{black} {X}}^{-T}{\Delta {\color{black} {X}}}^{T}\Delta {\color{black} {X}}{\color{black} {X}}^{-1}.
 \end{align}
Note that ${\color{black} {X}}$ and $\Delta {\color{black} {X}}$ are ${\color{black} {X}}$-type matrices, then Lemma 2.3 in \cite{lv2022perturbation}
states that $\Delta {\color{black} {X}}{\color{black} {X}}^{-1}$ is also a ${\color{black} {X}}$- type matrix. Using the operator
$\rm {\color{black}{upx}}$ on the preceding equation yields:
 \begin{align}\label{2.6}
 & \Delta {\color{black} {X}}{\color{black} {X}}^{-1} = {\rm {\color{black}{upx}}} [ {\color{black} {Q}}^T\Delta A{\color{black} {X}}^{-1} +  ({{\color{black} {Q}}^T\Delta A{\color{black} {X}}^{-1} })^{T} + {\color{black} {X}}^{-T}\Delta A^{T}\Delta A{\color{black} {X}}^{-1} - {\color{black} {X}}^{-T}\Delta X^{T}\Delta {\color{black} {X}}{\color{black} {X}}^{-1}] \nonumber\\
 & \Delta {\color{black} {X}}{\color{black} {X}}^{-1} = {\rm {\color{black}{upx}}} [ {\color{black} {Q}}^T\Delta A{\color{black} {X}}^{-1} +  ({{\color{black} {Q}}^T\Delta A{\color{black} {X}}^{-1} })^{T}] + {\rm {\color{black}{upx}}}[{\color{black} {X}}^{-T}\Delta A^{T}\Delta A{\color{black} {X}}^{-1}]- {\rm {\color{black}{upx}}}[{\color{black} {X}}^{-T}\Delta {\color{black} {X}}^{T}\Delta {\color{black} {X}}{\color{black} {X}}^{-1}].
 \end{align}
Using the Frobenius norm on the preceding equation, and  by
 \eqref{1.3},{\color{black}{\eqref{1.5}, and \eqref{1.6} the quantity}} $ \|\Delta {\color{black} {X}}{\color{black} {X}}^{-1}\| $ satisfies
 \begin{align}\label{2.7}
 & {\|\Delta {\color{black} {X}}{\color{black} {X}}^{-1}\|}_{F} \leq \sqrt{2}\|{\color{black} {Q}}^T\Delta A{\color{black} {X}}^{-1}\|_{F} + \frac{1}{\sqrt{2}}\| {\color{black} {X}}^{-T}\Delta A^{T}\Delta A{\color{black} {X}}^{-1}\|_{F} + \frac{1}{\sqrt{2}}\| {\color{black} {X}}^{-T}\Delta {\color{black} {X}}^{T}\Delta {\color{black} {X}}{\color{black} {X}}^{-1}\|_{F} \nonumber\\
 & \|\Delta {\color{black} {X}}{\color{black} {X}}^{-1}\|_{F} \leq \sqrt{2}\|{\color{black} {Q}}^T\Delta A{\color{black} {X}}^{-1}\|_{F} + \frac{1}{\sqrt{2}}\left({\|{{\color{black} {Q}}}\Delta A {\color{black} {X}}^{-1}\|_{F}}^{2} + {\| \Delta {\color{black} {X}} {\color{black} {X}}^{-1}\|_{F}}^{2} \right),
 \end{align}
{ where $\|{\color{black} {X}}^{-T}\Delta A^{T}\Delta A\hat{{\color{black} {X}}^{-1}}\|_{F}= \|{\color{black} {X}}^{-T}\Delta A^{T}{\color{black} {Q}}^{T}{\color{black} {Q}}\Delta A\hat{{\color{black} {X}}^{-1}}\|_{F} = \|{({\color{black} {Q}}\Delta AX^{-1})}^{T}{\color{black} {Q}} \Delta A\hat{{\color{black} {X}}^{-1}}\|_{F}$.}\\
Observe that
\begin{align}\label{inequlity}
  \sqrt{2}\|{\color{black} {Q}}^T\Delta A{\color{black} {X}}^{-1}\|_{F} + \frac{1}{\sqrt{2}}\left({\|{{\color{black} {Q}}}\Delta A {\color{black} {X}}^{-1}\|_{F}}^{2} + {\| \Delta {\color{black} {X}} {\color{black} {X}}^{-1}\|_{F}}^{2} \right)- \|\Delta {\color{black} {X}}{\color{black} {X}}^{-1}\|_{F} \geq 0.
 \end{align}
 The inequality \eqref{inequlity} can be considered a quadratic inequality on $\|\Delta {\color{black} {X}}{\color{black} {X}}^{-1}\|_{F}.$ Using \eqref{2.4}, we have
 \begin{align*}
\Sigma \equiv &(-1)^2 - 4 \times \frac{1}{\sqrt{2}} \times \left( \sqrt{2}\left\|{\color{black} {Q}}^T  \Delta A {\color{black} {X}}^{-1}\right\|_F + \frac{1}{\sqrt{2}}\left\|{\color{black} {Q}}  A \Delta {\color{black} {X}}^{-1}\right\|_F^2 \right)\\
 = &  1 - 4 \left\| {\color{black} {Q}}^T  \Delta A {\color{black} {X}}^{-1} \right\|_F - 2 \left\| {\color{black} {Q}}  A \Delta {\color{black} {X}}^{-1} \right\|_F^2 \geq 0.
\end{align*}
So,
\begin{align*}
  \| \Delta {\color{black} {X}}^{-1}{\color{black} {X}} \|_F \leq \frac{1}{\sqrt{2}} (1 - \sqrt{\Sigma}) \quad \text{or} \quad \|\Delta {\color{black} {X}}^{-1}  {\color{black} {X}}\|_F \geq \frac{1}{\sqrt{2}} (1 + \sqrt{\Sigma}).
\end{align*}
Therefore, $\frac{1}{\sqrt{2}} (1 - \sqrt{\Sigma}), \frac{1}{\sqrt{2}} (1 + \sqrt{\Sigma}), \text{ and } \| \Delta {\color{black} {X}}^{-1}{\color{black} {X}} \|_F $  are all continuous functions of the entries of  $\Delta A, \text{ and } \| \Delta {\color{black} {X}} \| \rightarrow 0 \text{ when } \Delta A \rightarrow 0 $,  we have
\begin{align}\label{2.8}
 & \|\Delta {\color{black} {X}}{\color{black} {X}}^{-1}\|_{F} \leq \frac{1}{\sqrt{2}} ( 1 - \sqrt{1 - 4\|{{\color{black} {Q}}^{T}} \Delta A{\color{black} {X}}^{-1}\|_{F}  - 2{\|{\color{black} {Q}}\Delta A{\color{black} {X}}^{-1}\|^{2}}_{F}}) < \frac{1}{\sqrt{2}}.
 \end{align}
Noting that ${\color{black} {X}} = D_{n}{\hat{{\color{black} {X}}}}$, where $D_{n} \in \mathbb{D}_{n},$
\begin{align*}
 & \Delta {\color{black} {X}}\hat{{\color{black} {X}}^{-1}} = {\rm {{\color{black}{upx}}}} [( {\color{black} {Q}}^T\Delta A\hat{{\color{black} {X}}^{-1}}) +  {D_{n}}^{-1}({\color{black} {Q}}^T\Delta A\hat{{\color{black} {X}}^{-1}})^{T}D_{n}]\nonumber\\
 & \quad\quad\quad\qquad + {\rm {{\color{black}{upx}}}}[ {\color{black} {X}}^{-T}\Delta A^{T}\Delta A\hat{{\color{black} {X}}^{-1}}] - {\rm {{\color{black}{upx}}}}[{\color{black} {X}}^{-T}\Delta {\color{black} {X}}^{T}\Delta {\color{black} {X}}\hat{{\color{black} {X}}^{-1}}].
 \end{align*}
Using the Frobenius norm on the preceding equation and {\color{black} {using \eqref{1.11}, \eqref{1.5}, and \eqref{1.3}, it yields to}}
\begin{align}
& \|\Delta {\color{black} {X}}\hat{{\color{black} {X}}^{-1}}\|_{F} \leq \sqrt{1 + {\varsigma^{2}}_{D_{n}}} \| {\color{black} {Q}}^T\Delta A\hat{{\color{black} {X}}^{-1}}\|_{F} + \|{\color{black} {X}}^{-T}\Delta A^{T}\Delta A\hat{{\color{black} {X}}^{-1}}\|_{F}+ \|{\color{black} {X}}^{-T}\Delta {\color{black} {X}}^{T}\Delta {\color{black} {X}}\hat{{\color{black} {X}}^{-1}}\|_{F},\nonumber \\
& \leq  \sqrt{1 + {\varsigma^{2}}_{D_{n}}} \| {\color{black} {Q}}^T\Delta A\hat{{\color{black} {X}}^{-1}}\|_{F} + \|{\color{black} {Q}} \Delta A {\color{black} {X}}^{-1}\|_{F}\|{\color{black} {Q}} \Delta A \hat{{\color{black} {X}}^{-1}}\|_{F}+ \|\Delta {\color{black} {X}}{\color{black} {X}}^{-1}\|_{F}\|\Delta {\color{black} {X}}\hat{{\color{black} {X}}^{-1}}\|_{F},
\end{align}\label{2.9}
which combined with the second inequality of \eqref{2.8}, \eqref{1.3}, \eqref{2.4}, and the fact that $\sqrt{1 + {\varsigma^{2}}_{D_{n}}} \geq 1,$ results in
\begin{align*}
& \|\Delta {\color{black} {X}}\hat{{\color{black} {X}}^{-1}}\|_{F} \leq \sqrt{1 + {\varsigma^{2}}_{D_{n}}} \| {\color{black} {Q}}^T\Delta A\hat{{\color{black} {X}}^{-1}}\|_{F} + (\sqrt{3/2} - 1)\|{\color{black} {Q}} \Delta A \hat{{\color{black} {X}}^{-1}}\|_{F} + \frac{1}{\sqrt{2}} \|\Delta {\color{black} {X}}\hat{{\color{black} {X}}^{-1}}\|_{F},  \\
&\qquad\quad\quad\quad\quad \leq \frac{ \sqrt{3}\sqrt{1 + {\varsigma^{2}}_{D_{n}}} \|{{\color{black} {Q}}}\|_{2}\|\Delta A\|_{F}\|\hat{{\color{black} {X}}}^{-1}\|_{2}}{\sqrt{2} - 1},  \\
&\qquad\quad\quad\quad\quad \leq (\sqrt{6} + \sqrt{3})\sqrt{1 + {\varsigma^{2}}_{D_{n}}} \|\Delta A\|_{F}\|\hat{{\color{black} {X}}}^{-1}\|_{2}\|{\color{black} {Q}}\|_{2}.
\end{align*}
We obtain the bound \eqref{2.2}, using the fact  $\|\Delta {\color{black} {X}}\|_{F} = \|\Delta {\color{black} {X}} \hat{{\color{black} {X}}}^{-1}\hat{{\color{black} {X}}}\|_{F} \leq  \|\Delta {\color{black} {X}}\hat{{\color{black} {X}}^{-1}}\|_{F} \|\hat{{\color{black} {X}}}\|_{2}.$ \\

Next, we are going to {\color{black}{derive}} the bound \eqref{2.3}. Specifically, we have
\begin{align*}
\Delta A = \Delta {\color{black}{QX}} + {({\color{black} {Q}} + \Delta {\color{black} {Q}})}\Delta {\color{black} {X}},
\end{align*}
which demonstrates
\begin{align}\label{2.10}
 & \Delta {\color{black} {Q}} = \Delta A {\color{black} {X}}^{-1} - ({\color{black} {Q}} + \Delta {\color{black} {Q}}) \Delta {\color{black}{X}} {\color{black} {X}}^{-1}.
  \end{align}
Pre-multiplying by ${\color{black} {Q}}^{T}$ and using the fact ${\color{black} {Q}}^{T}{\color{black} {Q}}= I_{n}$, we have
\begin{align*}
& {\color{black} {Q}}^{T}\Delta {\color{black} {Q}} = {\color{black} {Q}}^{T} \Delta A {\color{black} {X}}^{-1} - \Delta {\color{black} {X}} {\color{black} {X}}^{-1} - {\color{black} {Q}}^{T}\Delta {\color{black} {Q}} \Delta {\color{black} {X}} {\color{black} {X}}^{-1},
\end{align*}
which, together with \eqref{2.6}, yields
\begin{align}\label{2.11}
& {\color{black} {Q}}^{T}\Delta {\color{black} {Q}} = {\color{black} {Q}}^{T} \Delta A {\color{black} {X}}^{-1} - {\rm {\color{black}{upx}}} [ {\color{black} {Q}}^T\Delta A{\color{black} {X}}^{-1} +  ({{\color{black} {Q}}^T\Delta A{\color{black} {X}}^{-1} })^{T}]\nonumber\\
&\quad\quad\quad- {\rm {\color{black}{upx}}} [{\color{black} {X}}^{-T}\Delta A^{T}\Delta A{\color{black} {X}}^{-1}] + {\rm {\color{black}{upx}}}[ {\color{black} {X}}^{-T}\Delta {\color{black} {X}}^{T}\Delta {\color{black} {X}}{\color{black} {X}}^{-1}] - {\color{black} {Q}}^{T}\Delta {\color{black} {Q}} \Delta {\color{black} {X}} {\color{black} {X}}^{-1}.
\end{align}
Noting that $A - {\rm {\color{black}{up}}}(A) = {\rm {\color{black}{low}}}(A)$  and  ${\rm {\color{black}{low}}}(A)^{T} = {\rm {\color{black}{up}}}(A^{T})$, we have
\begin{align}\label{2.12}
&  {\color{black} {Q}}^{T}\Delta {\color{black} {Q}} = {\rm {\color{black}{lowx}}}({\color{black} {Q}}^T\Delta A{\color{black} {X}}^{-1}) - [{\rm {\color{black}{lowx}}}({\color{black} {Q}}^T\Delta A{\color{black} {X}}^{-1})]^{T}\nonumber\\
&\quad\quad\quad- {\rm {\color{black}{upx}}} [{\color{black} {X}}^{-T}\Delta A^{T}{\color{black} {Q}}^{T}{\color{black} {Q}}\Delta A{\color{black} {X}}^{-1}] + {\rm {\color{black}{upx}}}[ {\color{black} {X}}^{-T}\Delta {\color{black} {X}}^{T}\Delta {\color{black} {X}}{\color{black} {X}}^{-1}] - {\color{black} {Q}}^{T}\Delta {\color{black} {Q}} \Delta {\color{black} {X}} {\color{black} {X}}^{-1}.
\end{align}
Let $\hat{{\color{black} {Q}}} = {\color{black} {Q}}{D_{n}}^{-1}$ and $ {\color{black} {X}} = D_{n}\hat{{\color{black} {X}}}. $ By taking the Frobenius norm of \eqref{2.12} with \eqref{1.11}, \eqref{1.6}, and \eqref{1.3}, we get
\begin{align}\label{2.13}
&\| {\color{black} {Q}}^{T}\Delta {\color{black} {Q}}\|_{F} \leq \sqrt{2}\varsigma D_{n}\|\hat{{\color{black} {Q}}}^{T} \Delta A \hat{{\color{black} {X}}}^{-1}\|_{F} + \frac{1}{\sqrt{2}} {\|{\color{black} {Q}}^{T}\Delta A {\color{black} {X}}^{-1}\|_{F}}^{2}
+ \frac{1}{\sqrt{2}}{\|\Delta {\color{black} {X}}{\color{black} {X}}^{-1}\|_{F}}^{2} + \|{\color{black} {Q}}^{T}\Delta {\color{black} {Q}}\|_{F} \|\Delta {\color{black} {X}}{\color{black} {X}}^{-1}\|_{F}.
  \end{align}
From the first inequality of \eqref{2.8}, we obtain
\begin{align*}
 \|\Delta {\color{black} {X}}{\color{black} {X}}^{-1}\|_{F} & \leq \frac{1}{\sqrt{2}} \frac{4\|{\color{black} {Q}}^{T}\Delta A {\color{black} {X}}^{-1}\|_{F} + 2{\|Q
^{T}\Delta A {\color{black} {X}}^{-1}\|_{F}}^{2}}{1 + \sqrt{1 - 4\|{\color{black} {Q}}^{T}\Delta A {\color{black} {X}}^{-1}\|_{F} - 2{\|{\color{black} {Q}}^{T}\Delta A {\color{black} {X}}^{-1}\|_{F}}^{2} }} \\
& \leq \frac{1}{\sqrt{2}} (4\|{\color{black} {Q}}^{T}\Delta A {\color{black} {X}}^{-1}\|_{F} + 2{\|{\color{black} {Q}}^{T}\Delta A {\color{black} {X}}^{-1}\|_{F}}^{2}).
   \end{align*}
Squaring the above inequality and using \eqref{2.4} leads to
\begin{align}\label{2.14}
 {\|\Delta {\color{black} {X}}{\color{black} {X}}^{-1}\|_{F}}^{2} \leq (5 + 2\sqrt{6}) {\|{\color{black} {Q}}^{T}\Delta A {\color{black} {X}}^{-1}\|^{2}}_{F}.
\end{align}
Using the inequality \eqref{2.13} together with \eqref{2.4} and {\color{black}{the}} second inequality of \eqref{2.8} results in
\begin{align}\label{2.15}
 \|{\color{black} {Q}}^{T}\Delta {\color{black} {Q}}\|_{F}\leq (2 \sqrt{2} + 2) \varsigma_ {D_{n}} \|\hat{{\color{black} {Q}}}^{T} \Delta A \hat{{\color{black} {X}}}^{-1}\|_{F} + (2\sqrt{3} + \sqrt{6})\|{\color{black} {Q}}^{T}\Delta A {\color{black} {X}}^{-1}\|_{F},
\end{align}
which, combined with the fact that
\begin{align*}
& \|\Delta {\color{black} {Q}}\|_{F} = \|{\color{black} {Q}}^{T}{\color{black} {Q}}\Delta {\color{black} {Q}}\|_{F} \leq \|{\color{black} {Q}}^{T}\Delta {\color{black} {Q}}\|_{F}\|{\color{black} {Q}}\|_{2},
\end{align*}
yields the desired bound \eqref{2.3}.
}
\end{proof}
\begin{remark}
{\rm {\color{black} From the first inequality of \eqref{2.8}, together with \eqref{2.4} and \eqref{2.10}, we} obtain rigorous perturbation bounds listed below:}
\begin{align}
\frac{\|\Delta {\color{black} {X}}\|_{F}}{\|{\color{black} {X}}\|_{2}}& \leq \frac{\sqrt{2}(\mathop {\inf}\limits_{D_{n} \in \mathbb{D}_n} \sqrt{1 + {\varsigma^{2}}_{D_{n}}}\kappa_{2} (D^{-1}{\color{black} {X}}))\left(\frac{\|{\color{black} {Q}}^{T}\Delta A\|_{F}}{\|A\|_{F}} + \kappa_{2}(A)\frac{{\|\Delta A\|^{2}}_{F}}{{\|A\|^{2}}_{2}}\right)} {\sqrt{2} - 1 + \sqrt{1 -4\kappa_{2}(A)\frac{\|\Delta A\|_{F}}{\|A\|_{2}} - 2{\kappa^{2}}_{2}(A) \frac{{\|\Delta A\|^{2}}_{F}}{{\|A\|^{2}}_{2}}}}, \label{2.16}\\
&\leq \frac{\sqrt{3}(\mathop {\inf}\limits_{D_{n} \in \mathbb{D}_n} \sqrt{1 + {\varsigma^{2}}_{D_{n}}}\kappa_{2} (D^{-1}{\color{black} {X}}))\frac{\|\Delta A\|_{F}}{\|A\|_{2}} }{\sqrt{2} - 1 + \sqrt{1 - 4\kappa_{2}(A) \frac{\|\Delta A \|_{F}}{\|A\|_{2}} - 2{\kappa^{2}}_{2}(A) \frac{{\|\Delta A\|^{2}}_{F}}{{\|A\|}^{2}}_{2} }}.\label{2.17}
\end{align}
{\rm The bound \eqref{2.17} is stronger than \eqref{2.16}, which, in turn, is stronger than \eqref{2.2}. However, the
above bounds have a more complicated form.}
\end{remark}

\begin{remark}
{\rm The first-order perturbation bound for the factor ${\color{black} {X}}$ with norm-wise perturbation can be obtained by neglecting the higher-order perturbation terms from \eqref{2.17}}
\begin{align}\label{2.18}
\|\Delta {\color{black} {X}}\|_{F} \lesssim   inf_{D\in{D}_{n}}\left[\sqrt{1 + {\varsigma^{2}}_{D_{n}}} \kappa_{2} (D^{-1}{\color{black} {X}})\right]\|{\color{black} {Q}}\|_{2}\|\Delta A\|_{F},
\end{align}
{\rm which is the same as the one given in {\color{black}{\cite{lv2022perturbation}}}. The difference between \eqref{2.2} and \eqref{2.18} is the constant $(\sqrt{6} + \sqrt{3})$.}
\end{remark}
\subsection{Rigorous Perturbation Bounds by Using Modified Matrix-Vector Equation Approach}
{\color{black} In this subsection, we combine the modiﬁed matrix-vector equation approach \cite{H1, H2, H3}, the technique of Lyapunov majorant function (e.g., \cite[Chapter 5]{konstantinov2003perturbation}), and the Banach ﬁxed point theorem (e.g., \cite[Appendix D]{konstantinov2003perturbation}), to derive the rigorous perturbation bounds for the ${\color {black}{QX}}$ factorization for centrosymmetric matrices.}
\begin{theorem}\label{thm2.2}
Under the assumptions in Theorem \ref{thm2.1}, if
\begin{align}\label{2.19}
  {\|H_{{\color{black} {X}}}\|_{2}}(\|G_{{\color{black} {X}}}\|_{2}\delta + \|H_{{\color{black} {X}}}\|_{2}\delta^{2} ) < \frac{1}{4},
\end{align}
then {\color{black} the} ${\color {black} {QX}}$ factorization \eqref{2.1} is unique for  $ A + \Delta A $  and \\
\begin{align}
\| \Delta {\color{black} {X}}\|_{F}  \leq & \frac{2(\|G_{{\color{black} {X}}}\|_{2}\delta + \|H_{{\color{black} {X}}}\|_{2}{\delta}^{2})}{1 + \sqrt{1 - 4\|H_{{\color{black} {X}}}\|_{2}(\|G_{{\color{black} {X}}}\|_{2}\delta + \|H_{{\color{black} {X}}}\|_{2}{\delta}^{2})}}, \label{2.20}\\
 \leq  & 2(\|G_{{\color{black} {X}}}\|_{2} \delta + \|H_{{\color{black} {X}}}\|_{2}{\delta}^{2}), \label{2.21}\\
 \leq  &  (1 + 2\|G_{{\color{black} {X}}}\|_{2}) \delta, \label{2.22}
\end{align}
where $\delta = \|\Delta A\|_{F},\! G_{{\color{black} {X}}} = M_{\rm {\color{black}{ \rm xvec}}}( {\color{black} {X}}^{T} \otimes I_{n}) M_{\rm {\color{black}{upx}}}[({\color{black} {X}}^{-T} \otimes Q
 ^{T}) + ({\color{black} {Q}}^{T} \otimes {\color{black} {X}}^{-T})\Pi_{m,n}],$ and $
 H_{{\color{black} {X}}} = M_{{\color{black}{ \rm xvec}}}( {\color{black} {X}}^{T} \otimes I_{n})M_{\rm{\color{black}{upx}}}({\color{black} {X}}^{-T} \otimes {\color{black} {X}}^{-T}).$

\end{theorem}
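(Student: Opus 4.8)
The plan is to recast the second-order matrix identity \eqref{2.6} as a single vector fixed-point equation for $w:=\mathrm{xvec}(\Delta X)$ and then combine the Lyapunov majorant technique with the Banach fixed-point theorem. First I would apply $\mathrm{xvec}$ to \eqref{2.6} written as $\Delta X=\mathrm{upx}[\,\cdot\,]X$, using the operator identities \eqref{1.4}, the Kronecker rules \eqref{1.13}, and the vec-permutation relation \eqref{1.17}. The linear-in-$\Delta A$ part $\mathrm{upx}[Q^{T}\Delta A X^{-1}+(Q^{T}\Delta A X^{-1})^{T}]X$ vectorizes, via $(Q^{T}\Delta A X^{-1})^{T}=X^{-T}\Delta A^{T}Q$ and $\mathrm{vec}(\Delta A^{T})=\Pi_{m,n}\mathrm{vec}(\Delta A)$, to exactly $G_{X}\,\mathrm{vec}(\Delta A)$; the quadratic-in-$\Delta A$ term $\mathrm{upx}[X^{-T}\Delta A^{T}\Delta A X^{-1}]X$ vectorizes to $H_{X}\,\mathrm{vec}(\Delta A^{T}\Delta A)$, and the feedback term $-\mathrm{upx}[X^{-T}\Delta X^{T}\Delta X X^{-1}]X$ vectorizes to $-H_{X}\,\mathrm{vec}(\Delta X^{T}\Delta X)$. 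This yields
\begin{align*}
w=G_{X}\,\mathrm{vec}(\Delta A)+H_{X}\,\mathrm{vec}(\Delta A^{T}\Delta A)-H_{X}\,\mathrm{vec}(\Delta X^{T}\Delta X)=:\Psi(w),
\end{align*}
where $\Delta X$ is recovered from $w$ through the linear right inverse of $\mathrm{xvec}$, staying within the $X$-type matrices. Verifying that the linear and quadratic operators collapse precisely into the stated $G_{X}$ and $H_{X}$ is the first thing to confirm.

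Second I would build the scalar Lyapunov majorant. Using \eqref{1.3}, $Q^{T}Q=I_{n}$, $\|\mathrm{vec}(\cdot)\|_{2}=\|\cdot\|_{F}$, the submultiplicativity $\|\Delta A^{T}\Delta A\|_{F}\le\delta^{2}$ and $\|\Delta X^{T}\Delta X\|_{F}\le\|\Delta X\|_{F}^{2}$, together with the structural identity $\|w\|_{2}=\|\Delta X\|_{F}$ (which rests on \eqref{1.8} and the definition of $\mathrm{xvec}$ on $X$-type matrices), I obtain
\begin{align*}
\|\Psi(w)\|_{2}\le\|G_{X}\|_{2}\delta+\|H_{X}\|_{2}\delta^{2}+\|H_{X}\|_{2}\|w\|_{2}^{2}.
\end{align*}
Requiring $\|\Psi(w)\|_{2}\le\rho$ on the ball $\{\|w\|_{2}\le\rho\}$ reduces to the quadratic inequality $\|H_{X}\|_{2}\rho^{2}-\rho+(\|G_{X}\|_{2}\delta+\|H_{X}\|_{2}\delta^{2})\le 0$, whose discriminant is nonnegative exactly under \eqref{2.19}. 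Its smaller root, after rationalizing, is precisely the right-hand side of \eqref{2.20}; I call it $\rho_{\ast}$.

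Third I would establish the contraction and invoke Banach. For $w_{1},w_{2}$ in the ball of radius $\rho_{\ast}$, writing $\Delta X_{1}^{T}\Delta X_{1}-\Delta X_{2}^{T}\Delta X_{2}=\Delta X_{1}^{T}(\Delta X_{1}-\Delta X_{2})+(\Delta X_{1}-\Delta X_{2})^{T}\Delta X_{2}$ gives
\begin{align*}
\|\Psi(w_{1})-\Psi(w_{2})\|_{2}\le 2\|H_{X}\|_{2}\,\rho_{\ast}\,\|w_{1}-w_{2}\|_{2}.
\end{align*}
Since $\rho_{\ast}$ is the smaller root it lies below the vertex $1/(2\|H_{X}\|_{2})$, so $2\|H_{X}\|_{2}\rho_{\ast}=1-\sqrt{1-4\|H_{X}\|_{2}(\|G_{X}\|_{2}\delta+\|H_{X}\|_{2}\delta^{2})}<1$; hence $\Psi$ maps the closed ball into itself and is a contraction there. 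The Banach fixed-point theorem then produces a unique $w$ with $\|w\|_{2}\le\rho_{\ast}$, giving both the uniqueness of \eqref{2.1} and the bound \eqref{2.20}. Finally, \eqref{2.21} follows from \eqref{2.20} by discarding the nonnegative radical in the denominator ($1+\sqrt{\cdot}\ge 1$), and \eqref{2.22} follows because \eqref{2.19} forces $\|H_{X}\|_{2}^{2}\delta^{2}<\tfrac14$, hence $\|H_{X}\|_{2}\delta<\tfrac12$ and $2\|H_{X}\|_{2}\delta^{2}<\delta$, so $2(\|G_{X}\|_{2}\delta+\|H_{X}\|_{2}\delta^{2})<(1+2\|G_{X}\|_{2})\delta$.

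I expect the main obstacle to be the first step: performing the vectorization so that the two transposed pieces, the permutation $\Pi_{m,n}$, and the structure matrices $M_{\mathrm{xvec}}$, $M_{\mathrm{upx}}$ assemble exactly into the stated $G_{X}$ and $H_{X}$ while respecting the $X$-type constraint. A closely related technical point, which makes the fixed-point map well defined on the correct space, is the rigorous justification of $\|w\|_{2}=\|\Delta X\|_{F}$ and of the fact that reconstructing $\Delta X$ from $w$ returns an $X$-type matrix; once these are in place, the majorant and contraction estimates are routine.
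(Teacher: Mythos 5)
Your proposal is correct and follows essentially the same route as the paper: vectorize \eqref{2.6} using \eqref{1.4}, \eqref{1.13}, \eqref{1.17}, and \eqref{1.8} to obtain the fixed-point equation with the stated $G_{{\color{black} {X}}}$ and $H_{{\color{black} {X}}}$, then apply the Lyapunov majorant construction and the Banach fixed-point theorem to land on the smaller root $\rho_{\ast}$, which is exactly \eqref{2.20}. The only notable (and harmless) deviations are that you work with the vector $w={\rm {\color{black}{ \rm xvec}}}(\Delta {\color{black} {X}})$ in the $2$-norm rather than pulling back to matrix space via ${\rm {\color{black}{ \rm xvec}}}^{\dag}$ as the paper does, and your derivation of \eqref{2.22} (via $\|H_{{\color{black} {X}}}\|_{2}\delta<\tfrac12$ directly from \eqref{2.19}) is slightly more direct than the paper's detour through the inequality \eqref{2.32}.
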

\begin{proof}
Assume that the matrices in \eqref{1.2} are perturbed, i.e.,
\begin{align*}
{A \rightarrow \Delta A,\,\,\,{\color{black} {Q}}\rightarrow \Delta {\color{black} {Q}},\,\,\ {\color{black} {X}}\rightarrow \Delta {\color{black} {X}},}
\end{align*}
and the perturbed version of \eqref{1.2} is given by
\begin{align}\label{2.23}
(A + \Delta A)^{T}(A+\Delta A) = ({\color{black} {X}} + \Delta {\color{black} {X}})^{T}({\color{black} {X}}+ \Delta {\color{black} {X}}).
\end{align}
To derive the strong rigorous pertubation bound for the factor ${\color{black} {X}}$, from \eqref{2.6}, we have
\begin{align}\label{2.24}
{\rm {\color{black}{vec}}}(\Delta  {\color{black} {X}}) & = ( {\color{black} {X}}^{T} \otimes I_{n}) M_{\rm {\color{black}{upx}}}[({\color{black} {X}}^{-T} \otimes {\color{black} {Q}}^{T}){\rm {\color{black}{vec}}}(\Delta A) + ({{\color{black} {Q}}^{T}} \otimes {\color{black} {X}}^{-T})\Pi_{m,n}{\rm {\color{black}{vec}}}(\Delta A)]\nonumber\\
& + ( {\color{black} {X}}^{T} \otimes I_{n})M_{\rm {\color{black}{upx}}}[({\color{black} {X}}^{-T} \otimes {\color{black} {X}}^{-T}) {\rm {\color{black}{vec}}} (\Delta A_{T}\Delta A - \Delta {\color{black} {X}}_{T}\Delta {\color{black} {X}}) ].
\end{align}
As $\Delta {\color{black} {X}} $ is an ${\color{black} {X}}$-type matrix, therefore \eqref{1.8} and \eqref{1.4} together give
\begin{align}\label{2.25}
\Delta {\color{black} {X}}{\color{black} {X}}^{-1} = {\rm {\color{black}{upx}}} [ {\color{black} {Q}}^T\Delta A{\color{black} {X}}^{-1} +  ({{\color{black} {Q}}^T\Delta A{\color{black} {X}}^{-1} })^{T} + {\color{black} {X}}^{-T}\Delta A^{T}\Delta A{\color{black} {X}}^{-1} -  {\color{black} {X}}^{-T}\Delta {\color{black} {X}}^{T}\Delta {\color{black} {X}}{\color{black} {X}}^{-1}]
\end{align}
and
\begin{align}\label{2.26}
{\rm {\color{black}{vec}}} (\Delta {\color{black} {X}}) & = {\rm {\color{black}{vec}}}\left(\rm {\color{black}{utx}}(\Delta {\color{black} {X}})\right) = M_{\rm {\color{black}{utx}}}{\rm {\color{black}{vec}}}(\Delta {\color{black} {X}}) \nonumber\\
& = {M_{\rm {\color{black}{ \rm xvec}}}^{T}} M_{\rm {\color{black}{ \rm xvec}}}{\rm {\color{black}{vec}}}(\Delta {\color{black} {X}}) = {M_{\rm {\color{black}{ \rm xvec}}}^{T}}{\rm xvec}(\Delta {\color{black} {X}}).
\end{align}
Substituting \eqref{2.26} into \eqref{2.25}, and then multiplying by $ M_{\rm {\color{black}{ \rm xvec}}}$ from the left and using \eqref{1.8}, leads to,
\begin{align}\label{2.27}
& {\rm {\color{black}{ \rm xvec}}}(\Delta {\color{black} {X}}) = M_{\rm {\color{black}{ \rm xvec}}}( {\color{black} {X}}^{T} \otimes I_{n}) M_{\rm {\color{black}{upx}}}[({\color{black} {X}}^{-T} \otimes {\color{black} {Q}}^{T}){\rm {\color{black}{vec}}}(\Delta A) + ({\color{black} {Q}}^{T} \otimes {\color{black} {X}}^{-T})\Pi_{m,n}{\rm {\color{black}{vec}}} (\Delta A)]\nonumber\\
&\quad\quad\quad\quad\quad+ M_{\rm {\color{black}{ \rm xvec}}}( {\color{black} {X}}^{T} \otimes I_{n})M_{\rm {\color{black}{upx}}}[({\color{black} {X}}^{-T}\otimes {\color{black} {X}}^{-T}){\rm {\color{black}{vec}}}(\Delta A^{T}\Delta A - \Delta {\color{black} {X}}^{T}\Delta {\color{black} {X}})].
\end{align}
{\rm Conversely, multiplying on the left \eqref{2.27} by $M^{T}_{\rm {\color{black}{ \rm xvec}}}$ together with \eqref{2.26} and \eqref{1.8},}
\begin{align}\label{2.28}
&{\rm {\color{black}{ \rm xvec}}} (\Delta {\color{black} {X}}) = M_{\rm {\color{black}{utx}}} ( {\color{black} {X}}^{T} \otimes I_{n})M_{\rm {\color{black}{upx}}}[({\color{black} {X}}^{-T} \otimes {\color{black} {Q}}^{T}){\rm {\color{black}{ \rm xvec}}}(\Delta A) + ({\color{black} {Q}}^{T} \otimes {\color{black} {X}}^{-T})\Pi_{m,n}{\rm {\color{black}{ \rm xvec}}}(\Delta A)]\nonumber \\
&\quad\quad\quad\quad\quad+ M_{\rm {\color{black}{utx}}}( {\color{black} {X}}^{T} \otimes I_{n})M_{\rm {\color{black}{upx}}}[({\color{black} {X}}^{-T} \otimes {\color{black} {X}}^{-T}){\rm {\color{black}{ \rm xvec}}} (\Delta A^{T}\Delta A - \Delta {\color{black} {X}}^{T}\Delta {\color{black} {X}})].
\end{align}
{\rm We can verify from $M_{\rm {\color{black}{utx}}}$ and $M_{\rm {\color{black}{upx}}},$ i.e.,} $M_{\rm {\color{black}{utx}}}({\color{black} {X}}^{T} \otimes I_{n}) M_{\rm {\color{black}{upx}}} = ({\color{black} {X}}^{T} \otimes I_{n}) M_{\rm {\color{black}{upx}}}.$
{\rm Thus, \eqref{2.24} is equivalent to \eqref{2.27}.}
{\rm Applying '${\rm {\color{black}{ \rm xvec}}^{\dag}}$' to \eqref{2.28} results in}
\begin{align*}
&\Delta {\color{black} {X}} = {\rm {\color{black}{ \rm xvec}}}^{\dag}[G_{{\color{black} {X}}} {\rm {\color{black}{vec}}}(\Delta A) + H_{{\color{black} {X}}}{\rm {\color{black}{vec}}}(\Delta A^{T} \Delta A - \Delta {\color{black} {X}}^{T} \Delta {\color{black} {X}})].
\end{align*}
{\rm As a result, we can write the preceding equation as the operator equation for $\Delta {\color{black} {X}}$,} i.e.,
\begin{align}\label{2.29}
 \Delta {\color{black} {X}} & = \Psi(\Delta {\color{black} {X}}, \Delta A) \nonumber\\
& = {\rm {\color{black}{ \rm xvec}}}^{\dag}[G_{{\color{black} {X}}} {\rm {\color{black}{vec}}}(\Delta A) + H_{{\color{black} {X}}}{\rm {\color{black}{vec}}}(\Delta A^{T} \Delta A - \Delta {\color{black} {X}}^{T} \Delta {\color{black} {X}})].
\end{align}
{\rm Now we use the Lyapunov majorant function technique and the Banach fixed point principle to obtain the rigorous perturbation bound for $\Delta {\color{black} {X}},$ depending on {\color{black} the} operator equation \eqref{2.29}. Assume that $Z \in {U}_{n}$ {\color{black}has} the same structure as that of $\Delta {\color{black} {X}},$ and replacing it in the preceding equation with $Z$, we have }
\begin{align}\label{2.30}
Z  = \Psi(Z, \Delta A),
\end{align}
where $\Psi(Z, \Delta A)  = {\rm {\color{black}{ \rm xvec}}}^{\dag}[G_{{\color{black} {X}}} {\rm {\color{black}{vec}}}(\Delta A) + H_{{\color{black} {X}}}{\rm {\color{black}{vec}}}(\Delta A^{T} \Delta A -  Z^{T}Z)].$ {\rm Let} $\|Z\|_{F} \leq \rho $ \text{for some} $ \rho \geq 0 $. {\rm Then, noting \eqref{1.3}},
\begin{align*}
\|\Psi(Z, \Delta A)\|_{F} \leq \|G_{{\color{black} {X}}}\|_{2} \delta + \|H_{{\color{black} {X}}}\|_{2} {\delta}^{2} + \|H_{{\color{black} {X}}}\|_{2} \rho^{2}.
\end{align*}
{\rm Thus, we have the Lyapunov majorant function of the operator equation \eqref{2.30},}
\begin{align}\label{2.31}
 h(\rho, \delta) = \rho,   \quad  {\rm i.e.}, \quad    \|G_{{\color{black} {X}}}\|_{2} \delta + \|H_{{\color{black} {X}}}\|_{2} {\delta}^{2} + \|H_{{\color{black} {X}}}\|_{2} \rho^{2} = \rho.
\end{align}
{\rm Assume that,} $\delta \in \Omega = \{\delta \geq 0: 1 - 4\|H_{{\color{black} {X}}}\|_{2}(\|G_{{\color{black} {X}}}\|_{2}\delta + \|H_{{\color{black} {X}}}\|_{2}{\delta^{2}}) \geq 0\},$ {\rm then there are two roots to the Lyapunov majorant equation \eqref{2.31}, $\rho_{1}(\delta) \leq \rho_{2}(\delta) $ with}
\begin{align*}
\rho_{1}(\delta):= f_{1}(\delta) := \frac{2(\|G_{{\color{black} {X}}}\|_{2}\delta + \|H_{{\color{black} {X}}}\|_{2}{\delta}^{2})}{1 + \sqrt{1 - 4\|H_{{\color{black} {X}}}\|_{2}(\|G_{{\color{black} {X}}}\|_{2}\delta + \|H_{{\color{black} {X}}}\|_{2}{\delta}^{2})}}.
\end{align*}
{\rm Let the set $\mathfrak{B}(\delta)$ be $ \mathfrak{B}(\delta)= \{ Z \in {U}_{n} : \|Z\|_{F} \leq f_{1}(\delta) \} \subset R^{n \times n}$ is closed and convex. The operator $\Psi(., \Delta A)$ maps the set $\mathfrak{B}(\delta)$ into itself.} {\rm Furthermore, when } $\delta \in \Omega_{1} = \{\delta \geq 0: 1 - 4\|H_{{\color{black} {X}}}\|_{2}(\|G_{{\color{black} {X}}}\|_{2}\delta + \|H_{{\color{black} {X}}}\|_{2}{\delta^{2}}) > 0\},$ {\rm the derivative of the function} $h(\rho, \delta)$  {\rm relative to} $\rho$ at $f_{1}(\delta)$ {\rm satisfies}
\begin{align*}
 h'_{\rho}(f_{1}(\delta), \delta) = 1 - \sqrt{1 - 4\|H_{{\color{black} {X}}}\|_{2}(\|G_{{\color{black} {X}}}\|_{2}\delta + \|H_{{\color{black} {X}}}\|_{2}{\delta})\rho} \leq 1.
\end{align*}
{\rm Meanwhile, for} $ Z, \tilde{Z} \in \mathfrak{B}(\delta) $,
\begin{align*}
\|\Psi(Z, \Delta A) - \Psi(\tilde{Z}, \Delta A)\|_{F} \leq h'_{\rho}(f_{1}(\delta), \delta)\|Z - \tilde{Z}\|_{F}.
\end{align*}
{\rm The above facts mean that the operator $\Psi(., \Delta A)$ is contractive on the set $\mathfrak{B}(\delta)$ when $\delta \in {\Omega_{1}}$. According to the Banach fixed point theorem, the operator equation \eqref{2.30} has a unique solution in the set $\mathfrak{B}(\delta)$ for $\delta \in {\Omega_{1}}$, and so does the operator equation \eqref{2.29}, and then the matrix equation \eqref{2.1}. Then $ \|\Delta X \|_{F} \leq f_{1}(\delta)$ for $\delta \in {\Omega}_{1}$. It is sufficient to obtain bound \eqref{2.22}, which is obtained by using \eqref{2.19} and the fact}
\begin{align}\label{2.32}
2\|H_{{\color{black} {X}}}\|_{2}\|\Delta A\|_{F} \leq \sqrt{1 + {\|G_{{\color{black} {X}}}\|_{2}}^{2}} - \|G_{{\color{black} {X}}}\|_{2} < 1.
\end{align}
{\rm Using \eqref{2.32} into \eqref{2.21}, and noting the fact}
\begin{align*}
   & \sqrt{1 + {\|G_{{\color{black} {X}}}\|_{2}}^{2}} \leq 1 + \|G_{{\color{black} {X}}}\|_{2},
\end{align*}
{\rm we obtain the desired bound \eqref{2.22}.}
\end{proof}
\begin{remark}{\rm From \eqref{2.1}, we have,}
\begin{equation}\label{2.33}
\Delta A = {\color{black} {Q}} \Delta {\color{black} {X}} + \Delta {\color{black} {Q}} {\color{black} {X}} + \Delta {\color{black} {Q}} \Delta {\color{black} {X}}
\end{equation}
{\rm and by post-multiplying by ${\color{black} {X}}^{-1},$ we have}
\begin{align}\label{2.34}
 \Delta {\color{black} {Q}} & = \Delta A {\color{black} {X}}^{-1} - {\color{black} {Q}} \Delta {\color{black} {X}} {\color{black} {X}}^{-1} - \Delta {\color{black} {Q}} \Delta {\color{black} {X}} {\color{black} {X}}^{-1}.
\end{align}
{\rm Using the operator ${\rm {\color{black}{vec}}}$ on the preceding equation and utilizing \eqref{1.8}, \eqref{1.17}, and \eqref{1.13}, we obtain the following}
 \begin{align}\label{2.35}
{\rm {\color{black}{vec}}} (\Delta {\color{black} {Q}}) & = ({\color{black} {X}}^{-T} \otimes I_{n}){\rm {\color{black}{vec}}}(\Delta A) - ({\color{black} {X}}^{-T} \otimes L){\rm {\color{black}{vec}}}(\Delta {\color{black} {X}}) - ({\color{black} {X}}^{-T} \otimes I_{n}){\rm {\color{black}{vec}}}(\Delta {\color{black} {Q}} \Delta {\color{black} {X}}).
\end{align}
{\rm  With the help of \eqref{2.35}, and \eqref{2.24}, we get}
\begin{align*}
 \|\Delta {\color{black} {Q}} \|_{F} & \leq  \|G_{{\color{black} {Q}}}\|_{2}\|\Delta A\|_{F} + \|(I_{n}\otimes {\color{black} {Q}})M_{\rm {\color{black}{upx}}} ({\color{black} {X}}^{-T} \otimes {\color{black} {X}}^{-T})\|_{2} \left( \|{\Delta A\|^{2}}_{F} +  \|{\Delta {\color{black} {X}}\|^{2}}_{F} \right) \nonumber\\
& + \|{\color{black} {X}}^{-T} \otimes I_{n}\|_{2}\|\Delta {\color{black} {Q}}\|_{F}\|\Delta {\color{black} {X}}\|_{F},
  \end{align*}
{\rm where}
\begin{align*}
G_{{\color{black} {Q}}} = \left( {\color{black} {X}}^{T} \otimes I_{m}\right) - (I_{n} \otimes {\color{black} {Q}}) M_{\rm {\color{black}{upx}}}\left[({\color{black} {X}}^{-T} \otimes {\color{black} {Q}}^{T}) + ({\color{black} {Q}}^{T} \otimes {\color{black} {X}}^{-T})\Pi_{m,n}\right].
\end{align*}
{\rm By applying the previous result to the ${\color{black}{X}}$ factor, we also derive rigorous perturbation bounds for the ${\color{black}{Q}}$ factor.}
\begin{align*}
\|\Delta {\color{black} {Q}}\|_{F} & \leq \left( 2 + \sqrt{2} \right)\|(I_{n}\otimes {\color{black} {Q}})M_{\rm {\color{black}{upx}}} ({\color{black} {X}}^{-T} \otimes {\color{black} {X}}^{-T})\|_{2} \left( \|{\Delta A\|^{2}}_{F} +  \|{\Delta {\color{black} {X}}\|^{2}}_{F} \right) + \left( 2 + \sqrt{2} \right)\|G_{{\color{black} {Q}}}\|_{2}\|\Delta A\|_{F}, \nonumber\\
&  \leq \left( 2 + \sqrt{2} \right)\left[ \|G_{{\color{black} {Q}}}\|_{2} + \|{\color{black} {X}}^{-T} \otimes {\color{black} {Q}}\|_{2} (1 + \|G_{{\color{black} {X}}}\|_{2}) \right]\|\Delta A\|_{F}.
\end{align*}
{\rm These correspond to the first-order perturbation bounds. However, these bounds are relatively larger than the one given in \cite{lv2022perturbation}. As a result, we skip their detailed derivation.}
\end{remark}
\begin{remark}
{\rm According to {\eqref{2.32}}, the condition \eqref{2.19} can be simplified and strengthened to}
\begin{align}\label{2.36}
   & \|H_{{\color{black} {X}}}\|_{2}(1 + 2\|G_{{\color{black} {X}}}\|_{2})\|\Delta A\|_{F} < \frac{1}{2}.
\end{align}
\end{remark}
\begin{remark}
{\rm Now, we show that the bound \eqref{2.22} is sharper than \eqref{2.2}. For any $D \in \mathbb{D}_{n}$ and $A \in \mathbb{R}^{m \times n}$, using \eqref{1.3}, \eqref{1.14},\eqref{1.15}, and \eqref{1.16}, we have}\\
\begin{align}
\|G_{{\color{black} {X}}}\|_{2}  &= \|M_{\rm{{\color{black}{ \rm xvec}}}}({\color{black} {X}}^{T} \otimes I_{n})(D^{-1} \otimes I_{n})(D \otimes I_{n})M_{\rm{{\color{black}{upx}}}}[({\color{black} {X}}^{-T} \otimes {\color{black} {Q}}^{T}) + ({\color{black} {Q}}^{T} \otimes {\color{black} {X}}^{-T})\Pi_{m,n}]\|_{2} \nonumber \\
  &= \|M_{\rm{{\color{black}{ \rm xvec}}}}({\color{black} {X}}^{T}D^{-1} \otimes I_{n})M_{\rm{upx}}[(D{\color{black} {X}}^{-T} \otimes {\color{black} {Q}}^{T}) + (D{\color{black} {Q}}^{T} \otimes X
  {\color{black} {X}}^{-T})\Pi_{m,n}]\|_{2} \nonumber \\
  &\le \|{\color{black} {X}}^{T}D^{-1}\|_{2}\|M_{\rm{{\color{black}{upx}}}}[(D{\color{black} {X}}^{-T} \otimes {\color{black} {Q}}^{T}) + (D{\color{black} {Q}}^{T} \otimes {\color{black} {X}}^{-T})\Pi_{m,n}]\|_{2} \nonumber\\
  &= \|{\color{black} {X}}^{T}D^{-1}\|_{2}\mathop {\max }\limits_{\|\rm {{\color{black}{vec}}}(A)\|_{2} = 1}\|M_{\rm{{\color{black}{upx}}}}[(D{\color{black} {X}}^{-T} \otimes {\color{black} {Q}}^{T}) + (D{\color{black} {Q}}^{T} \otimes {\color{black} {X}}^{-T})\Pi_{m,n}]\rm{{\color{black}{vec}}}(A)\|_{2}. \label{e1}
\end{align}
{\rm Combining \eqref{1.17}, \eqref{1.9}, \eqref{1.13}, \eqref{1.11}, and \eqref{1.3}, we have the following}
\begin{align}
\|G_{{\color{black} {X}}}\|_{2}  &   \leq \|D^{-1}{\color{black} {X}}\|_{2}\mathop {\max }\limits_{\|\rm {{\color{black}{vec}}}(A)\|_{2} = 1}\|M_{\rm{{\color{black}{upx}}}}\rm {{\color{black}{vec}}}({\color{black} {Q}}^{T}A{\color{black} {X}}^{-1}D - ({\color{black} {X}}^{-T}A^{T}{\color{black} {Q}}D))\|_{2} \nonumber \\
   &= \|D^{-1}{\color{black} {X}}\|_{2}\mathop {\max }\limits_{\|\rm {{\color{black}{vec}}}(A)\|_{2} = 1}\|\rm {{\color{black}{vec}}}(\rm{{\color{black}{upx}}}({\color{black} {Q}}^{T}A{\color{black} {X}}^{-1}D - D^{-1}({\color{black} {Q}}^{T}A{\color{black} {X}}^{-1}D)^{T})D)\|_{2} \nonumber \\
   &= \|D^{-1}{\color{black} {X}}\|_{2}\mathop {\max }\limits_{\|A\|_{F} = 1}\|\rm{{\color{black}{upx}}}({\color{black} {Q}}^{T}A{\color{black} {X}}^{-1}D - D^{-1}({\color{black} {Q}}^{T}A{\color{black} {X}}^{-1}D)^{T})D\|_{2} \nonumber \\
   &\leq \|D^{-1}{\color{black} {X}}\|_{2}\mathop {\max }\limits_{\|A\|_{F} = 1}\sqrt{1 + \varsigma^{2}_{D}} \|{\color{black} {Q}}^{T}A{\color{black} {X}}^{-1}D\|_{F} \nonumber \\
   &\leq \|D^{-1}{\color{black} {X}}\|_{2}\mathop {\max }\limits_{\|A\|_{F} = 1}\sqrt{1 + \varsigma^{2}_{D}}\|{\color{black} {Q}}\|_{2}\|{\color{black} {X}}^{-1}D\|_{2}\|A\|_{F} \nonumber \\
   &\leq \sqrt{1 + \varsigma^{2}_{D}} \|D^{-1}{\color{black} {X}}\|_{2}\|{\color{black} {X}}^{-1}D\|_{2} \nonumber \\
   &\leq \sqrt{1 + \varsigma^{2}_{D}}\kappa_{2}(D^{-1}{\color{black} {X}}).\label{e2}
\end{align}
Thus, substituting \eqref{e2} into \eqref{e1} yields
\begin{align*}
\|G_{{\color{black} {X}}}\|_{2} & \leq \mathop {\inf}\limits_{D_{n} \in \mathbb{D}_n}\left(\sqrt{1 + \varsigma^{2}_{D}}\kappa_{2}(D^{-1}{\color{black} {X}})\right).
\end{align*}
{\rm Thus, we have proved that the bound \eqref{2.22} is indeed tighter than \eqref{2.2}. Similarly, the bound \eqref{2.20} is also tighter than \eqref{2.2}. }
\end{remark}

\subsection{Component-wise Perturbation}
Consider the component-wise perturbation in the matrix, as described in \cite{higham1994survey, samar2021refined}
\begin{align}\label{2.38}
   | \Delta A | \leq \epsilon K | A | ,  \quad   K = (k_{ij})\in {\rm \mathbb{R}}^{m \times m}, \quad   0 \leq k_{ij} \leq 1 ,
\end{align}
where, $\epsilon \geq 0$ is a small scalar and $ \Delta A $ is the perturbation matrix. Using a matrix equation technique, we now examine component-wise, rigorous perturbation bounds for the ${\color{black} {Q}}$ and ${\color{black} {X}}$ factors. These bounds are derived similarly to those in \cite{xie2015sensitivity}.

\begin{theorem}\label{thm 2.3}
If $A$ has a unique ${\color{black}{QX}}$ factorization, and suppose the conditions of Theorem \ref{thm2.1} hold and $\Delta A$ satisfies \eqref{2.38}.  Then the following inequality is satisfied
\begin{align}\label{2.39}
& {\parallel \mid {\color{black} {Q}}^{T} \mid \mid K \mid {\color{black} {X}}^{-1} \parallel }_{F} {\rm cond}({\color{black} {X}}) \epsilon < \frac{1}{\sqrt{6} + 2}.
\end{align}
Moreover, $A + \Delta A$ also possesses the ${\color{black}{QX}}$ factorization, and the following bounds hold:
\begin{align}\label{2.40}
  \| \Delta {\color{black} {X}} \|_{F} \leq \frac {\mathop {\inf}\limits_{D_{n} \in \mathbb{D}_n}\||{\color{black} {X}}| {\color{black} {X}}^{-1}D_{n}\|_{2}(\sqrt{2 + 2{\varsigma^{2}}_{D}}\||{\color{black} {Q}}^{T}| K|{\color{black} {Q}}|\|_{F} + (\sqrt{3}-\sqrt{2})\||{\color{black} {Q}}^{T}| K|{\color{black} {Q}}|\|_{F})\epsilon}{\sqrt{2} - 1},
\end{align}
\begin{align}\label{2.41}
   \| \Delta {\color{black} {X}} \|_{F} \leq (\sqrt{3} + \sqrt{6}) \left( \mathop {\inf}\limits_{D_{n} \in \mathbb{D}_n} \sqrt{1 + {\varsigma^{2}}_{D}} \||{\color{black} {X}}||{\color{black} {X}}^{-1}|D_{n}\|_{2} \|{{D}_{n}}^{-1}{\color{black} {X}}\|_{2}\right)\||{\color{black} {Q}}^{T}| K|{\color{black} {Q}}|\|_{F}\epsilon,
\end{align}
\begin{align}\label{2.42}
  \| \Delta {\color{black} {Q}} \|_{F} \leq (\sqrt{6} + 2 + 2\sqrt{2} + 2\sqrt{3}) \||{\color{black} {Q}}^{T}| K|{\color{black} {Q}}|\|_{F}{ \rm cond}({\color{black} {X}}) \epsilon,
\end{align}
where ${\rm cond}({\color{black} {X}})$ is defined as ${\||{\color{black} {X}}||{\color{black} {X}}^{-1}|\|}_{2}$.
\end{theorem}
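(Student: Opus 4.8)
The plan is to mirror the refined matrix equation argument of Theorem~\ref{thm2.1}, but to feed the component-wise data \eqref{2.38} into the same machinery in place of the norm-wise quantity controlled by \eqref{2.4}. The starting point is again the master identity \eqref{2.6} for $\Delta {\color{black} {X}}{\color{black} {X}}^{-1}$. The crucial observation is that the entrywise hypothesis converts cleanly into a Frobenius bound on the driving term ${\color{black} {Q}}^{T}\Delta A {\color{black} {X}}^{-1}$: since $|\Delta A|\le \epsilon K|A|$ and $A={\color{black}{QX}}$, taking absolute values entrywise and using $|{\color{black}{QX}}|\le|{\color{black} {Q}}||{\color{black} {X}}|$ gives $|{\color{black} {Q}}^{T}\Delta A {\color{black} {X}}^{-1}|\le \epsilon\,|{\color{black} {Q}}^{T}|\,K\,|{\color{black} {Q}}|\,|{\color{black} {X}}|\,|{\color{black} {X}}^{-1}|$, whence
\begin{align*}
\|{\color{black} {Q}}^{T}\Delta A {\color{black} {X}}^{-1}\|_{F}\le \epsilon\,\||{\color{black} {Q}}^{T}|\,K\,|{\color{black} {Q}}|\|_{F}\,\||{\color{black} {X}}||{\color{black} {X}}^{-1}|\|_{2}=\epsilon\,\||{\color{black} {Q}}^{T}|K|{\color{black} {Q}}|\|_{F}\,{\rm cond}({\color{black} {X}}).
\end{align*}
First I would record the elementary identity $\tfrac{1}{\sqrt{6}+2}=\sqrt{3/2}-1$, so that condition \eqref{2.39} is exactly the component-wise realization of \eqref{2.4}; this forces the driving term below $\sqrt{3/2}-1$, places us inside the hypotheses of Theorem~\ref{thm2.1}, and secures the existence and uniqueness of the perturbed factorization \eqref{2.1}.

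Next I would derive the bounds \eqref{2.40} and \eqref{2.41} for $\Delta {\color{black} {X}}$. Writing ${\color{black} {X}}=D_{n}\hat{{\color{black} {X}}}$ with $D_{n}\in\mathbb{D}_n$ and applying ${\rm upx}$ to \eqref{2.6}, I would take Frobenius norms exactly as in \eqref{2.9}, now invoking \eqref{1.11}, \eqref{1.5} and \eqref{1.3} together with the component-wise estimate above. The quadratic inequality in $\|\Delta {\color{black} {X}}{\color{black} {X}}^{-1}\|_{F}$ established in \eqref{2.8} applies verbatim, giving $\|\Delta {\color{black} {X}}{\color{black} {X}}^{-1}\|_{F}<1/\sqrt{2}$; substituting this back and using \eqref{2.39} absorbs the quadratic remainder into the linear term. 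Tracking the scaling $D_{n}$ through Lemma~\ref{lem1} produces the factor $\sqrt{1+\varsigma^{2}_{D}}$ in \eqref{2.41} and the $\sqrt{2+2\varsigma^{2}_{D}}$ and $(\sqrt{3}-\sqrt{2})$ contributions in \eqref{2.40}; taking the infimum over $D_{n}\in\mathbb{D}_n$ finishes both. The constant $(\sqrt{3}+\sqrt{6})$ in \eqref{2.41} is the expected match with \eqref{2.2}.

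For the factor ${\color{black} {Q}}$ I would start from the representation \eqref{2.12} of ${\color{black} {Q}}^{T}\Delta {\color{black} {Q}}$, set $\hat{{\color{black} {Q}}}={\color{black} {Q}}D_{n}^{-1}$, and take Frobenius norms using \eqref{1.12} for the two ${\rm lowx}$ terms (yielding the factor $\sqrt{2}\varsigma_{D_{n}}$), together with \eqref{1.6} and \eqref{1.3}. Feeding in the squared estimate \eqref{2.14} for $\|\Delta {\color{black} {X}}{\color{black} {X}}^{-1}\|_{F}^{2}$ and the component-wise bound on the driving term reduces \eqref{2.13} to the analogue of \eqref{2.15}; finally $\|\Delta {\color{black} {Q}}\|_{F}\le\|{\color{black} {Q}}^{T}\Delta {\color{black} {Q}}\|_{F}\|{\color{black} {Q}}\|_{2}$ gives \eqref{2.42}. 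Here the two separate terms of \eqref{2.3} coalesce: in the component-wise setting both $\|\hat{{\color{black} {Q}}}^{T}\Delta A\hat{{\color{black} {X}}}^{-1}\|_{F}$ and $\|{\color{black} {Q}}^{T}\Delta A {\color{black} {X}}^{-1}\|_{F}$ are dominated by the single quantity $\||{\color{black} {Q}}^{T}|K|{\color{black} {Q}}|\|_{F}\,{\rm cond}({\color{black} {X}})\,\epsilon$, so the two norm-wise constants $(2\sqrt{2}+2)$ and $(2\sqrt{3}+\sqrt{6})$ add to give precisely $(\sqrt{6}+2+2\sqrt{2}+2\sqrt{3})$.

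The main obstacle I anticipate is the careful bookkeeping in passing from the entrywise inequality $|\Delta A|\le\epsilon K|A|$ to Frobenius-norm estimates that yield \emph{exactly} the factors ${\rm cond}({\color{black} {X}})$ and $\||{\color{black} {Q}}^{T}|K|{\color{black} {Q}}|\|_{F}$ displayed in \eqref{2.40}--\eqref{2.42}: one must split $|A|=|{\color{black}{QX}}|\le|{\color{black} {Q}}||{\color{black} {X}}|$ and peel off $\||{\color{black} {X}}||{\color{black} {X}}^{-1}|\|_{2}$ at the right moment, and then route the diagonal scaling $D_{n}$ and the ratio $\varsigma_{D_{n}}$ through Lemma~\ref{lem1} consistently for each of the three bounds. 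A secondary point to verify is the self-consistency of the quadratic (fixed-point-type) step, namely that \eqref{2.39} genuinely forces \eqref{2.4} so that \eqref{2.8} remains available; once this is confirmed, the remaining manipulations are routine refinements of those already carried out in Theorem~\ref{thm2.1}.
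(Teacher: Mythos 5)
Your proposal is correct and follows essentially the same route as the paper: the paper's own proof likewise converts the component-wise hypothesis \eqref{2.38} into Frobenius bounds on the driving term ${\color{black} {Q}}^{T}\Delta A{\color{black} {X}}^{-1}$ (via $|A|\le|{\color{black} {Q}}||{\color{black} {X}}|$ and \eqref{1.3}), uses the identity $\tfrac{1}{\sqrt{6}+2}=\sqrt{3/2}-1$ so that \eqref{2.39} plays the role of \eqref{2.4}, and then reruns the refined matrix equation machinery of Theorem \ref{thm2.1} — the quadratic inequality giving $\|\Delta {\color{black} {X}}{\color{black} {X}}^{-1}\|_{F}<1/\sqrt{2}$, the $D_{n}$-scaling through Lemma \ref{lem1} for \eqref{2.40}--\eqref{2.41}, and the ${\rm lowx}$ representation of ${\color{black} {Q}}^{T}\Delta {\color{black} {Q}}$ for \eqref{2.42}. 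Your shortcut of quoting \eqref{2.15} with $D_{n}=I_{n}$ (so the constants $(2\sqrt{2}+2)$ and $(2\sqrt{3}+\sqrt{6})$ simply add) reproduces exactly the constant the paper obtains by rederiving the analogous inequalities \eqref{2.45}--\eqref{2.48} in the component-wise setting, so it is the same computation in a different order rather than a different method.
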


\begin{proof}
{\rm From equation \eqref{2.6}, we have}
\begin{align}\label{2.43}
  \Delta {\color{black} {X}}{\color{black} {X}}^{-1} = {\rm{\color{black}{upx}}}[{\color{black} {Q}}^{T} \Delta A {\color{black} {X}}^{-1} + ({\color{black} {Q}}^{T} \Delta A {\color{black} {X}}^{-1})^{T} ] + {\rm{\color{black}{upx}}}[{\color{black} {X}}^{-T}({\Delta A}^{T} \Delta A - {\Delta {\color{black} {X}}}^{T} \Delta {\color{black} {X}}){\color{black} {X}}^{-1}].
\end{align}
{\rm Using the Frobenius norm on the above equation and by \eqref{1.11}, with $D_{n} = I_{n},$ gives us}
\begin{align*}
\|\Delta {\color{black} {X}}{\color{black} {X}}^{-1}\|_{F} & \leq \sqrt{2} \|{\color{black} {Q}}^{T} \Delta A {\color{black} {X}}^{-1}\|_{F} + \frac{1}{\sqrt{2}}\|X
^{-1} \Delta A^{T}{\color{black} {Q}}^{T}{\color{black} {Q}}  \Delta A {\color{black} {X}}^{-1} \|_{F} + \frac{1}{\sqrt{2}}{\|\Delta {\color{black} {X}}{\color{black} {X}}^{-1}\|}^{2}_{F} \\
& \|\Delta {\color{black} {X}}{\color{black} {X}}^{-1}\|_{F} \leq \sqrt{2} \|{\color{black} {Q}}^{T} \Delta A {\color{black} {X}}^{-1}\|_{F} + \frac{1}{\sqrt{2}}{\|{\color{black} {Q}}  \Delta A {\color{black} {X}}^{-1} \|}^{2}_{F} + \frac{1}{\sqrt{2}}{\|\Delta {\color{black} {X}}{\color{black} {X}}^{-1}\|}^{2}_{F}.
\end{align*}
{\rm Therefore,}
\begin{align}\label{2.44}
   &  \frac{1}{\sqrt{2}}{\|\Delta {\color{black} {X}}{\color{black} {X}}^{-1}\|}^{2}_{F} - \|\Delta {\color{black} {X}}{\color{black} {X}}^{-1}\|_{F} + \sqrt{2} \|{\color{black} {Q}}^{T} \Delta A {\color{black} {X}}^{-1}\|_{F} + \frac{1}{\sqrt{2}}{\|{\color{black} {Q}}  \Delta A {\color{black} {X}}^{-1} \|}^{2}_{F} \geq 0.
\end{align}
{\rm The inequality presented above can be regarded as a quadratic inequality on $\|\Delta {\color{black} {X}}{\color{black} {X}}^{-1}\|_{F}$. Using \eqref{2.39},  we have the following}
\begin{align*}
\mathfrak{\gamma} & \equiv {(-1)}^{2} - 4 \times \frac{1}{\sqrt{2}} \times \left ( \sqrt{2} \|{\color{black} {Q}}^{T} \Delta A {\color{black} {X}}^{-1}\|_{F} + \frac{1}{\sqrt{2}}{\|{\color{black} {Q}}  \Delta A {\color{black} {X}}^{-1} \|}^{2}_{F} \right ) \\
& = 1-4 \|{\color{black} {Q}}^{T} \Delta A {\color{black} {X}}^{-1}\|_{F} - 2{\|{\color{black} {Q}}  \Delta A {\color{black} {X}}^{-1} \|}^{2}_{F} \geq 0.
\end{align*}
{\rm As a result, we have $\|\Delta {\color{black} {X}}{\color{black} {X}}^{-1}\|_{F} \leq \frac{1}{\sqrt{2}} (1 - \sqrt{\mathfrak{\gamma}})$ or $\|\Delta {\color{black} {X}}{\color{black} {X}}^{-1}\|_{F} \geq \frac{1}{\sqrt{2}} (1 + \sqrt{\mathfrak{\gamma}}).$ Since $ \frac{1}{\sqrt{2}} (1 - \sqrt{\mathfrak{\gamma}})$ or $ \frac{1}{\sqrt{2}}(1 + \sqrt{\mathfrak{\gamma}}),$ and $\|\Delta {\color{black} {X}}{\color{black} {X}}^{-1}\|_{F}  $ are continuous functions of the entries of $\Delta A $, and $\Delta {\color{black} {X}} \rightarrow 0 $ as $\Delta A \rightarrow $ 0, we must have}
 \begin{align}\label{2.ab}
 \|\Delta {\color{black} {X}}{\color{black} {X}}^{-1}\|_{F} \leq \frac{1}{\sqrt{2}}\left (1 - \sqrt{1-4 \|{\color{black} {Q}}^{T} \Delta A {\color{black} {X}}^{-1}\|_{F} - 2{\|{\color{black} {Q}}  \Delta A {\color{black} {X}}^{-1} \|}^{2}_{F}}\right ) < \frac{1}{\sqrt{2}}.
 \end{align}
{\rm From right-multiplying \eqref{2.43} by $D_{n}$, where $D_{n} \in \mathbb{D}_{n}$, we obtain the following}
\begin{align*}
\Delta {\color{black} {X}}{\color{black} {X}}^{-1}D_{n} & = {\rm{\color{black}{upx}}}[({\color{black} {Q}}^{T} \Delta A {\color{black} {X}}^{-1}D_{n}) + {D^{-1}}_{n}(D_{n}{\color{black} {X}}^{-T}\Delta A^{T} {\color{black} {Q}})D_{n} ] \nonumber \\
& + {\rm {\color{black}{upx}}}[{\color{black} {X}}^{-T}({\Delta A}^{T} \Delta A - {\Delta {\color{black} {X}}}^{T} \Delta {\color{black} {X}}){\color{black} {X}}^{-1}D_{n}].
\end{align*}
{\rm By taking the Frobenius norm and utilizing \eqref{1.11}, we have}
\begin{align*}
\|\Delta {\color{black} {X}}{\color{black} {X}}^{-1}D_{n}\|_{F}  & \leq \sqrt{1 + {\varsigma}^{2}D_{n}}\|{\color{black} {Q}}^{T} \Delta A {\color{black} {X}}^{-1}D_{n}\|_{F} + \|{\color{black} {X}}^{-T}\Delta A^{T} {\color{black} {Q}}^{T}{\color{black} {Q}} \Delta A {\color{black} {X}}^{-1} D_{n}\|_{F} \nonumber \\
 &+ \|{\color{black} {X}}^{-T}\Delta {\color{black} {X}}^{T} {\color{black} {Q}}^{T}{\color{black} {Q}} \Delta {\color{black} {X}} {\color{black} {X}}^{-1} D_{n}\| \\
&\leq \sqrt{1 + {\varsigma}^{2}D_{n}}\|{\color{black} {Q}}^{T} \Delta A {\color{black} {X}}^{-1}D_{n}\|_{F} +  \|{\color{black} {Q}} \Delta A {\color{black} {X}}^{-1}\|_{F}\|{\color{black} {Q}} \Delta A {\color{black} {X}}^{-1}D_{n}\|_{F}\nonumber\\
& + \|\Delta {\color{black} {X}}{\color{black} {X}}^{-1}\|_{F}\|\Delta {\color{black} {X}}{\color{black} {X}}^{-1}D_{n}\|_{F}.
\end{align*}
{\rm Now using the fact $\|\Delta {\color{black} {X}}{\color{black} {X}}^{-1}\|_{F} \leq \frac{1}{\sqrt{2}}$, we get}
\begin{align*}
\|\Delta {\color{black} {X}}{\color{black} {X}}^{-1}D_{n}\|_{F} \leq \frac{\sqrt{2 + 2\varsigma^{2}D_{n}}\|{\color{black} {Q}}^{T} \Delta A {\color{black} {X}}^{-1}D_{n}\|_{F} +  \sqrt{2}\|{\color{black} {Q}} \Delta A {\color{black} {X}}^{-1}\|_{F}\|{\color{black} {Q}} \Delta A {\color{black} {X}}^{-1}D_{n}\|_{F}}{\sqrt{2} - 1},
\end{align*}
{\rm with the help of} $\|{\color{black} {Q}}^{T} \Delta A {\color{black} {X}}^{-1}D_{n}\|_{F} \leq \||{\color{black} {X}}||{\color{black} {X}}^{-1}D_{n}|\|_{2} \||{\color{black} {Q}}^{T}| K |{{\color{black} {Q}}}|\|_{F} \epsilon$ {\rm and} $\sqrt{1 + \varsigma^{2}D_{n}} \geq 1 $, {\rm we can write}\\
\begin{align*}
 \|\Delta {\color{black} {X}}{\color{black} {X}}^{-1}D_{n}\|_{F} & \leq \frac{\sqrt{2 + 2\varsigma^{2}D_{n}}\||{\color{black} {X}}||{\color{black} {X}}^{-1}D_{n}|\|_{2} \||{\color{black} {Q}}^{T}|K |{\color{black} {X}}^{-1}|\|_{F} \epsilon + (\sqrt{3} - \sqrt{2})\||{\color{black} {X}}||{\color{black} {X}}^{-1}D_{n}|\|_{2} \||{\color{black} {Q}}^{T}| K |{{\color{black} {Q}}}|\|_{F} \epsilon}{\sqrt{2} - 1} \\
 &\leq (\sqrt{3} + \sqrt{6})\||{\color{black} {X}}||{\color{black} {X}}^{-1}D_{n}|\|_{2} \||{\color{black} {Q}}^{T}| K |{{\color{black} {Q}}}|\|_{F} \epsilon.
\end{align*}
{\rm Using} $\|\Delta {\color{black} {X}}\|_{F}   = \|\Delta {\color{black} {X}} {\color{black} {X}}^{-1} D_{n}{D_{n}}^{-1}{\color{black} {X}}\|_{F}  \leq \|\Delta {\color{black} {X}} {\color{black} {X}}^{-1} D_{n}\|_{F}\|{D}^{-1}_{n}{{\color{black} {X}}}\|_{2},$
{\rm we obtain \eqref{2.41}.

Next, we prove \eqref{2.42}. For this, we use \eqref{2.34}}
\begin{align*}
{\color{black} {Q}}^{T} \Delta {\color{black} {Q}} & = {\color{black} {Q}}^{T} \Delta A {\color{black} {X}}^{-1} - \Delta {\color{black} {X}}{\color{black} {X}}^{-1} - {\color{black} {Q}}^{T} \Delta {\color{black} {Q}}\Delta {\color{black} {X}}{\color{black} {X}}^{-1},
\end{align*}
{\rm with the help of the equation \eqref{2.43},}
\begin{align*}
{\color{black} {Q}}^{T} \Delta {\color{black} {Q}} = &{\color{black} {Q}}^{T} \Delta A {\color{black} {X}}^{-1} - {\rm{\color{black}{upx}}}({\color{black} {Q}}^{T} \Delta A {\color{black} {X}}^{-1}) - {\rm{\color{black}{upx}}}({\color{black} {Q}}^{T} \Delta A {\color{black} {X}}^{-1})^{T}\nonumber\\
& - {\rm{\color{black}{upx}}}[{\color{black} {X}}^{-T}({\Delta A}^{T} \Delta A - {\Delta {\color{black} {X}}}^{T} \Delta {\color{black} {X}}){\color{black} {X}}^{-1}] - {\color{black} {Q}}^{T} \Delta {\color{black} {Q}}\Delta {\color{black} {X}}{\color{black} {X}}^{-1}.
\end{align*}
{\rm The above equation can be written as}
\begin{align*}
 {\color{black} {Q}}^{T} \Delta {\color{black} {Q}} = & {\rm{\color{black}{lowx}}}({\color{black} {Q}}^{T} \Delta A {\color{black} {X}}^{-1}) - ({\rm{\color{black}{lowx}}}({\color{black} {Q}}^{T} \Delta A X
 ^{-1}))^{T} - {\rm{\color{black}{upx}}}[{\color{black} {X}}^{-T}({\Delta A}^{T} \Delta A \nonumber\\
  &  - {\Delta {\color{black} {X}}}^{T} \Delta {\color{black} {X}}){\color{black} {X}}^{-1}] - {\color{black} {Q}}^{T} \Delta {\color{black} {Q}}\Delta {\color{black} {X}}{\color{black} {X}}^{-1}.
\end{align*}
{\rm Taking the Frobenius norm and utilizing \eqref{1.12}, we have}
\begin{align}\label{2.45}
\|{\color{black} {Q}}^{T} \Delta {\color{black} {Q}}\|_{F} & \leq \sqrt{2}\|{{\color{black} {Q}}}^{T}\Delta A {{\color{black} {X}}}^{-1}\|_{F} + \frac{1}{\sqrt{2}}{{\|\Delta A {\color{black} {X}}^{-1}\|}_{F}^{2}}  \nonumber\\
& + \frac{1}{\sqrt{2}}{{\|\Delta {\color{black} {X}} {\color{black} {X}}^{-1}\|}_{F}^{2}} + \|{\color{black} {Q}}^{T} \Delta {\color{black} {Q}}\|_{F}\|\Delta {\color{black} {X}}
{\color{black} {X}}^{-1}\|_{F}.
\end{align}
{\rm Now,}
\begin{align*}
& \|\Delta {\color{black} {X}} {\color{black} {X}}^{-1}\|_{F} \leq \frac{1}{\sqrt{2}} (4\|{{\color{black} {Q}}}^{T}\Delta A {\color{black} {X}}^{-1}\|_{F} + 2{\|{{\color{black} {Q}}}^{T}\Delta A {\color{black} {X}}^{-1}\|_{F}^{2}} ).
\end{align*}
{\rm By using $\|{{\color{black} {Q}}}^{T}\Delta A {{\color{black} {X}}}^{-1}\|_{F} \leq \sqrt{3/2} - 1,$ the previous inequality gives}
 \begin{align}\label{2.46}
{\|\Delta {\color{black} {X}} {\color{black} {X}}^{-1}\|_{F}^{2}}  & \leq (5 + 2\sqrt{6}){\|{\color{black} {Q}}^{T}\Delta A {\color{black} {X}}^{-1}\|_{F}^{2}}.
 \end{align}
{\rm By \eqref{2.ab}, \eqref{2.46}, \eqref{2.39}, and \eqref{2.45},  we have}
 \begin{align} \label{2.47}
\|{\color{black} {Q}}^{T} \Delta {\color{black} {Q}}\|_{F}  \leq & \frac{\sqrt{2}\|{{\color{black} {Q}}}^{T}\Delta A {{\color{black} {X}}}^{-1}\|_{F} + (3\sqrt{2} + 2\sqrt{3}){\|{{\color{black} {Q}}}^{T}\Delta A {\color{black} {X}}^{-1}\|_{F}^{2}}}{1 -\|\Delta {\color{black} {X}} X
^{-1}\|_{F} } \nonumber \\
\leq & \frac{\Big(\sqrt{2} + (3\sqrt{2} + 2\sqrt{3})\|{\color{black} {Q}}^{T} \Delta {\color{black} {A}} X^{-1}\|_{F}\Big)\|{\color{black} {Q}}^{T} \Delta {\color{black} {A}} X^{-1}\|_{F}}{1 - \frac{1}{\sqrt{2}}} \nonumber \\
 \leq & \frac{2 + \sqrt{6}}{\sqrt{2 - 1}}\|{\color{black} {Q}}^{T} \Delta {\color{black} {A}} X^{-1} \|_{F}.
\end{align}
{\rm Substituting \eqref{2.47} and \eqref{2.46} into \eqref{2.45}, we obtain}
\begin{align}\label{2.48}
\|{\color{black} {Q}}^{T} \Delta {\color{black} {Q}}\|_{F} \leq &  \|{{\color{black} {Q}}}^{T} \Delta A {{\color{black} {X}}}^{-1}\|_{F} + (3\sqrt{2} + 2\sqrt{3}){\|{\color{black} {Q}}^{T} \Delta {\color{black} {Q}}\|_{F}^{2}}+ \frac{(2 + \sqrt{6})(\sqrt{3} + \sqrt{2})}{\sqrt{2} - 1} {\|{\color{black} {Q}}^{T} \Delta {\color{black} {Q}}\|_{F}}.
\end{align}
{\rm By \eqref{2.38}, we have}
\begin{align}\label{2.49}
\|{\color{black} {Q}}^{T} {\Delta A} {\color{black} {X}}^{-1}\|_{F} \leq \||{\color{black} {Q}}^{T} {\Delta A} {\color{black} {X}}^{-1}|\|_{F} \leq \||{\color{black} {Q}}^{T}|K|{\color{black} {Q}}||{\color{black} {X}}||{\color{black} {X}}^{-1}|\|_{F}\epsilon.
\end{align}
{\rm Using $\|{\color{black} {Q}}^{T}\Delta A {\color{black} {X}}^{-1}\|_{F} \leq \sqrt{3/2} - 1$ and \eqref{2.49}. Then \eqref{2.48} becomes}
\begin{align*}
\|{\color{black} {Q}}^{T} \Delta {\color{black} {Q}}\|_{F} \leq(2\sqrt{2} + 2\sqrt{3} + 2 + \sqrt{6})\||{\color{black} {Q}}^{T} \Delta {\color{black} {Q}}|\|_{F} \leq \||{\color{black} {Q}}^{T}|K|{\color{black} {Q}}||{\color{black} {X}}||{\color{black} {X}}^{-1}|\|_{F}\epsilon.
\end{align*}
{\rm Since $\|\Delta {\color{black} {Q}}\|_{F} = \|{\color{black} {Q}}{\color{black} {Q}}^{T} \Delta {\color{black} {Q}}\|_{F} \leq \|{\color{black} {Q}}\|_{2}\|{\color{black} {Q}}^{T}\Delta {\color{black} {Q}}\|_{F},$ we get \eqref{2.42}.}\\
\end{proof}
{\rm Here, we prove strong rigorous perturbation bounds for factor ${\color{black}{X}}$ of the ${\color{black}{QX}}$ factorization when the perturbation $\Delta A$ takes the form of a backward error caused by the usual ${\color{black}{QX}}-$algorithm, i.e., $\Delta A \in {\mathbb{R}^{m\times n}}$. Additionally, we assume that the condition in equation \eqref{2.38} holds in this case.}
\begin{align}\label{2.50}
 \|\Psi(Z, \Delta A)\|_{F} = & \||G_{{\color{black} {X}}}|{\color{black}{ \rm vec}}(K|{\color{black} {Q}}||{\color{black} {X}}|)\|_{F}\epsilon + \||H_{{\color{black} {X}}}|{\color{black}{ \rm vec}}(|{\color{black} {X}}^{T}||{\color{black} {Q}}^{T}|K^{T}K|{\color{black} {Q}}||{\color{black} {X}}|)\|_{F}\epsilon^{2} + \||H_{{\color{black} {X}}}|\|_{2}\rho^{2} \nonumber \\
\leq & \||G_{{\color{black} {X}}}||{\color{black} {X}}^{T} \otimes I_{m}|\|_{2} \|K|{\color{black} {Q}}|\|_{F}\epsilon + \||H_{{\color{black} {X}}}||{\color{black} {X}}^{T}|\otimes |{\color{black} {X}}^{T}|\|_{2} \||{\color{black} {Q}}^{T}|K^{T}K|{\color{black} {Q}}|\|_{F}\epsilon^{2}+ \||H_{{\color{black} {X}}}|\|_{2}\rho^{2}.
\end{align}
{\rm By \eqref{2.28}, there exists an operator equation \eqref{2.29} associated with the Lyapunov majorant function, $ h(\rho, \epsilon) = \hat{a}\epsilon + \hat{b}\epsilon^{2} + \hat{c}\rho^{2},$ where $\hat{a} = \||G_{{\color{black} {X}}}||{\color{black} {X}}^{T} \otimes I_{m}|\|_{2} \|K|{\color{black} {Q}}|\|_{F}, \quad
\hat{b} = \||H_{{\color{black} {X}}}||{\color{black} {X}}^{T}|\otimes |{\color{black} {X}}^{T}|\|_{2} \||{\color{black} {Q}}^{T}|K^{T}K|{\color{black} {Q}}|\|_{F}, \\
\hat{c} = \||H_{{\color{black} {X}}}|\|_{2}.$ {\rm The Lyapunov majorant equation is given as} $ h(\rho, \epsilon) = \rho$, i.e.,  $\hat{a}\epsilon + \hat{b}\epsilon^{2} + \hat{c}\rho^{2} = \rho.$ {\rm Following the same reasoning as in Theorem \ref{thm2.2}, we find that $\epsilon \in \Omega_{2}$, where}
$\Omega_{2} = \{\epsilon \geq 0 : 1 - 4\hat{c}(\hat{a}\epsilon + \hat{b}\epsilon^{2}) > 0\}.$ The matrix equation \eqref{2.23} admits a solution that lies within the set $\{\mathfrak{B}(\epsilon) = {Z \in \mathbb{U}_{n} : \|Z\|_{F} \leq f_{1}(\epsilon)} \subset \mathbb{R}^{n \times n}\},$ {\rm where $f_{1}(\epsilon)$ is defined as} $f_{1}(\epsilon) := \frac{2(\hat{a}\epsilon + \hat{b}\epsilon^{2})}{1 + \sqrt{1 - 4\hat{c}(\hat{a}\epsilon + \hat{b}\epsilon^{2})}}.$ {\rm Consequently, the inequality $\|\Delta {\color{black} {X}}\|_{F} \leq f_{1}(\epsilon)$ holds for all $\epsilon \in \Omega_{2}$. These results lead to another significant theorem.}}

 \begin{theorem}\label{thm2.4}
Consider the unique ${\color {black}{QX}}$ factorization of $A \in {\mathbb{R}_{n}^{m \times n}}$ as in {\eqref{1.2}}, and $\Delta A \in \mathbb{R}^{m \times n}$ be a perturbation matrix in $A$ such that \eqref{2.38} holds. If
\begin{align}\label{2.51}
\hat{c}(\hat{a}\epsilon + \hat{b}\epsilon^{2}) \le \frac{1}{4},
\end{align}
then $A + \Delta A$ has a unique ${\color {black}{QX}}$ factorization \eqref{2.23} and
\begin{align}
\|\Delta {\color{black} {X}}\|_{F} \leq & \frac{2(\hat{a}\epsilon + \hat{b}\epsilon^{2})}{1 + \sqrt{1 - 4\hat{c}(\hat{a}\epsilon + \hat{b}\epsilon^{2})}}, \label{2.52} \\
\leq & 2\||G_{{\color{black} {X}}}||{\color{black} {X}}^{T} \otimes I_{m}|\|_{2} \|K|{\color{black} {Q}}|\|_{F}\epsilon + 2\||H_{{\color{black} {X}}}||{\color{black} {X}}^{T}|\otimes |X^{T}|\|_{2} \||{\color{black} {Q}}^{T}|K^{T}K|{\color{black} {Q}}|\|_{F}\epsilon^{2},   \label{2.53} \\
\leq  & (\||{\color{black} {X}}|\|_{2} + 2||G_{{\color{black} {X}}}||{\color{black} {X}}^{T} \otimes I_{m}|\|_{2})\|K|{\color{black} {Q}}|\|_{F}\epsilon. \label{2.54}
\end{align}
\end{theorem}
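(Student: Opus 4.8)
The plan is to assemble the fixed-point machinery already prepared in the paragraph preceding the statement and then to successively weaken the resulting estimate. Existence, uniqueness, and the bound \eqref{2.52} I would handle exactly as in Theorem \ref{thm2.2}: the perturbed identity \eqref{2.23} is recast as the operator equation $Z=\Psi(Z,\Delta A)$, whose component-wise estimate \eqref{2.50} yields the Lyapunov majorant $h(\rho,\epsilon)=\hat a\epsilon+\hat b\epsilon^{2}+\hat c\rho^{2}$. Condition \eqref{2.51}, $\hat c(\hat a\epsilon+\hat b\epsilon^{2})\le\frac14$, is precisely the requirement $1-4\hat c(\hat a\epsilon+\hat b\epsilon^{2})\ge0$, so the majorant equation $h(\rho,\epsilon)=\rho$ has the smaller real root $f_{1}(\epsilon)$, and the self-map and contraction argument of Theorem \ref{thm2.2} (with $h'_{\rho}(f_{1}(\epsilon),\epsilon)\le1$) applies verbatim on $\mathfrak{B}(\epsilon)$. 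The Banach fixed-point theorem then gives a unique $\Delta X\in\mathfrak{B}(\epsilon)$, hence a unique $QX$ factorization of $A+\Delta A$, with $\|\Delta X\|_{F}\le f_{1}(\epsilon)$, which is \eqref{2.52}.

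The bound \eqref{2.53} is then immediate: under \eqref{2.51} the radicand is nonnegative, so $1+\sqrt{1-4\hat c(\hat a\epsilon+\hat b\epsilon^{2})}\ge1$ and therefore $f_{1}(\epsilon)\le2(\hat a\epsilon+\hat b\epsilon^{2})$; inserting the definitions of $\hat a$ and $\hat b$ coming from the two terms of \eqref{2.50} produces \eqref{2.53}.

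The only substantial step is \eqref{2.53}$\Rightarrow$\eqref{2.54}, namely absorbing the quadratic term by showing $2\hat b\epsilon^{2}\le\||X|\|_{2}\|K|Q|\|_{F}\,\epsilon$; adding this to $2\hat a\epsilon=2\||G_{X}||X^{T}\otimes I_{m}|\|_{2}\|K|Q|\|_{F}\epsilon$ reproduces exactly the right-hand side of \eqref{2.54}. I would mimic the linearisation \eqref{2.21}$\to$\eqref{2.22} of Theorem \ref{thm2.2}, but here the $\rho^{2}$-coefficient $\hat c=\||H_{X}|\|_{2}$ and the $\epsilon^{2}$-coefficient $\hat b$ no longer coincide, so two separate ingredients are needed. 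First, reading \eqref{2.51} as the quadratic inequality $\hat b\epsilon^{2}+\hat a\epsilon-\frac{1}{4\hat c}\le0$ in $\epsilon$ and taking its positive root gives $2\hat b\epsilon\le-\hat a+\sqrt{\hat a^{2}+\hat b/\hat c}\le\sqrt{\hat b/\hat c}$, the analogue of \eqref{2.32}. Second, using the Kronecker-norm identity \eqref{1.14}, submultiplicativity \eqref{1.3}, the equality $\||X^{T}|\|_{2}=\||X|\|_{2}$, and the observation $|Q^{T}|K^{T}K|Q|=(K|Q|)^{T}(K|Q|)$, I would estimate $\hat b\le\hat c\,\||X|\|_{2}^{2}\,\|K|Q|\|_{2}\|K|Q|\|_{F}\le\hat c\,\||X|\|_{2}^{2}\,\|K|Q|\|_{F}^{2}$. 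Combining the two yields $\sqrt{\hat b/\hat c}\le\||X|\|_{2}\|K|Q|\|_{F}$, hence $2\hat b\epsilon\le\||X|\|_{2}\|K|Q|\|_{F}$, and multiplying by $\epsilon$ gives the desired quadratic-absorption inequality.

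I expect the second ingredient --- bounding $\hat b$ by $\hat c$ times $\||X|\|_{2}^{2}\|K|Q|\|_{F}^{2}$ --- to be the delicate point, since it is exactly where the mismatch between $\hat b$ and $\hat c$ must be reconciled; the factorization $|Q^{T}|K^{T}K|Q|=(K|Q|)^{T}(K|Q|)$ together with \eqref{1.14} is what makes this pass through, and everything else is routine norm bookkeeping.
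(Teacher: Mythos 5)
Your proposal is correct and takes essentially the same route as the paper: the paper's entire proof consists of noting the chain \eqref{2.55}, namely $0 \le 2\hat b\epsilon \le \sqrt{\hat b/\hat c + \hat a^{2}} - \hat a \le (\hat b/\hat c)^{1/2} \le \||X|\|_{2}\,\|K|Q|\|_{F}$, derived from \eqref{2.51} and \eqref{1.3}, which is precisely your quadratic-root reading of \eqref{2.51} combined with your Kronecker-norm estimate $\hat b \le \hat c\,\||X|\|_{2}^{2}\,\|K|Q|\|_{F}^{2}$, with existence, uniqueness, \eqref{2.52}, and \eqref{2.53} delegated (as you do) to the Lyapunov-majorant setup preceding the theorem. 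Your write-up simply fills in the details the paper leaves implicit.
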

\begin{proof}
{\rm Clearly, we only need to show that the bound \eqref{2.54} holds. We notice the fact}
\begin{align}\label{2.55}
0 \leq 2 \hat{b}\epsilon \le \sqrt{\frac{\hat{b}}{\hat{c}}  + \hat{a}^{2}} - \hat{a} \leq \left(\frac{\hat{b}}{\hat{c}}\right)^{\frac{1}{2}} \leq \||{\color{black} {X}}|\|_{2} \|K|{\color{black} {Q}}|\|_{F},
\end{align}
{\rm which can be derived from \eqref{2.51} and \eqref{1.3}.}
\end{proof}
\begin{remark}
{\rm Using \eqref{2.55}, the condition \eqref{2.51} can be simplified to}
\begin{align}\label{2.56}
 \|H_{{\color{black} {X}}}\|_{2}(\||{\color{black} {X}}|\|_{2}\|K|{\color{black} {Q}}|\|_{F} + 2\||G_{{\color{black} {X}}}|{\color{black} {X}}^{T} \otimes I_{m}|\|_{2}\|K|{\color{black} {Q}}|\|_{F})\epsilon \le \frac{1}{2}.
 \end{align}
 \end{remark}
 \begin{remark}
 {\rm From \eqref{2.51}, we have the first-order perturbation bound as given below}
\begin{align}\label{2.57}
\|\Delta {\color{black} {X}}\|_{F} \leq \||G_{{\color{black} {X}}}||{\color{black} {X}}^{T} \otimes I_{m}|\|_{2} \|K|{\color{black} {Q}}|\|_{F}\epsilon + \mathbb{O}(\epsilon^{2}).
\end{align}
\end{remark}
\begin{remark}
{\rm The following rigorous perturbation bounds can be calculated by combining the classic and refined matrix equation approach in} \cite{chang1997pertubation,li2014new},
 \begin{align}\label{2.58}
 \|\Delta {\color{black} {X}}\|_{F} & \leq (\sqrt{6} + \sqrt{3}) (inf_{D_n \in \mathbb{D}_{n}}\sqrt{1 + \varsigma^{2}D_{n}} \|{D^{-1}}_{n}{\color{black} {X}}\|_{2} \||{\color{black} {X}}||{\color{black} {X}}^{-1}|D_{n}\|_{2} ) \|K|{\color{black} {Q}}|\|_{F}\epsilon,
 \end{align}
{\rm under the condition}
 \begin{align}\label{2.59}
  \||{\color{black} {X}}||{\color{black} {X}}^{-1}|\|_{2} \|K|{\color{black} {Q}}|\|_{F}\epsilon \le \sqrt{3/2} - 1.
\end{align}
\end{remark}
{\rm We know that the bound \eqref{2.58} can be much tighter than \eqref{2.2}. It is clear that $\||{\color{black} {Q}}^{T}|K|{\color{black} {Q}}|\|_{F} \leq \||{\color{black} {Q}}^{T}|\|_{2} \|K|{\color{black} {Q}}|\|_{F}$}.


\section{Condition numbers}\label{sec.3}
We need to consider two mappings from Definition \ref{dfn2.2} and Lemma \ref{lem2.1} to derive the explicit expressions for the condition numbers, i.e.,
\begin{align}\label{3.1}
& \varphi_1: a ={\rm {\color{black}{vec}}}(A) \rightarrow {\rm {\color{black}{ \rm xvec}}}({\color{black} {X}}), \nonumber\\
& \varphi_2: a ={\rm {\color{black}{vec}}}(A) \rightarrow {\rm {\color{black}{vec}}}({\color{black} {Q}}),
\end{align}
and using the following first-order approximations
 \begin{align*}
{\rm {\color{black}{ \rm xvec}}}(\Delta {\color{black} {X}}) = G_{{\color{black} {X}}} {\rm {\color{black}{vec}}}(\Delta A) + \text{h.o.t.}, \quad
{\rm {\color{black}{vec}}}(\Delta {\color{black} {Q}})  = G_{{\color{black} {Q}}}{\rm {\color{black}{vec}}}(\Delta A) + \text{h.o.t.},
\end{align*}
which are from \eqref{2.28} and \eqref{2.35}, respectively.

\subsection{Mixed and component-wise condition numbers}
It is well-known that norm-wise condition numbers fail to adequately capture the effects of scaling or sparsity in data. When data is improperly scaled or contains numerous zero entries, norm-wise condition numbers may not accurately reflect the magnitude of errors in these smaller or zero components. To address these issues, we explore mixed and component-wise condition numbers. Considering the general case of equation \eqref{3.1}, the definitions of mixed and component-wise condition numbers for the ${\color{black}{QX}}$ factorization are as follows:
\begin{align*}
& m_{{\color{black} {X}}}(\varphi_{{\color{black} {X}}},x)=\lim_{\epsilon \rightarrow 0}\sup_{\substack{|\Delta A|\leq \epsilon|A|}}\frac{\|\Delta {\color{black} {X}}{\color{black} {X}}\|_{\max}}{d(x+\Delta x,x)\|{\color{black} {X}}\|_{\max}}=m_{{\color{black} {X}}}, \\
&  m_{{\color{black} {Q}}}(\varphi_{{\color{black} {Q}}},x)=\lim_{\epsilon\rightarrow 0}\sup_{\substack{|\Delta A|\leq \epsilon|A|}}\frac{\|\Delta {\color{black} {Q}}\|_{\max}}{d(x+\Delta x,x)\|{\color{black} {Q}}\|_{\max}}=m_{{\color{black} {Q}}}, \\
& c_{{\color{black} {X}}}(\varphi_{{\color{black} {X}}},x)=\lim_{\epsilon\rightarrow 0}\sup_{\substack{|\Delta A|\leq \epsilon|A|}}\frac{1}{d(x+\Delta x,x)}\|\Delta {\color{black} {X}}/{\color{black} {X}}\|_{\max}=c_{{\color{black} {X}}}, \\
& c_{{\color{black} {Q}}}(\varphi_{{\color{black} {Q}}},x)=\lim_{\epsilon\rightarrow 0}\sup_{\substack{|\Delta A|\leq \epsilon|A|}}\frac{1}{d(x+\Delta x,x)}\|\Delta {\color{black} {Q}}/{\color{black} {Q}}\|_{\max}=c_{{\color{black} {Q}}}.
\end{align*}
where $ x = {\rm {\color{black}{vec}}}(A),$  and for a matrix $A$, $\|A\|_{\max}=\|{\rm {\color{black}{vec}}}(A)\|_{\infty}=\mathop {\max }\limits_{i,j} |a_{ij}|.$ Taking into account Lemma \ref{lem2.1} and the preceding equations, the following theorem provides the explicit expressions for the mixed and component-wise condition numbers for both factors in the ${\color {black}{QX}}$ factorization.
\begin{theorem}\label{thm3.2}
Assume that the conditions in Theorem \ref{thm2.1} hold. Then, the mixed and component-wise condition numbers for the factors ${\color{black} {X}}$ and ${\color{black} {Q}}$ are given by:
\begin{align}
  m_{{\color{black} {X}}} = \frac{\||G_{{\color{black} {X}}}|{\rm {\color{black}{vec}}}(|A|)\|_{\infty}}{\|{\color{black} {X}}\|_{\max}}, \quad\quad\quad &  c_{{\color{black} {X}}}= { \left\| \frac{|G_{{\color{black} {X}}}|{\rm {\color{black}{vec}}}(|A|)}{|{\color{black} {X}}|} \right\|}_{\infty}, \label{3.2} \\
  m_{{\color{black} {Q}}} = \frac{\||G_{{\color{black} {Q}}}|{\rm {\color{black}{vec}}}(|{\color{black} {Q}}|)\|_{\infty}}{\|{\color{black} {Q}}\|_{\max}},  \quad\quad\quad &  c_{{\color{black} {Q}}}={ \left\| \frac{|G_{{\color{black} {Q}}}|{\rm {\color{black}{vec}}}(|A|)}{|{\color{black} {Q}}|} \right\|}_{\infty}. \label{3.3}
\end{align}
\end{theorem}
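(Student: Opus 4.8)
The plan is to deduce both condition numbers directly from Lemma \ref{lem2.1}, once I have checked that the two maps $\varphi_{1}$ and $\varphi_{2}$ of \eqref{3.1} are Fr\'echet differentiable at $x=\mathrm{vec}(A)$ and identified their derivatives. Since $A$ has full column rank, the thin ${\color{black}{QX}}$ factorization depends analytically on the entries of $A$ in a neighbourhood of $A$, so $\varphi_{1}$ and $\varphi_{2}$ are Fr\'echet differentiable there. The first-order expansions already recorded just before Section \ref{sec.3}, namely $\mathrm{xvec}(\Delta {\color{black} {X}}) = G_{{\color{black} {X}}}\,\mathrm{vec}(\Delta A)+\mathrm{h.o.t.}$ and $\mathrm{vec}(\Delta {\color{black} {Q}}) = G_{{\color{black} {Q}}}\,\mathrm{vec}(\Delta A)+\mathrm{h.o.t.}$ (coming from \eqref{2.28} and \eqref{2.35}), identify the Fr\'echet derivatives upon comparison with the definition of differentiability: $D\varphi_{1}(x)=G_{{\color{black} {X}}}$ and $D\varphi_{2}(x)=G_{{\color{black} {Q}}}$.

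With the derivatives in hand I substitute into the two formulas of Lemma \ref{lem2.1}. Because $x=\mathrm{vec}(A)$, the vector $|x|$ appearing there is exactly $\mathrm{vec}(|A|)$, so the numerators become $|G_{{\color{black} {X}}}|\,\mathrm{vec}(|A|)$ and $|G_{{\color{black} {Q}}}|\,\mathrm{vec}(|A|)$. For the mixed numbers the denominators are $\|\varphi_{1}(x)\|_{\infty}=\|\mathrm{xvec}({\color{black} {X}})\|_{\infty}$ and $\|\varphi_{2}(x)\|_{\infty}=\|\mathrm{vec}({\color{black} {Q}})\|_{\infty}$, while for the component-wise numbers the entry-wise divisions are by $|\varphi_{1}(x)|=|\mathrm{xvec}({\color{black} {X}})|$ and $|\varphi_{2}(x)|=|\mathrm{vec}({\color{black} {Q}})|$, interpreted through the division convention of Section \ref{sec.1}. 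This already reproduces the shapes of \eqref{3.2} and \eqref{3.3}; what remains is to rewrite the denominators in the stated $\max$-norm form.

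For the ${\color{black} {Q}}$-factor this rewriting is immediate, since $\mathrm{vec}$ merely stacks the columns of ${\color{black} {Q}}$, giving $\|\mathrm{vec}({\color{black} {Q}})\|_{\infty}=\|{\color{black} {Q}}\|_{\max}$ and $|\mathrm{vec}({\color{black} {Q}})|=\mathrm{vec}(|{\color{black} {Q}}|)$ entry-wise, which yields \eqref{3.3}. The one genuine point to verify, and the step I expect to be the main obstacle, is the identification $\|\mathrm{xvec}({\color{black} {X}})\|_{\infty}=\|{\color{black} {X}}\|_{\max}$. Here I would argue as follows: the matrix $M_{\mathrm{xvec}}$ is a $0/1$ selection matrix with orthonormal rows (each carrying a single unit entry, consistent with $M_{\mathrm{xvec}}M_{\mathrm{xvec}}^{T}=I_{\tau_{1}}$ in \eqref{1.8}), so $\mathrm{xvec}({\color{black} {X}})$ extracts entries of $\mathrm{vec}({\color{black} {X}})$ without any scaling, whence $\|\mathrm{xvec}({\color{black} {X}})\|_{\infty}\le\|{\color{black} {X}}\|_{\max}$. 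Conversely, since ${\color{black} {X}}$ is an ${\color{black} {X}}$-type centrosymmetric matrix, every entry of ${\color{black} {X}}$ either lies in the double-cone support selected by $\mathrm{xvec}$ or, by the centrosymmetry relation $x_{\alpha,\beta}=x_{n-\alpha+1,n-\beta+1}$, equals in magnitude one that is; hence a maximum-magnitude entry of ${\color{black} {X}}$ always occurs among the components of $\mathrm{xvec}({\color{black} {X}})$, giving the reverse inequality. Combining the two bounds yields $\|\mathrm{xvec}({\color{black} {X}})\|_{\infty}=\|{\color{black} {X}}\|_{\max}$, and the same selection shows $|\mathrm{xvec}({\color{black} {X}})|$ carries every distinct value of $|{\color{black} {X}}|$; substituting these into the Lemma \ref{lem2.1} expressions produces \eqref{3.2}, with $|{\color{black} {X}}|$ in the quotient understood as $|\mathrm{xvec}({\color{black} {X}})|$. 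This completes the argument.
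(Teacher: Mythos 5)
Your proposal is correct and follows essentially the same route as the paper: the paper offers no detailed proof of Theorem \ref{thm3.2} beyond invoking Lemma \ref{lem2.1} with the Fr\'{e}chet derivatives $G_{X}$ and $G_{Q}$ read off from the first-order expansions \eqref{2.28} and \eqref{2.35}, and your extra verification that $\|\mathrm{xvec}(X)\|_{\infty}=\|X\|_{\max}$ (via the $0/1$ selection structure of $M_{\mathrm{xvec}}$ together with centrosymmetry of the double-cone support) simply makes explicit a step the paper leaves tacit. One remark: your derivation yields $m_{Q}=\||G_{Q}|\,\mathrm{vec}(|A|)\|_{\infty}/\|Q\|_{\max}$, with $\mathrm{vec}(|A|)$ rather than the $\mathrm{vec}(|Q|)$ printed in \eqref{3.3}; the printed numerator is evidently a typo in the paper, since Lemma \ref{lem2.1} requires $|x|=\mathrm{vec}(|A|)$ and the corresponding upper bound \eqref{3.6} is likewise built from $|A|$, so your version is the correct one.
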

Below are the upper bounds for the mixed and component-wise condition numbers, as the matrices in \eqref{3.2} and \eqref{3.3} are quite large and computationally expensive to compute.
\begin{corollary}
Using the same hypotheses as in Theorem \ref{thm3.2}, we have
\begin{align}
& m_{{\color{black} {X}}} \leq \frac{\|{\rm {\color{black}{upx}}} ((|{\color{black} {X}}^{-T}||A^{T}||{\color{black} {Q}}|) + (|{\color{black} {Q}} ^{T}||A||{\color{black} {X}}^{-1}|))|{\color{black} {X}}|\|_{\max}}{\|{\color{black} {X}}\|_{\max}} = {m_{{\color{black} {X}}}^{upp}}(A),   \label{3.4}\\
& c_{{\color{black} {X}}} \leq  \frac{\|{\rm {\color{black}{upx}}} ((|{\color{black} {X}}^{-T}||A^{T}||{\color{black} {Q}}|) (|{\color{black} {Q}}^{T}||A||{\color{black} {X}}^{-1}|))|{\color{black} {X}}|\|_{\max}}{\|{\color{black} {X}}\|_{\max}} = {c_{{\color{black} {X}}}^{upp}}(A),      \label{3.5}\\
& m_{{\color{black} {Q}}} \leq \frac{\||{\color{black} {X}}^{-1}||A| + |{\color{black} {Q}}|{\rm {\color{black}{upx}}} ((|{\color{black} {X}}^{-T}||A^{T}||{\color{black} {Q}}|) + (|{\color{black} {Q}}^{T}||A||{\color{black} {X}}^{-1}|))\|_{\max}}{\|{\color{black} {Q}}\|_{\max}} = {m_{{\color{black} {Q}}}^{upp}}(A), \label{3.6}
\end{align}
and
\begin{align} & c_{{\color{black} {Q}}} \leq \left\| \frac{|{\color{black} {X}}^{-1}||A| + |{\color{black} {Q}}|{\rm {\color{black}{upx}}} ((|{\color{black} {X}}^{-T}||A^{T}||{\color{black} {Q}}|) + (|{\color{black} {Q}}^{T}||A||{\color{black} {X}}^{-1}|))}{|{\color{black} {Q}}|}  \right\|_{\max} = {c_{{\color{black} {Q}}}^{upp}}(A). \label{3.7}
\end{align}
\end{corollary}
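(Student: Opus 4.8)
The plan is to start from the exact expressions \eqref{3.2} and \eqref{3.3}, which Theorem \ref{thm3.2} produces by applying Lemma \ref{lem2.1} to the maps $\varphi_1,\varphi_2$ with Fr\'echet derivatives $G_{X}$ and $G_{Q}$. These formulas are exact but involve the large Kronecker-structured matrices $|G_{X}|$ and $|G_{Q}|$; the sole purpose of the corollary is to majorize $|G_{X}|\,{\rm vec}(|A|)$ and $|G_{Q}|\,{\rm vec}(|A|)$ entrywise by a quantity assembled directly from $|A|$, $|Q|$, $|X|$ and $|X^{-1}|$, without ever forming a Kronecker product. So the target is an entrywise inequality of the form $|G_{X}|\,{\rm vec}(|A|)\le{\rm xvec}(W_{X})$ for a nonnegative centrosymmetric matrix $W_{X}$, after which taking $\|\cdot\|_{\infty}$ and dividing by $\|X\|_{\max}=\|{\rm xvec}(X)\|_{\infty}$ (respectively dividing entrywise by $|{\rm xvec}(X)|$) gives \eqref{3.4}--\eqref{3.5}, and the same scheme applied to $G_{Q}$ gives \eqref{3.6}--\eqref{3.7}.

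The core step is to push absolute values through the factored form $G_{X}=M_{\mathrm{xvec}}(X^{T}\otimes I_{n})M_{\mathrm{upx}}[(X^{-T}\otimes Q^{T})+(Q^{T}\otimes X^{-T})\Pi_{m,n}]$. I would combine three elementary facts: the entrywise submultiplicativity $|AB|\le|A||B|$, the identity $|A\otimes B|=|A|\otimes|B|$, and the observation that $M_{\mathrm{xvec}}$, $M_{\mathrm{upx}}$ and $\Pi_{m,n}$ are nonnegative (being averaging, selection and permutation matrices), so that $|M_{\mathrm{xvec}}|=M_{\mathrm{xvec}}$, $|M_{\mathrm{upx}}|=M_{\mathrm{upx}}$, $|\Pi_{m,n}|=\Pi_{m,n}$. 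Together these yield
\[
|G_{X}|\,{\rm vec}(|A|)\le M_{\mathrm{xvec}}(|X^{T}|\otimes I_{n})M_{\mathrm{upx}}\big[(|X^{-T}|\otimes|Q^{T}|)+(|Q^{T}|\otimes|X^{-T}|)\Pi_{m,n}\big]{\rm vec}(|A|).
\]

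To turn the right-hand side back into an explicit matrix, I would unwind the Kronecker actions using \eqref{1.13} and \eqref{1.17}: the bracket sends ${\rm vec}(|A|)$ to ${\rm vec}\big(|Q^{T}||A||X^{-1}|+|X^{-T}||A^{T}||Q|\big)$, the factor $M_{\mathrm{upx}}$ then applies ${\rm upx}(\cdot)$ (here using $|{\rm upx}(C)|={\rm upx}(|C|)$, valid because ${\rm upx}$ only rescales and selects entries by nonnegative factors), the factor $(|X^{T}|\otimes I_{n})$ multiplies the result on the right by $|X|$, and $M_{\mathrm{xvec}}$ finally applies ${\rm xvec}(\cdot)$. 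This identifies $W_{X}={\rm upx}\big(|X^{-T}||A^{T}||Q|+|Q^{T}||A||X^{-1}|\big)|X|$. Because every matrix appearing is centrosymmetric, ${\rm xvec}$ collects all the distinct entries, so $\|{\rm xvec}(W_{X})\|_{\infty}=\|W_{X}\|_{\max}$; dividing by $\|X\|_{\max}$ produces exactly $m_{X}^{upp}(A)$ in \eqref{3.4}, while keeping the entrywise division of the majorized numerator by $|{\rm xvec}(X)|$ gives the component-wise bound \eqref{3.5}.

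The $Q$-factor bounds \eqref{3.6}--\eqref{3.7} follow the identical recipe applied to $G_{Q}$, the only new feature being the leading term $({X}^{-T}\otimes I)$, which contributes the extra summand $|X^{-1}||A|$ together with an outer factor $|Q|$ in front of the ${\rm upx}$ block, and where the denominator becomes $\|Q\|_{\max}$ (respectively $|{\rm vec}(Q)|$). I expect the main obstacle to lie entirely in the bookkeeping of the structured operators under absolute values: specifically, verifying rigorously that $|M_{\mathrm{upx}}\,{\rm vec}(C)|={\rm vec}({\rm upx}(|C|))$ and that the passage from $\|{\rm xvec}(\cdot)\|_{\infty}$ to $\|\cdot\|_{\max}$ is an equality rather than a mere inequality, which hinges on the argument being centrosymmetric. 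Once those structural identities are established, each bound is a direct composition of \eqref{1.13}, \eqref{1.17} and the three absolute-value rules, with no additional estimation required.
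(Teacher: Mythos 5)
Your proposal is correct and follows essentially the same route as the paper's own proof: starting from the exact expressions \eqref{3.2}--\eqref{3.3}, pushing absolute values through the factored Kronecker form of $G_{{\color{black} {X}}}$ (resp.\ $G_{{\color{black} {Q}}}$) using the nonnegativity of $M_{\rm {\color{black}{ \rm xvec}}}$, $M_{\rm {\color{black}{upx}}}$ and $\Pi_{m,n}$, unwinding the result back into matrix form via \eqref{1.13} and \eqref{1.17}, and then comparing $\|{\rm {\color{black}{ \rm xvec}}}(\cdot)\|_{\infty}$ with $\|\cdot\|_{\max}$. The only cosmetic difference is that you assert equality $\|{\rm {\color{black}{ \rm xvec}}}(W_{{\color{black} {X}}})\|_{\infty}=\|W_{{\color{black} {X}}}\|_{\max}$ where the paper only uses the inequality $\leq$, which is all that is needed and is the safer claim.
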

\begin{proof}
 The bound \eqref{3.4} can be derived from \eqref{3.2}, we therefore have
  \begin{align*}
& \||M_{\rm {\color{black}{ \rm xvec}}}(X
^{T}\otimes I_{n})M_{{\rm {\color{black}{upx}}}}[({\color{black} {X}}^{-T}\otimes I_{n}) + (I_{n}\otimes {\color{black} {X}}^{-T} )\Pi_{m,n}]| {\rm {\color{black}{vec}}}(|A|)\|_{\infty} \\
& \leq \|M_{\rm {\color{black}{ \rm xvec}}}(|X
^{T}|\otimes |I_{n}|)M_{{\rm {\color{black}{upx}}}}[(|{\color{black} {X}}^{-T}|\otimes |I_{n}|) + (|I_{n}|\otimes |{\color{black} {X}}^{-T}| )\Pi_{m,n}]| {\rm {\color{black}{vec}}}(|A|)\|_{\infty} \\
& = \|{\rm {\color{black}{ \rm xvec}}}(|I_{n}|{\rm {\color{black}{upx}}}((|{\color{black} {X}}^{-T}||A^{T}||{\color{black} {Q}}|) + (|{\color{black} {Q}}^{T}||A||{\color{black} {X}}^{-1}|))|{\color{black} {X}}|)\|_{\infty} \\
& \leq \|{\rm {\color{black}{upx}}}((|{\color{black} {X}}^{-T}||A^{T}||{\color{black} {Q}}|) + (|{\color{black} {Q}}^{T}||A||{\color{black} {X}}^{-1}|))|{\color{black} {X}}|\|_{\max}.
 \end{align*}
Likewise, we can acquire \eqref{3.5}, \eqref{3.6}, and \eqref{3.7}.
\end{proof}

\section{Numerical Experiments}\label{sec.4}
In this section, we provide numerical examples to illustrate the results derived in previous sections. All computations are carried out in MATLAB, R2023a.
\begin{example}\label{ex1}
{\rm When \( m > n \), we first generate a centrosymmetric matrix \( A \in R^{m \times n} \) using the MATLAB function \texttt{centro\_generator.m}, which is derived from \texttt{centrocell.m} and can be downloaded from the Bitbucket repository: \href{https://bitbucket.org/ajsteele/centrosymmetric-qx/downloads/}{https://bitbucket.org/ajsteele/centrosymmetric-qx/downloads/}, as described in \cite{lv2022perturbation}. For the scaling matrix in \eqref{2.2}, as done in \cite{lv2022perturbation}, and also for \eqref{2.41}, we choose \( D_{n} = D_{r} \), such that \( D_{r} = {\rm{diag}}\|{\color{black} {X}}(i,:)\|_{2} \). For the scaling matrix in \eqref{2.3} and \eqref{2.42}, we  consider \( D_{n} = I_{n} \). To compute the rigorous perturbation bounds of the factor \( {\color{black} {X}} \), we use \eqref{2.2}, \eqref{2.22}, \eqref{2.41}, and \eqref{2.54}, and for the factor \( {\color{black} {Q}} \), we use \eqref{2.3} and \eqref{2.42}. The numerical tests for \( m \ge n \) are shown in Table \ref{table1}, and the same values are shown in Table \ref{table2} and Table \ref{table3}. From Tables \ref{table1}, \ref{table2}, and \ref{table3}, we can quickly observe that the bounds obtained using the modified matrix equation approach for the factor \( {\color{black} {X}} \) are always sharper than the bounds obtained using the refined matrix equation approach when the original matrix is perturbed norm-wise or component-wise. For the factor \( {\color{black} {Q}} \), we can see from the tables that the bounds obtained using the refined matrix equation approach are sharper than those obtained using the modified matrix equation approach.
}
\end{example}
For simplicity, we use the following notation:
\begin{align*}
& b_{\Delta {\color{black} {X}}1} = (\||{\color{black} {X}}|\|_{2} + 2||G_{{\color{black} {X}}}|{\color{black} {X}}^{T} \otimes I_{m}||\|_{2})\|K|{\color{black} {Q}}|\|_{F}\epsilon, \\
& b_{\Delta {\color{black} {X}}2} = (\sqrt{3} + \sqrt{6}) \left( \mathop {\inf}\limits_{D_{n} \in \mathbb{D}_n}\||{\color{black} {X}}||{\color{black} {X}}^{-1}|D_{n}\|_{2} \|{{D}_{n}}^{-1}{\color{black} {X}}\|_{2}\right)\||{\color{black} {Q}}^{T}| K|{\color{black} {Q}}|\|_{F}\epsilon,\\
& b_{\Delta {\color{black} {X}}3} =(1 + 2\|G_{{\color{black} {X}}}\|_{2}), \\
& b_{\Delta {\color{black} {X}}4} = (\sqrt{6} + \sqrt{3}) \left(\mathop {\inf}\limits_{D_{n} \in \mathbb{D}_n} \sqrt{1 + {\varsigma^{2}}_{D_{n}}}\kappa_{2} (D^{-1}{\color{black} {X}})\right) \|{\color{black} {Q}}\|_{2}\|\Delta A\|_{F}, \\
& b_{\Delta {\color{black} {Q}}1} =  (\sqrt{6} + 2 + 2\sqrt{2} + 2\sqrt{3}) \||{\color{black} {Q}}^{T}| K|{\color{black} {Q}}|\|_{F}cond({\color{black} {X}}) \epsilon, \\
& b_{\Delta {\color{black} {Q}}2} = \left( 2 + \sqrt{2} \right)\left[ \|G_{{\color{black} {Q}}}\|_{2} + \|{\color{black} {X}}^{-T} \otimes {\color{black} {Q}}\|_{2} (1 + \|G_{{\color{black} {X}}}\|_{2}) \right]\|\Delta A\|_{F},\\
& b_{\Delta {\color{black} {Q}}3} = (2\sqrt{2} + 2) \left ( \mathop {\inf}\limits_{D_{n} \in \mathbb{D}_n} \|{\color{black} {Q}}{D^{-1}}_{n}\|_{2}\|{\color{black} {X}}^{-1}D_{n}\|_{2}\right) \|{\color{black} {Q}}\|_{2} \|\Delta A\|_{F} + (2\sqrt{3} + \sqrt{6})\|{\color{black} {Q}}^{T} \Delta A {\color{black} {X}}^{-1}\|_{F}.
\end{align*}
\begin{table}[H]
	
	\caption[Table : 1]{ Rigorous perturbation bounds when \textit{m} $>$ \textit{n}}
	\centering
\scriptsize
\footnotesize\setlength{\tabcolsep}{4pt}
	\label{table1}
\scalebox{0.7}{
	\begin{tabular}{c c c c c c c c c c c c c}
		\hline
		$m, n$ & $\|\Delta A\|_{F}$ & $\|\Delta {\color{black} {X}}\|_{F}$ & $\|\Delta {\color{black} {Q}}\|_{F}$ & $\|{\color{black} {Q}}^{T}\Delta {\color{black} {Q}}\|_{F}$ &{\rm cond}(A)& $b_{\Delta {\color{black} {X}}1}$ &$b_{\Delta {\color{black} {X}}2}$& $b_{\Delta {\color{black} {X}}3}$&$b_{\Delta {\color{black} {X}}4}$ &$b_{\Delta {\color{black} {Q}}1}$ &$b_{\Delta {\color{black} {Q}}2}$ & $b_{\Delta {\color{black} {Q}}3}$ \\
		\hline\hline
20,10 &7.9925e-07 &8.3509e-07 &4.6451e-07 &2.4495e-07 &23.9970 &1.0294e+03 &2.4396e+03 &12.4659	&84.8637 &1.7351e+03 &76.4609 &30.9115\\
30,20 &1.3556e-07 &1.4565e-07 &6.6223e-08 &5.2237e-08 &31.7693 &6.8193e+03 &1.2624e+04 &12.6827 &96.8450 &8.2824e+03 &72.7588 &28.8308\\
40,20 &1.6137e-08	&1.5706e-08	&6.0011e-09	&3.7952e-09	&20.7983 &9.5586e+03 &1.4578e+04 &10.3463 &71.4964 &8.7992e+03 &33.5324 &15.5300\\
50,30 &2.2429e-09	&2.3070e-09	&7.4221e-10	&4.7902e-10	&53.8789 &3.7532e+04 &5.9398e+04 &12.8052 &107.9084 &3.4753e+04 &72.6922 &28.8775\\
60,30 &2.5060e-10	&2.5264e-10	&7.4927e-11	&4.9649e-11	&25.9020 &3.8596e+04 &5.9268e+04 &14.4068 &126.4451 &2.4300e+04 &37.2434 &13.2672\\
70,40 &3.0597e-11	&3.1500e-11	&8.7768e-12	&6.1222e-12	&37.8395 &1.0816e+05 &1.5975e+05 &16.2680 &157.6286 &6.2049e+04 &46.7614 &15.1576\\
150,50&4.9729e-12	&4.8168e-12	&8.5410e-13	&4.6336e-13	&27.1203 &4.7770e+05 &5.7888e+05 &15.8903 &167.2820 &1.3110e+05 &20.3381 &6.6799\\
200,60 &6.3359e-13	&6.1617e-13	&9.2144e-14	&4.8274e-14	&31.3745 &1.0840e+06 &1.2799e+06 &18.3472 &191.6962 &2.2883e+05 &20.8784 &6.0735\\
300,100 &9.9996e-14	&9.2731e-14	&1.4508e-14	&7.8925e-15	&39.4837 &6.5352e+06 &6.8403e+06 &23.2079 &270.1693 &9.4562e+05 &20.4557 &4.8572\\

		\hline
	\end{tabular}
}
\end{table}

\begin{table}[H]
	
	\caption[Table:2 ]{Rigorous perturbation bounds \textit{m} = \textit{n}}
	\centering
\scriptsize
\footnotesize\setlength{\tabcolsep}{4pt}
	\label{table2}
\scalebox{0.7}{
	\begin{tabular}{c c c c c c c c c c c c}
		\hline
$m=n$ & $\|\Delta A\|_{F}$ & $\|\Delta {\color{black} {X}}\|_{F}$ & $\|\Delta {\color{black} {Q}}\|_{F}$ &cond(A)& $b_{\Delta X
1}$ &$b_{\Delta {\color{black} {X}}2}$& $b_{\Delta {\color{black} {X}}3}$ & $b_{\Delta {\color{black} {X}}4}$&$b_{\Delta {\color{black} {Q}}1}$ &$b_{\Delta {\color{black} {Q}}2}$ &$b_{\Delta {\color{black} {Q}}3}$\\
		\hline\hline

10 &5.9910e-08	&6.472e-08	&3.3599e-08	&45.8302	&280.3398	&522.5655	&7.9049 	&49.1338	&1.6848e+03	&162.4038	&96.3765\\
10 &5.8900e-09	&7.1522e-09	&1.5145e-08	&87.5321	&372.2899	&1.0665e+03	&19.7034	&149.1569	&2.2712e+03	&741.1668	&200.0122\\
20 &1.1634e-09	&1.4375e-09	&1.7169e-09	&305.7068	&7.8718e+03	&2.0687e+04	&34.9743	&200.2455	&4.8283e+04	&1.9074e+03	&316.2860\\
20 &1.0920e-10	&1.3094e-10	&1.0013e-10	&309.0977	&7.2827e+03	&2.2529e+04	&38.0592	&274.6799	&3.8752e+04	&2.0729e+03	&315.3371\\
30 &1.7211e-10	&1.9684e-10	&1.1781e-10	&738.6378	&2.2388e+04	&5.7245e+04	&22.9532	&182.9586	&2.7194e+05	&2.2332e+03	&542.5035\\
30 &1.7116e-11	&1.9622e-11	&9.2663e-12	&243.8271	&1.8877e+04	&4.2571e+04	&14.6777	&163.3906	&7.0333e+06	&487.1888	&171.5523\\
100 &5.7964e-12	&6.7628e-12	&2.2180e-12	&3.2634e+03	&2.2522e+06	&5.6042e+06	&31.4992	&491.5955	&1.0288e+07	&3.7650e+03	&697.5794\\
110 &6.3472e-13	&7.3039e-13	&2.9028e-13	&1.3770e+03	&3.1235e+06	&6.8006e+06	&29.0173	&342.0829	&7.5001e+06	&1.3805e+03	&265.8052\\
120 &6.9589e-14	&1.0310e-13	&3.9503e-14	&2.9060e+03	&5.1020e+06	&1.6126e+07	&62.9860	&1.0971e+03	&1.5415e+07	&5.4084e+03	&513.7461\\

		\hline
	\end{tabular}
}
\end{table}

{\rm Presently, the MATLAB functions $\textbf{toeplitz(s)}$, and $s = \textbf{randn(n,1)}$, will be used to generate a centrosymmetric matrix $A \in {\mathbb{R}_{n}}^{m\times n}$. As mentioned earlier, numerical results comparing the perturbation bounds for the ${\color{black}{QX}}$ factorization can be obtained, and these are also presented in Table \ref{table3}.}

\begin{table}[H]	

	\caption[Table:3 ]{Rigorous perturbation bounds for the case \textit{m} = \textit{n}}
	\centering
\scriptsize
\footnotesize\setlength{\tabcolsep}{4pt}
	\label{table3}
\scalebox{0.7}{
	\begin{tabular}{c c c c c c c c c c c c c }
		\hline
		$n$& $\|\Delta A\|_{F}$ & $\|\Delta {\color{black} {X}}\|_{F}$ & $\|\Delta {\color{black} {Q}}\|_{F}$ & cond(A)& $b_{\Delta {\color{black} {X}}1}$ &$b_{\Delta X
2}$& $b_{\Delta {\color{black} {X}}3}$ & $b_{\Delta {\color{black} {X}}4}$&$b_{\Delta {\color{black} {Q}}1}$ &$b_{\Delta {\color{black} {Q}}2}$ &$b_{\Delta {\color{black} {Q}}3}$\\
		\hline\hline
10 &8.9523e-08	&1.211e-07	&1.0172e-07	&15.5083	&308.5180	&580.0241	&9.4198	&45.6652	 &604.4170	 &69.0286	&35.2370\\
10 &1.1185e-08	&1.2780e-08	&5.6063e-09	&14.8278	&258.5796	&495.6520	&7.9020	&48.9284	 &579.1363	 &57.3936	&33.2366\\
20 &1.6249e-09	&2.9381e-09	&1.6610e-09	&37.5039	&5.2980e+03	&1.1127e+04	&21.3719	&97.3340	 &1.1487e+04	 &260.3394	&65.0442\\
20 &1.5612e-10	&2.6547e-10	&1.1922e-10	&42.5995	&6.1135e+03	&9.8811e+03	&33.8789	&143.4470  &7.0400e+03	&312.5556	&52.2861\\
30 &2.7758e-10	&4.3753e-10	&1.1513e-10	&13.9552	&1.6209e+04	&2.0508e+04	&11.4808	&49.9367	  &9.8642e+03	&41.5541	&17.0939\\
30 &3.1889e-11	&4.3933e-11	&9.0485e-12	&26.8157	&1.9790e+04	&3.1505e+04	&10.3940	&54.6332	  &1.6053e+04	&47.8899	&22.6328\\
100 &9.3497e-12	&1.5348e-11	&2.9394e-12	&108.4877	&1.9230e+06	&3.0991e+06	&25.9406	&229.9995	&1.0345e+06	&233.0123	&49.2660\\
110 &1.1719e-12	&2.0085e-12	&2.7850e-13	&61.5498	&3.4262e+06	&4.2386e+06	&22.8658	&152.0776	&8.7795e+05	&97.4466	&23.0451\\
120 &1.1547e-13	&2.1338e-13	&7.3607e-14	&272.2991	&3.5249e+06	&5.6141e+06	&30.5836	&208.1417	&4.2193e+06	&776.3241	&147.4020\\

		\hline
	\end{tabular}
}
\end{table}

\begin{example}
{\rm We used the test matrices from Example \ref{ex1} to calculate condition numbers and their upper bounds. The numerical results are shown in Tables \ref{table4} and \ref{table5}. As a result, we used \(k_{{\color{black} {X}}}(A)^{\text{upper}}\) and \(k_{{\color{black} {Q}}}(A)^{\text{upper}}\) to calculate the maximum possible magnification of the error for the factors \({\color{black} {X}}\) and \({\color{black} {Q}}\), respectively, with respect to a minor change in \(A\), because the upper bounds of the condition numbers for the factors \({\color{black} {X}}\) and \({\color{black} {Q}}\) are always good estimates.
}
\end{example}

\begin{table}[H]	

	\caption[Table :4]{ When $m > n$}
	\centering
\scriptsize
\footnotesize\setlength{\tabcolsep}{4pt}
	\label{table4}
\scalebox{0.7}{
	\begin{tabular}{c c c c c c c c c c}
		\hline	$(m,n)$&$k(A)$&$k_{{\color{black} {X}}(c)}$&$k_{{\color{black} {X}}(cu)}$&$k_{{\color{black} {X}}(m)}$&$k_{{\color{black} {X}}(mu)}$&$k_{{\color{black} {Q}}(c)}$&$k_{{\color{black} {Q}}(cu)}$&$k_{{\color{black} {Q}}(m)}$&$k_{{\color{black} {Q}}(mu)}$ \\
		\hline\hline

(20,10) &12.4617	&1.0333e+03	    &3.5178e+03	    &3.7995	&7.6828	    &2.5326e+03	    &8.4145e+03	&9.5068 	&32.8103\\
(30,20) &29.7527	&2.1002e+03	    &8.8973e+03	    &6.7063	&17.3916	&3.2962e+03	    &1.1781e+04	&23.9260	&110.9770\\
(40,20) &20.2914	&1.3383e+03	    &2.2777e+03	    &4.5276	&14.7067	&1.8541e+05	    &6.8309e+05	&16.9065	&82.8431\\
(50,30) &39.3927	&1.5001e+04	    &3.0974e+04 	&8.7242	&27.2569	&4.7307e+03	    &2.4771e+04	&40.0854	&244.0194\\
(60,30) &24.9315	&6.0510e+03 	&1.0287e+04	    &7.4345	&20.2774	&7.1959e+04 	&1.8812e+05	&30.8075	&169.5142\\
(70,40) &31.5061	&2.4272e+04	    &1.3903e+05	    &7.6969	&31.0454	&9.5864e+04 	&5.0850e+05	&33.9196	&226.0615\\
(150,50) &26.5536	&2.3546e+04	    &7.8643e+04	    &6.8743	&20.5070	&3.8757e+05 	&2.1251e+06	&40.9936	&226.6025\\
(200,60) &31.4820	&2.7964e+05	    &9.0872e+05 	&7.1037	&24.6605	&1.4521e+05	    &7.5048e+05	&45.4397	&280.74841\\
(300,100) &44.2607	&8.0027e+04	    &3.1181e+05 	&9.1393	&35.7994	&4.3992e+05 	&2.6563e+06	&75.4188	&584.8364\\

		\hline
	\end{tabular}
}
\end{table}

\begin{example}
{\rm By using
\begin{align*}
  \scalebox{0.7}{$ B = [ \begin{array}{cccccccccc}
     10^{-e} & 1/2000 & 1/3000 & 1/4000 & 1/5000 &  1/6000 & 1/7000 & 1/8000 & 1/9000 & 1/10^{-e}
   \end{array} ] $}
\end{align*}
to generate an ill-conditioned, centrosymmetric matrix, we obtain the following results}
\end{example}

\begin{table}[H]	
	\caption[Table : 5]{When \textit{ m} $>$ \textit{n}}
	\centering
\scriptsize
\footnotesize\setlength{\tabcolsep}{2pt}
	\label{table5}
\scalebox{0.7}{
	\begin{tabular}{c c c c c c c c c c c}
		\hline
	$(m,n)$&$e$&$kA$&$k_{X
c}$&$k_{{\color{black} {X}}cu}$&$k_{{\color{black} {X}}m}$&$k_{{\color{black} {X}}mu}$ &$k_{{\color{black} {Q}}c}$&$k_{{\color{black} {Q}}cu}$&$k_{{\color{black} {Q}}m}$&$k_{{\color{black} {Q}}mu}$ \\
		\hline\hline
(5,4) & 1 &7.0000e+06	&6.5278	&201.0007	&1	&1.0000	&6.4874e+06	&1.1200e+08	&2.2500 &28.0001\\
(5,4) &0 &7.0998e+03    &3.8167e+03 &9.3636e+03 &1.0001 &3.5362 &1.7273e+04 &1.8611e+07 &1.2111e+03 &1.0566e+04\\
(5,4) &-1 &2.9207e+05	&27.4049 &5.1991e+03	&1.0000	&1.0000	&2.0220e+04	&2.0499e+08	&17.9881	&6.5063e+03\\
(5,4) &-4 &3.1500+08	&14.8602	&17.0743	&1	&1.0000	&2.0407+07	&2.9684+08	&10.8834	&11.5772\\
(5,4) &-3 &3.1500e+07	&14.8602	&58.8670	&1.0000	&1.0000	&2.0081e+06	&2.1883e+08	&10.8833	&69.4630\\

  \hline
	\end{tabular}
}
\end{table}

{\rm By using
\begin{align*}
  \scalebox{0.7}{$ B = [ \begin{array}{cccccccccc}
    1000 &1/2000 &1/3*10^{-e} &1/4000 &1/5000 &1/6000 &1/7*10^{-e} &1/8000 &1/9000 &1/1000
   \end{array} ] $}
\end{align*}
to generate an ill-conditioned, centrosymmetric matrix, we obtain the following results}

\begin{table}[H]	
	\caption[Table : 6]{When \textit{ m} $>$ \textit{n}}
	\centering
\scriptsize
\footnotesize\setlength{\tabcolsep}{2pt}
	\label{table6}
\scalebox{0.7}{
	\begin{tabular}{c c c c c c c c c c c}
		\hline
	$(m,n)$&$e$&$kA$&$k_{{\color{black} {X}}c}$&$k_{{\color{black} {X}}cu}$&$k_{{\color{black} {X}}m}$&$k_{{\color{black} {X}}mu}$ &$k_{{\color{black} {Q}}c}$&$k_{Q
cu}$&$k_{{\color{black} {Q}}m}$&$k_{{\color{black} {Q}}mu}$ \\
		\hline\hline

(5,4)&-4   &4.8189	 &34.9958   &75.4366	&1.4221	&4.6545	&2.8171e+06	&2.2240e+06	&2.0238	&7.2148\\
(5,4)& 4   &1.0328e+07 &5.4705	&11.7500	&1.0000	&1.0000	&2.8870e+06	&1.0278e+08	&2.5922	&29.8836\\
(5,4)& 3   &3.1500e+07 &14.8602	&58.8670	&1.0000	&1.0000	&2.0081e+06	&2.1883e+08	&10.8833 &69.4630\\

 \hline
	\end{tabular}
}
\end{table}

and
\begin{align*}
  \scalebox{0.5}{$ B = [ \begin{array}{cccccccccccccccccc}
    10^{-e} &1/2000 &1/3000 &1/4000 &1/5000 &1/6000 &1/7000 &1/8000 &1/9000 &1/10^{-e} &1/4000 &1/5000 &1/6000 &1/7*10^{-e} &1/8000 &1/9000 &1/1000 &1/3*10^{-e}
   \end{array} ] $}
\end{align*}  for $m = n$

\begin{table}[H]	
	\caption[Table : 6]{When $m$ = $n$}
	\centering
\scriptsize
\footnotesize\setlength{\tabcolsep}{2pt}
	\label{table7}
\scalebox{0.7}{
	\begin{tabular}{c c c c c c c c c c c c c}
		\hline
	$(m,n)$&$e$&$kA$&$k_{{\color{black} {X}}c}$&$k_{{\color{black} {X}}cu}$&$k_{{\color{black} {X}}m}$&$k_{{\color{black} {X}}mu}$ &$k_{{\color{black} {Q}}c}$&$k_{{\color{black} {Q}}cu}$&$k_{{\color{black} {Q}}m}$&$k_{{\color{black} {Q}}mu}$ \\
		\hline \hline

6 & 2   &2.0006e+05	 &20.6675	&119.6456	&2.2833	&5.5034	&41.8555	&197.6872	&1.9220	&10.3245\\
6 & 3   &1.7890e+06  &109.3191	&151.2719	&2.8785	&4.5011	&77.6157 	&274.4370	&3.6545	&10.4410\\
6 & 4   &4.0475e+08  &18.8116	&365.2161	&2.2138	&5.6320	&250.0004	&1023.3962	&5.1950	&32.5212\\
6 & 5   &2.1501e+09  &21.2200	&39.8968	&4.0760	&7.9269	&230.1613	&588.5895	&4.1830	&10.9114\\

 \hline
	\end{tabular}
}
\end{table}

\section{Conclusion}\label{sec.5}

This study has thoroughly explored the rigorous perturbation analysis of the ${\color {black}{QX}}$ decomposition for centrosymmetric matrices, addressing both norm-wise and component-wise perturbations of the original matrix. Weak rigorous perturbation bounds were derived using the refined matrix equation approach, while strong rigorous perturbation bounds were obtained by combining the modified matrix-vector equation technique, the Lyapunov majorant function, and the Banach fixed-point theorem. Additionally, we provided explicit expressions for the mixed and component-wise condition numbers, along with their upper bounds. The numerical results confirm the validity of these findings. Future work will extend this analysis by exploring the multiplicative perturbation bounds for the ${\color {black}{QX}}$ decomposition.

\section*{Acknowledgment}
The second author is supported by the project TUBITAK 1001, 123F356. We are grateful to the anonymous referees and the editor, for their valuable comments and suggestions, which have significantly improved the manuscript.
\bibliographystyle{unsrt}
\bibliography{references}

\begin{thebibliography}{10}

\bibitem{burnik2015structure}
Konrad Burnik.
\newblock A structure-preserving {$QR$} factorization for centrosymmetric real
  matrices.
\newblock {\em Linear Algebra and its Applications}, 484:356--378, 2015.

\bibitem{steele2018qx}
A~Steele, J~Yalim, and Bruno Welfert.
\newblock {$ QX $} factorization of centrosymmetric matrices.
\newblock {\em Applied Numerical Mathematics}, 134:11--16, 2018.

\bibitem{lv2022perturbation}
Peng Lv and Bing Zheng.
\newblock Perturbation analysis for the {$QX$} factorization for
  centrosymmetric matrices.
\newblock {\em Linear and Multilinear Algebra}, 70(3):557--580, 2022.

\bibitem{iosifescu2014finite}
Marius Iosifescu.
\newblock {\em Finite Markov processes and their applications}.
\newblock Courier Corporation, 2014.

\bibitem{daubechies1992ten}
Ingrid Daubechies.
\newblock {\em Ten lectures on wavelets}.
\newblock SIAM, 1992.

\bibitem{datta1986some}
Lokesh Datta and S~Morgera.
\newblock Some results on matrix symmetries and a pattern recognition
  application.
\newblock {\em IEEE transactions on Acoustics, Speech, and Signal Processing},
  34(4):992--994, 1986.

\bibitem{hanna2003centrosymmetric}
Magdy~Tawfik Hanna and Sana~Ahmed Mansoori.
\newblock A centrosymmetric matrix based technique for the interpolation of a
  hermitian signal.
\newblock {\em Numerical Linear Algebra with Applications}, 10(8):701--720,
  2003.

\bibitem{datta1989reducibility}
Lokesh Datta and Salvatore~D Morgera.
\newblock On the reducibility of centrosymmetric matrices—applications in
  engineering problems.
\newblock {\em Circuits, Systems and Signal Processing}, 8(1):71--96, 1989.

\bibitem{li1999computationally}
Hongbin Li, Petre Stoica, and Jian Li.
\newblock Computationally efficient maximum likelihood estimation of structured
  covariance matrices.
\newblock {\em IEEE Transactions on Signal Processing}, 47(5):1314--1323, 1999.

\bibitem{mackey2005structure}
D~Mackey, Niloufer Mackey, and Daniel Dunlavy.
\newblock Structure preserving algorithms for perplectic eigenproblems.
\newblock {\em The Electronic Journal of Linear Algebra}, 13:10--39, 2005.

\bibitem{andrew1998classroom}
Alan~L Andrew.
\newblock Classroom note: Centrosymmetric matrices.
\newblock {\em SIAM Review}, 40(3):697--698, 1998.

\bibitem{weaver1985centrosymmetric}
James~R Weaver.
\newblock Centrosymmetric (cross-symmetric) matrices, their basic properties,
  eigenvalues, and eigenvectors.
\newblock {\em The American Mathematical Monthly}, 92(10):711--717, 1985.

\bibitem{li2011numerical}
Jiao-fen Li, Xi-yan Hu, and Lei Zhang.
\newblock Numerical solutions of axb= c for centrosymmetric matrix x under a
  specified submatrix constraint.
\newblock {\em Numerical Linear Algebra with Applications}, 18(5):857--873,
  2011.

\bibitem{bai2005inverse}
Zheng-Jian Bai.
\newblock The inverse eigenproblem of centrosymmetric matrices with a submatrix
  constraint and its approximation.
\newblock {\em SIAM Journal on Matrix Analysis and Applications},
  26(4):1100--1114, 2005.

\bibitem{zhou2003least}
Fu-Zhao Zhou, Lei Zhang, and Xi-Yan Hu.
\newblock Least-square solutions for inverse problems of centrosymmetric
  matrices.
\newblock {\em Computers \& Mathematics with Applications},
  45(10-11):1581--1589, 2003.

\bibitem{chang1997pertubation}
Xiao-Wen Chang.
\newblock {\em Pertubation Analysis of Some Matrix Factorizations}.
\newblock McGill University Montreal, 1997.

\bibitem{li2016new}
Hanyu Li, Yimin Wei, and Yanfei Yang.
\newblock New rigorous perturbation bounds for the cholesky-like factorization
  of skew-symmetric matrix.
\newblock {\em Linear Algebra and its Applications}, 491:83--100, 2016.

\bibitem{li2017note}
Hanyu Li and Peng Lv.
\newblock A note on the sensitivity analysis for the symplectic {$QR$}
  factorization.
\newblock {\em Math. Inequal. Appl}, 20(2):427--439, 2017.

\bibitem{li2015improved}
Hanyu Li and Yimin Wei.
\newblock Improved rigorous perturbation bounds for the {$LU$} and {$QR$}
  factorizations.
\newblock {\em Numerical Linear Algebra with Applications}, 22(6):1115--1130,
  2015.

\bibitem{xie2013condition}
Ze-Jia Xie, Wen Li, and Xiao-Qing Jin.
\newblock On condition numbers for the canonical generalized polar
  decomposition of real matrices.
\newblock {\em The Electronic Journal of Linear Algebra}, 26:842--857, 2013.

\bibitem{cucker2007mixed}
Felipe Cucker, Huaian Diao, and Yimin Wei.
\newblock On mixed and componentwise condition numbers for moore--penrose
  inverse and linear least squares problems.
\newblock {\em Mathematics of Computation}, 76(258):947--963, 2007.

\bibitem{H1}
Hanyu Li and Yimin Wei.
\newblock Improved rigorous perturbation bounds for the {$LU$} and {$QR$}
  factorizations.
\newblock {\em Numerical Linear Algebra with Applications}, 22(6):1115--1130,
  2015.

\bibitem{H2}
Hanyu Li, Yimin Wei, and Yanfei Yang.
\newblock New rigorous perturbation bounds for the cholesky-like factorization
  of skew-symmetric matrix.
\newblock {\em Linear Algebra and its Applications}, 491:83--100, 2016.

\bibitem{H3}
Aamir Farooq, Mahvish Samar, Hanyu Li, and Chunlai Mu.
\newblock Sensitivity analysis for the block cholesky downdating problem.
\newblock {\em International Journal of Computer Mathematics},
  97(6):1234--1253, 2020.

\bibitem{konstantinov2003perturbation}
Mihail Konstantinov, D~Wei Gu, Volker Mehrmann, and Petko Petkov.
\newblock {\em Perturbation theory for matrix equations}.
\newblock Gulf Professional Publishing, 2003.

\bibitem{higham1994survey}
Nicholas~J Higham.
\newblock {\em A survey of componentwise perturbation theory}, volume~48.
\newblock American Mathematical Society, 1994.

\bibitem{samar2021refined}
Mahvish Samar and Aamir Farooq.
\newblock Refined rigorous perturbation bounds for the {$SR$} decomposition.
\newblock {\em Applied Mathematics-A Journal of Chinese Universities},
  36(4):537--553, 2021.

\bibitem{xie2015sensitivity}
Ze-jia Xie and Wen Li.
\newblock Sensitivity analysis for the {$SR$} decomposition.
\newblock {\em Linear and Multilinear Algebra}, 63(2):222--234, 2015.

\bibitem{li2014new}
Hanyu Li and Yimin Wei.
\newblock New rigorous perturbation bounds for the {$LU$} and {$QR$}
  factorizations.
\newblock {\em arXiv preprint arXiv:1405.0179}, 2014.

\end{thebibliography}
\end{document}